\numberwithin{equation}{section}
\theoremstyle{plain}
\newtheorem{theorem}{Theorem}[section]
\newtheorem{lemma}[theorem]{Lemma}
\newtheorem{proposition}[theorem]{Proposition}
\theoremstyle{definition}
\newtheorem{definition}[theorem]{Definition}
\newtheorem{remark}[theorem]{Remark}
\newcommand{\Reg}{\mathfrak{C}}
\newcommand{\regu}[1][]{{\ell #1,\beta}}
\newcommand{\F}{\mathcal{F}}
\newcommand{\N}{\mathbb{N}}
\newcommand{\R}{\mathbb{R}}
\newcommand{\Rd}{\R^d}
\newcommand{\ffi}{\varphi}
\newcommand{\e}{\varepsilon}
\newcommand{\ud}{\,\mathrm{d}}
\def\H{\mathscr{H}}
\def\P{\mathcal{P}}
\def\hh{\mathrm{H}}
\def\D{\mathrm{D}}
\def\kk{\mathcal{K}}
\def\J{\mathcal{J}}
\def\oc{\overline{c}}
\def\uc{\underline{c}}
\newcommand{\RT}{\R^d\times [0,+\infty)}
\newcommand{\dia}{\operatorname{\text{diam}}}
\newcommand{\osp}{\omega^{\mathrm {sp}}}
\newcommand{\oti}{\omega^{\mathrm {ti}}}
\newcommand{\ep}{\varepsilon}
\newcommand{\inn}{\mathrm{in}}
\newcommand{\ou}{\mathrm{ou}}
\newcommand{\rr}{\mathcal{R}}
\title[Stability of nonlocal curvature flows] {Stability results for nonlocal geometric evolutions and limit cases for fractional mean curvature flows}
\author[A. Cesaroni]
{A. Cesaroni}
\address[Annalisa Cesaroni]
{Dipartimento di Scienze Statistiche, Universit\`a di Padova, Via Cesare Battisti 241/243, I-35121 Padova, Italy}
\email{annalisa.cesaroni@unipd.it}
\author[L. De Luca]
{L. De Luca}
\address[Lucia De Luca]{IAC-CNR, Via dei Taurini 19, I-00184 Roma, Italy}
\email[L. De Luca]{lucia.deluca@cnr.it}
\author[M. Novaga]
{M. Novaga}
\address[Matteo Novaga]{Dipartimento di Matematica, Universit\`a di Pisa, Largo Bruno Pontecorvo 5, I-56127 Pisa, Italy}
\email[M. Novaga]{matteo.novaga@unipi.it}
\author[M. Ponsiglione]
{M. Ponsiglione}
\address[Marcello Ponsiglione]{Dipartimento di Matematica ``Guido Castelnuovo'', Sapienza Universit\`a di Roma, Piazzale Aldo Moro 2, I-00185 Roma, Italy
}
\email[M. Ponsiglione]{ponsigli@mat.uniroma1.it}
\begin{document}

\begin{abstract}
We introduce a notion of uniform convergence for local and nonlocal curvatures. 
Then, we propose an abstract method to prove the 
convergence of the corresponding geometric flows, within the  level set formulation. We  apply such a general theory to characterize the limits of $s$-fractional mean curvature flows as $s\to 0^+$ and $s\to 1^-$.  In analogy with the $s$-fractional mean curvature flows,  we introduce the notion of $s$-Riesz curvature flows and characterize its limit as $s\to 0^-$.  Eventually, we discuss the limit behavior as $r\to 0^+$ of the flow generated by a regularization of the $r$-Minkowski content. 
\vskip5pt
\noindent
\textsc{Keywords}: Local and nonlocal geometric evolutions; Viscosity solutions; Level set formulation; Fractional perimeter;
Fractional mean curvature flow;  Riesz energy; Minkowski content. 
\vskip5pt
\noindent
\textsc{AMS subject classifications:}  
53C44   %	Geometric evolution equations
35D40   %Viscosity solutions to PDEs
35K93   	%Quasilinear parabolic equations with mean curvature operator
35R11   %	Fractional partial differential equations
\end{abstract}

\maketitle

%%%%%%%%%%%%%%%%%%%%%%%%%%%%%%%%%%%%%%
%%%%%%%%%%%%%%%%%%%%%%%%%%%%%%%%%%%%%%
%%%%%%%%%%%%%%%%%%%%%%%%%%%%%%%%%%%%%%
 \tableofcontents

\section{Introduction} In recent years, several examples of geometric flows
arising in different models 
from Physics, Biology and Materials science, have been considered and studied.
Without trying  to be exhaustive,
some relevant examples of these evolutions are the classical mean curvature flow, together with its anisotropic versions
%, that is evolutions of sets where the velocity of the boundary is given in term of a  curvature functional, 
%depending on the (local or global) shape of the set itself,  
%We recall here briefly, without even trying  to be exhaustive,  
 (see \cite{man} and the references therein),
the fractional mean curvature flow  \cite{i, jlm, cdnv, cnr},  nonlocal evolutions driven by suitable  
interaction kernels  emerging  as models for dislocation dynamics in 
%the description of metallic 
crystals  \cite{c, dlfm}.  
%the nonlocal curvature flows  studied  in the context of  image segmentation  (see for instance \cite{CMP0}).  

It is well known that geometric evolutions may exhibit  singularities even starting from smooth initial data, so that, together with classical solutions, much effort has been devoted to 
develop weak formulations taking into account topology changes and providing global solutions. 

Here we focus on the so-called {\it level set} formulation  \cite{OS, ES, CGG}. Specifically,  
 we follow the approach in \cite{cmp}, which provides a unified  framework to deal with a  wide class of local and nonlocal 
 %translation-invariant 
geometric  flows. 
%enjoying translation invariance and comparison principle. 
There, the authors  introduced a class of generalized  curvatures $\H$ defined on pairs $(x,E)$ where $E$ is a smooth set and $x\in\partial E$,    satisfying suitable structural assumptions, accounting continuity properties, monotonicity and translational invariance.  For such a class of curvatures, the authors   provided global existence and uniqueness of
generalized geometric evolutions of sets with compact boundary, i.e., of continuous viscosity  solutions of the 
corresponding level set equation  
\begin{equation}
\label{lintro}
\begin{cases}
 \partial_t u(x,t) + |\D u(x,t)| \H(x,  \{ y: \, u(y,t)\ge u(x,t)\}) \,=\,0,\\ 
 u(\cdot,0) = u_0(\cdot)\,,
\end{cases}
\end{equation} 
where   $u_0:\R^d\to\R$ is a continuous function, which is constant outside a compact set.  
We shall revisit this theory in Section \ref{seclevel}. 

In this paper we propose an abstract method to prove stability of geometric flows with respect to curvature variations,    within the same general framework introduced  in \cite{cmp}.  
We present the method  in Section \ref{general}.  First of all, in Definition \ref{curvconver}, we introduce a notion of uniform convergence of a family $\{\H^n\}_{n\in\N}$ of  curvatures to a limit curvature  $\H^\infty$, where $\H^n, \H^\infty$ are generalized curvatures in the sense of \cite{cmp}.   
By enforcing  uniform bounds on the  velocity of evolving balls, we recover  uniform continuity estimates of the corresponding level set solutions, which provide  the desired compactness properties for the viscosity solutions $u^n$ to \eqref{lintro} with $\H$ replaced by $\H^n$. Moreover, using the uniform convergence of the curvatures, and the appropriate notion of test functions for \eqref{lintro}, suitable to deal with the singularities of the operators at points where  $\D u=0$, we
recover the uniform convergence of the differential operators appearing in  \eqref{lintro}; we conclude that the (whole) sequence $\{u^n\}_{n\in\N}$  locally uniformly converges to the unique solution $u^\infty$ to \eqref{lintro} with $\H=\H^\infty$. 
%we conclude  that the solutions $u^n$ to \eqref{lintro} with $\H=\H^n$ converge locally uniformly to the solution $u^\infty$ with $\H=\H^\infty$. 
This is the main abstract tool of the paper, precisely stated  in Theorem \ref{genthm}.
 
We apply such a result to characterize the limit cases of several parametrized families of geometric flows.  
%studying the limit cases corresponding to the parameters approaching the boundary of their natural range of definition. 
First, 
in Section  \ref{fraccurv} we analyze the asymptotics of the (reparametrized in time) $s$-fractional mean curvature flow,
 %geometric flows driven by fractional curvatures both 
as  $s\to 1$ and $s\to 0$. The limit case $s\to 1$ is actually well understood: as pointed out in \cite{i}, suitably reparametrized in time solutions of the $s$-fractional mean curvature flow converge to the classical mean curvature flow. The case $s\to 0$ is, to our knowledge, completely new and, in some respects, more intriguing.  
Indeed,  
 the $s$-fractional perimeters, multiplied by $s$, converge (up to a prefactor) to the Lebesgue measure  \cite{ms, dfpv}, suggesting, at a first glance, that the corresponding reparametrized  flows converge to a trivial geometric evolution where  sets move  with constant normal velocity. Such a trivial motion reflects the degeneracy of the limit behaviour of the rescaled fractional perimeters as $s\to 0$. Our methods  provide a rigorous proof of these facts. 

Furthermore, very recently the next order expansion of $s$-fractional perimeters as $s\to 0$  has been obtained in \cite{dnp} in terms of $\Gamma$-convergence  (see \cite{cn} for the limit $s\to 1$). The limit perimeter, referred to as {\it $0$-fractional perimeter}, is now much less degenerate, enjoying for instance the fractional isoperimetric inequality, which establishes that balls are the only minimizers of  the $0$-fractional perimeter under a volume constraint.
 In this paper we compute the first variation of such a $0$-fractional perimeter, referred to as {\it $0$-fractional curvature} (see Definition \ref{zerocurv}),
 we prove that the $0$-fractional curvature is a generalized curvature in the sense of \cite{cmp}, so that the general results about existence and uniqueness of the level set flow obtained in \cite{cmp} apply. Remarkably, we show that this is the relevant object to describe the limit of $s$-fractional mean curvature flows as $s\to 0$. 
In fact, we first  show that the next  order asymptotics of the $s$-fractional  curvature as $s\to 0$ is exactly the $0$-fractional curvature; then, as a byproduct of our general stability result, we prove that solutions of suitably forced $s$-fractional mean curvature flows  converge to the solution of the $0$-fractional mean curvature flow. 
Here the appropriate forcing term is nothing but $1/s$ times the constant normal velocity appearing in the leading term of the Taylor expansion of the $s$-fractional curvature.

Our method is, as a matter of fact, very robust, and applies to several different contexts. To illustrate that, 
in analogy with the $s$-fractional  curvature,  we  introduce in Section  \ref{secriesz} the notion of  {\it $s$-Riesz curvature} $\mathcal{K}^s$, for $s<0$, as the first variation of  Riesz-type energies, see \eqref{rieszvar}.
We show that $\mathcal{K}^s$  is a generalized curvature in the  sense of \cite{cmp}, and hence the  geometric flow driven by $\mathcal{K}^s$  is globally well-defined; moreover, we study the first and second order limits of the rescaled flow as $s\to 0$, obtaining the same limit flows 
%obtained by looking at the first and second order limits as $s\to 0$ 
of the rescaled  $s$-fractional curvature flow.

Finally, in  Section \ref{secmin}  we consider the flow  generated by a suitable regularization of the $r$-Minkowski content, which is the measure of the $r$-neighborhood of the boundary of the set, divided by $2r$, see\eqref{defmink}. 
The $r$-Minkowski content has been introduced in \cite{b} in the context of  image denoising and then applied to vessel segmentation \cite{z}.
% in order  to preserve small scale details (or small oscillations of the boundary) and  regularize the image at large scales. 
In order to get well-posedness of the corresponding geometric flow, in \cite{cmp,CMP0} the authors introduced a regularized version of the $r$-Minkowski content, see \eqref{varmr} for a precise definition, whose first variation is a generalized curvature  in the sense of \cite{cmp}. We  prove that the limit of such a geometric flow as $r\to 0$ is the classical mean curvature flow.
%study here the asymptotic limit of such a geometric flow as $r\to 0$, showing the convergence to the classical mean curvature flow.
\vskip2pt
As already mentioned, the problem of convergence of rescaled nonlocal curvature flows  to local 
(isotropic and anisotropic) mean curvature flows has already been faced in the literature; see for instance \cite{i, dlfm, cp}.  One of the merits of our abstract approach is the capability to detect the asymptotic behavior of geometric evolutions even when the limit itself has a nonlocal character; this is actually the case of the $0$-fractional mean curvature flow.
%We observe that  some  related results on 
%%the asymptotic behavior of nonlocal geometric flows are already  present  in the literature, in particular 
%the convergence of rescaled nonlocal curvature flows  to anisotropic mean curvature flows are already present in literature, see for instance \cite{i, dlfm, cp}. 
%In \cite{cp}, the convergence of the geometric flows has been obtained  by means of the theory of 

It would be interesting to 
%extend the applications 
apply our general method to
 recover also the results in \cite{dlfm,cp}, 
and to extend our formalism to other weak notion  of solutions; 
 one could consider for instance the method of geometric barriers by De Giorgi \cite{dg} (adopted in \cite{cp}),  
which turns out to be equivalent in many cases to the level set formulation, at least for local evolutions (see \cite{bn1}).

% starting from the   method of geometric barriers introduced by De Giorgi in \cite{dg} and adopted in \cite{cp},  
%which turns out to be equivalent in many cases to the level set formulation, at least for local evolutions (see \cite{bn1}). 
Finally, our method could be exploited to study limit cases of many other  geometric evolutions. As a relevant example, we mention the limit of geometric flows driven by the $p$-capacity as $p\to 1$. Incidentally,  the viscosity approach proposed in this paper could turn out to be convenient also  to deal  with non-geometric equations such as nonlinear versions of $s$-fractional heat equations.

%%%%%%%%%%%%%%%%%%%%%%%%%%
%%%%%%%%%%%%%%%%%%%%%%%%%%
%%%%%%%%%%%%%%%%%%%%%%%%%%
\vskip5pt
\noindent

%%%%%%%%%%%%%%%%%%%%%%%%%%%%%%%%%%%%%%%%
{\bf Acknowledgments:}  The authors are members of the Gruppo Nazionale per l'Analisi Matematica, la Probabilit\`a e le loro Applicazioni (GNAMPA) of the Istituto Nazionale di Alta Matematica (INdAM).
%%%%%%%%%%%%%%%%%%%%%%%%%%
%%%%%%%%%%%%%%%%%%%%%%%%%%
%%%%%%%%%%%%%%%%%%%%%%%%%%

%%%%%%%%%%%%%%%%%%%%%%%%%%
%%%%%%%%%%%%%%%%%%%%%%%%%%
%%%%%%%%%%%%%%%%%%%%%%%%%%
\section{Level set formulation}\label{seclevel} 
%%%%%%%%%%%%%%%%%%%%%%%%%%
%%%%%%%%%%%%%%%%%%%%%%%%%%
%%%%%%%%%%%%%%%%%%%%%%%%%%
In this section we revisit the level set formulation of generalized nonlocal curvature flows introduced in \cite{cmp}.
We start by providing the appropriate notion of nonlocal curvature.

%We will say that a function $\omega:[0,+\infty)\to[0,+\infty)$ is a modulus of continuity if it is strictly increasing and continuous in $[0,R)$ for some $R>0$ and satisfies $\omega(0)=0$.

\subsection{Axioms of nonlocal curvature}\label{axioms}

Let $\Reg$ be the class of  subsets of $\R^d$, which can be obtained as the closure of an open set with  compact $C^{\regu}$ boundary.
% and  let $\Ins$ be the class of all measurable subsets of $\R^d$. 
Throughout the paper
$\ell\ge 2$ and $\beta\in [0,1]$ will be fixed; the reader may simply
assume $\ell=2$, $\beta=0$.

We will deal with ``curvature''  functions $x\mapsto \H(x,E)\in\R$ defined for $E\in\Reg$ and $x\in\partial E$. 
%This curvature will satisfy the following axioms:

\begin{definition}\label{defcu}
We say that $\H$ is a nonlocal curvature if it satisfies the standing assumptions (M), (T)  and (C) below.
\begin{itemize}
\item[(M)] Monotonicity: If 
$E, F\in\Reg$ with $E\subseteq F$, and if $x\in \partial F\cap \partial E$,
then $\H(x,F)\le\H(x,E)$;
\item[(T)] Translational invariance: for any $E\in\Reg$, $x\in\partial E$,
$y\in\R^d$, $\H(x,E)=\H(x+y,E+y)$;
\item[(C)] Continuity:
If $\{E_n\}_{n\in\N}\subset\Reg$, $E\in\Reg$, and 
$E_n\to E$ in $\Reg$, then $\H(x,E_n)\to \H(x,E)$ for every $x\in\partial E_n\cap\partial E$.
\end{itemize}
\end{definition}

{
Here and throughout the paper, by  $E_n\to E$ in $\Reg$ we mean
that there exists a sequence of diffeomorphisms
$\{\Phi_n \}_{n\in\N}$ converging to the identity in $C^{\ell,\beta}$, with  $E_n=\Phi_n (E)$.
}

By assumption (C), for any $\rho>0$ we can define the quantities
\begin{equation}\label{defbarc0}
\oc(\rho) \ :=\ 
\max_{x\in\partial B_\rho} \max\{\H(x,\overline B_\rho),-\H(x,\R^d\setminus B_\rho)\}\,,
\end{equation}
\begin{equation}\label{defbarc}
\uc(\rho) \ :=\ 
\min_{x\in\partial B_\rho} \min\{\H(x, \overline B_\rho),-\H(x,\R^d\setminus B_\rho)\}\,,
\end{equation}
which are continuous functions of $\rho>0$.
Observe that thanks
to the monotonicity axiom (M), 
the functions $\rho\mapsto\oc(\rho)$
and $\rho\mapsto\uc(\rho)$ are nonincreasing.

In \cite{cmp} (see Theorem \ref{exun} below) it has been proved that the assumptions (M), (T), (C) above are enough to guarantee the existence of a viscosity level set solution of the nonlocal $\H$-curvature flow;
as in \cite{cmp}, a stronger property than (C) will be required in order to have uniqueness for such a solution. Such a property is the  following:
\begin{itemize}
\item[(C')] Uniform continuity: Given $\rr>0$, there exists a modulus of continuity $\omega_\rr$ such that the following holds.
For all $E\in\Reg$, $x\in\partial E$, such that $E$ has both an interior
and exterior ball condition of radius $\rr$ at $x$,
 and for all diffeomorphisms $\Phi:\R^d\to \R^d$   of class {$C^{\regu}$},
with $\Phi(y)=y$ for $|y-x|\ge 1$, we have  
$$
|\H(x,E) - \H(\Phi(x),\Phi(E))|\le \omega_\rr(\|\Phi - \mathrm{Id}\|_{C^{\regu}}).
$$
\end{itemize}
%(We can observe that if $\Phi$ is a translation out of
%$B_1(x)$, then the estimate still holds by translational invariance
%of the curvature.)

Assumption (B) below will guarantee that the curvature flow starting from
a bounded set remains bounded at all times, yielding existence of global solutions.
\begin{itemize}
\item[(B)] Lower bound on   the  curvature of the balls: 
There exists $K>0$ such that
\begin{equation}\label{lwrbdkappa}
\uc(\rho)\ \geq \ -K \rho   \qquad \text{ for all  } \rho \ge 1.
\end{equation}
\end{itemize}

Such an assumption is useful when dealing with a sequence of curvatures, since it guarantees that the corresponding  geometric evolutions are all defined   on the whole $[0,+\infty)$, 
and hence, in particular, on the same time interval.  Moreover, the assumption (B) could be weakened in several ways with minor changes in the proofs,  by requiring 
for instance  $\uc(\rho)\ \geq \ -K \rho + C$ for some (possibly negative) $C\in\R$. 

The following symmetry condition guarantees that geometric evolutions are preserved passing to the complementary sets, and it will play a role in proving (uniform in) time continuity estimates.  

\begin{itemize}
\item[(S)] Symmetry: 
For all $E\in\Reg$ and for every $x\in\partial E$, it holds $\H(x,E)= -\H(x,\R^d\setminus \overset{\circ}{E})$, where $\overset{\circ}{E}$ denotes the interior of $E$. 
\end{itemize}

%Without assuming  D)
%most of the results in this paper  remain true, except
%that the flow starting from a given set { with compact boundary}  will be defined possibly
%up to some time $T^*<+\infty$ where { its boundary } becomes unbounded
%and the framework of this paper cannot be applied anymore. The
%time $T^*$ can be estimated from $\uc(\rho)$. 
%%%%%%%%%%%%%%%%%%%%%%%%%%
%%%%%%%%%%%%%%%%%%%%%%%%%%
%%%%%%%%%%%%%%%%%%%%%%%%%%
\subsection{Notion of viscosity solution}
Let $\H$ be a nonlocal curvature in the sense of Definition \ref{defcu}. 
Given a continuous function $u_0:\R^d\to\R$, constant out of a compact set, we provide the proper notion of solution to the following parabolic
Cauchy problem 
\begin{equation}
\label{levelsetf}
\begin{cases}
 \partial_t u(x,t) + |\D u(x,t)| \H(x,  \{ y: \, u(y,t)\ge u(x,t)\}) \,=\,0 \\ 
 u(\cdot,0) = u_0(\cdot).
\end{cases}
\end{equation}
%in the viscosity sense. 
Here and  in the following, $\D$ and $\D^2$ stand for the spatial gradient and the spatial Hessian matrix, respectively.
Notice that if the superlevel sets of $u$ are not smooth, the meaning of \eqref{levelsetf} is unclear. For this reason, it is necessary to use a definition based on appropriate smooth test functions whose level sets  curvatures  are
well defined. Following \cite{cmp} we will adopt the appropriate framework of  
viscosity solutions.
%
%It will involve 
%suitable lower and upper semicontinuous extensions of $\kappa$ on non 
%smooth sets which we will introduce in the next sections.
We start by introducing the class of admissible test functions.

Let $\gamma:(0,+\infty)\to\R$  be a nonincreasing  continuous function with $\gamma \ge \oc$, where $\oc(\rho)$ is the function introduced in \eqref{defbarc0}.
Let $\F$ be the family of functions {$f\in C^\infty([0,+ \infty))$} such that $f(0) = f'(0)= f''(0)=0$, $f''(r) >0$ for all $r$ in a neighborhood of $0$, $f$ is constant in $[M, +\infty)$ for some $M>0$ (depending on $f$), and 

\begin{equation}\label{IS}
\lim_{\rho\to 0^+} f'(\rho)\, \gamma(\rho) = 0.
\end{equation}

In  \cite[p. 229]{is} it has been proven that the family $\F$ is not empty.
%{one can easily show that also $\F\cap C^{\infty}([0,+\infty))$
%is not empty}. 
Note that \eqref{IS} implies
\begin{equation}\label{ISvera}
\lim_{\rho\to 0^+} f'(\rho)\,  \gamma(f^{-1}(\rho)) = 0,
\end{equation}
since $f^{-1}(\rho)>\rho$ for small values of $\rho$ and $\gamma$ is nonincreasing.

%\begin{equation}\label{IS}
%\lim_{\rho\to 0^+} f'(\rho)\, \overline c(\rho) = 0,
%\end{equation}
%where $\overline c(\rho)$ is the function introduced in \eqref{defbarc0}.
%In  \cite[p. 229]{IS} it has been proven that the family $\F$ is not empty.
%%{one can easily show that also $\F\cap C^{\infty}([0,+\infty))$
%%is not empty}. 
%Note that \eqref{IS} {(recall also \eqref{lwrbdkappa})} implies
%\begin{equation}\label{ISvera}
%\lim_{\rho\to 0^+} f'(\rho)\, \overline c(f^{-1}(\rho)) = 0,
%\end{equation}
%since $f^{-1}(\rho)>\rho$ for small values of $\rho$ and $\overline c$ is decreasing.

%{We fix $T>0$ and look for geometric evolutions in the time interval $[0,+\infty)$.  { Since we will consider the evolution of sets with compact boundaries, it is convenient to describe such  evolving sets as level sets of functions that are spatially constant outside a compact set.  For technical reasons it will be convenient to consider test functions that are also spatially constant outside a compact set, but with such a constant value  possibly depending on time. }
%As in \cite{cmp}, in  the following, 
With a small abuse of language, we will say that a function $g:\R^d\times A\to \R$, with $A\subseteq [0, +\infty)$, is constant outside a compact set  if for all $t \in A$ and for all $t'\in A\cap [0,t)$ we have
$g(\cdot,t')\equiv C_{t'}$ on $\R^d\setminus \mathcal K_{t}$ for some $C_{t'}\in\R$ and some compact set $\mathcal K_t\subset \R^d$.
%some there exists a compact set $\mathcal K\subset \R^d$ such that $g(\cdot,t)$ is constant in $ (\R^d\setminus \mathcal K)$ for every $t\in  A$ (with the constant possibly depending on $t$).

\begin{definition}\label{defadmissible}
Let $\hat z=(\hat x,\hat t)\in  \R^d\times (0, +\infty)$ and let $A\subset (0, +\infty)$ be any open bounded interval containing $\hat t$.   We will say that $\ffi\in C^{0}( \R^d\times \overline A)$ is
{\em admissible at the point $\hat z=(\hat x,\hat t)$} if it is 
of class $C^2$ in a neighborhood of $\hat z$, 
if it is constant out of a compact set, 
  and,
in case $\D\ffi(\hat z)=0$, the following holds: 
there exists   $f\in\F$  and { $\omega\in  C^\infty([0,+\infty))$ with
 $\omega(0)= \omega'(0)=0$, $\omega(r)>0$ for $r\neq 0$}
such that
$$
|\ffi(x,t) - \ffi(\hat z) - \ffi_t(\hat z)(t-\hat t)|\le f(|x-\hat x|) + \omega(|t -\hat t|)
$$
for all $(x,t)$ in $\R^d\times A$.
\end{definition}

In the following we will say that a level of a smooth function $\ffi$ is noncritical if $\D \ffi$  does not vanish on  such a level.   
We are now ready to provide the definition of viscosity sub and supersolution as in \cite{cmp}. 
% The literature contains a multiplicity of formulations based on larger or smaller families of test functions. Ours and the equivalent definitions we will provide in 
%Subsection~\ref{subsec:viscoequiv} are  variants of what can be found in {a} non-local setting in  \cite{BarlesImbert} or \cite{S}. }
{
\begin{definition}\label{defviscoC2}
An upper semicontinuous function $u:\R^d\times [0,+\infty)\to \R$ (in short $u\in USC(\R^d\times [0,+\infty))$),
constant outside a compact set, is
a viscosity subsolution of the Cauchy problem \eqref{levelsetf} if $u(\cdot,0) \le u_0(\cdot)$ and for all 
$z:= (x,t) \in \R^d\times (0,+\infty)$ and  all $C^\infty$-test functions $\ffi$
such that $\ffi$ is admissible at $z$ and $u-\ffi$ has a
maximum at $z$ (in the domain of definition of $\ffi$)  the following holds:
\begin{itemize}
\item[(i)] If $\D\ffi(z) = 0 $, then $\ffi_t(z) \le 0$;
\item[(ii)]  If the level set $\{\ffi(\cdot,t) = \ffi(z)\}$ is noncritical, then 
$$\ffi_t(z)+ |\D\ffi(z)| \, \H\left(x,\{y: \ffi(y,t)\ge \ffi(z)\}\right)
\le 0.
$$ 
\end{itemize}
A lower semicontinuous function $u$ (in short $u\in LSC(\R^d\times [0,+\infty))$), constant outside a compact set,
is a viscosity supersolution of the Cauchy problem \eqref{levelsetf} if $u(\cdot,0) \ge u_0(\cdot)$ and for all $z=(x,t)\in\R^d\times (0,+\infty)$ 
and  all $C^\infty$-test functions $\ffi$
such that $\ffi$ is admissible at $z$ and $u-\ffi$ has a
minimum at $z$ (in the domain of definition of $\ffi$)  the following holds:
\begin{itemize}
\item[(i)] If $\D\ffi(z) = 0 $, then $\ffi_t(z) \ge 0$;
\item[(ii)] If the level set $\{\ffi(\cdot,t) = \ffi(z)\}$ is noncritical, then 
%\begin{equation}\label{eqsubsolC2}
$$
\ffi_t(z)+|\D\ffi(z)| \, {\H\left(x,\{y: \ffi(y,t) \geq  \ffi(z)\}\right)}
\ge 0.
$$ 
%\end{equation}
\end{itemize}
Finally, a function $u$ is a viscosity solution of the Cauchy problem \eqref{levelsetf} if its upper semicontinuous envelope is a subsolution and 
 its lower semicontinuous envelope is a supersolution of~\eqref{levelsetf}.
 \end{definition}
 }
%Notice that the definition of viscosity solutions involves only smooth test functions and  superlevels. In the sequel, we will deal also with general test functions as in Definition \ref{defadmissible} (See Definition \ref{defvisco}).}
{ 
\begin{remark}\label{rm:strettocontatto}
As it is standard in the theory of viscosity solutions, the  maximum in Definition~\ref{defviscoC2} of  subsolutions can be assumed to be strict (and similarly for supersolutions). Indeed,  assume for instance that $u$ is a subsolution, 
{
$u-\ffi$ has a  maximum
}
 at some $(\hat x, \hat t)$, with $\ffi$ as in  Definition~\ref{defviscoC2}. We now replace replace $\ffi$ by
$$
\ffi^s(x,t):= \ffi(x,t)+s  f(|x-\hat x|)+|t-\hat t|^2\,,
$$
where $s>0$ is sufficiently small and $f\in \F$.
Then the maximum of $u-\ffi^s$ at $(\hat x, \hat t)$ is strict and we recover the subsolution  inequality for $\ffi$ by letting $s \to 0$ and using the continuity of $\H$.
\end{remark}
}

%%%%%%%%%%%%%%%%%%%%%%%%%%
%%%%%%%%%%%%%%%%%%%%%%%%%%
%%%%%%%%%%%%%%%%%%%%%%%%%%
\begin{remark}\label{consS}
Property (S) in Subsection \ref{axioms} implies that if $u$ is a continuous viscosity solution to \eqref{levelsetf}, then also $-u$ is a solution. Indeed, assume that $u$ is a subsolution; if $u-\ffi$ attains a maximum at $(x,t)$, then setting $\psi:=-\ffi$,  we have that $-u - \psi$ attains a minimum at $(x,t)$. Moreover, $\{\psi(\cdot,t) \ge \psi(x,t)\} $ is the  complementary set of the interior of $\{\ffi(\cdot,t) \ge \ffi(x,t)\} $. This fact together with (S) implies that $- u$ is a supersolution. Similarly, it follows that, if $u$ is a supersolution, then $-u$ is a subsolution. 
\end{remark}
%%%%%%%%%%%%%%%%%%%%%%%%%%
%%%%%%%%%%%%%%%%%%%%%%%%%%
%%%%%%%%%%%%%%%%%%%%%%%%%%
Throughout the  paper, we will use (with a small abuse of terminology)  the terms subsolutions and supersolutions (omitting the locution ``of the Cauchy problem \eqref{levelsetf}'') also for functions which do not satisfy the corresponding inequalities at time zero. 

%%%%%%%%%%%%%%%%%%%%%%%%%%
%%%%%%%%%%%%%%%%%%%%%%%%%%
%%%%%%%%%%%%%%%%%%%%%%%%%%
\subsection{Existence and uniqueness of a viscosity solution}

The next lemma, proved in \cite{cmp}, establishes a comparison result between viscosity subsolutions and smooth supersolutions, and vice-versa. 

\begin{lemma}\label{lm:classiccomp}
	Let $\H$ be a nonlocal curvature.	
	%Let $T>0$ be fixed and 
	Let $u\in USC( \RT)$ be a  subsolution  of \eqref{levelsetf}. Let $0\le t_0\le t_1<+\infty$, and let $\ffi\in C^{2}(\R^d \times [t_0,t_1])$ be admissible at all points in the sense of Definition~\ref{defadmissible} and such that $\ffi(\cdot,t_0)\geq u(\cdot,t_0)$, 
	\begin{equation}\label{eqponza}
	\ffi_t(x,t)+|D\ffi(x,t)|\H(x,  \{\ffi(\cdot, t)\geq \ffi(x,t)\})\geq 0
	\end{equation}
	for all $(x,t)\in \R^d \times (t_0,t_1)$, with $|D\ffi(x,t)|\neq 0$,  and  $\ffi_t(x,t)\geq 0$ if $|D\ffi(x,t)|= 0$.  Then, $\ffi\geq u$ in $\R^d \times [t_0,t_1]$. 
	An analogous comparison principle holds between viscosity supersolutions and classical subsolutions. 
\end{lemma}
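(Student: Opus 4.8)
The plan is to argue by contradiction, following the standard scheme for comparison principles between a semicontinuous viscosity subsolution and a smooth strict supersolution, with the usual doubling-of-variables in space only (time is kept single because the test function $\ffi$ is already smooth in $t$), and with the technical adjustments dictated by the singular geometric operator and the nonlocal curvature $\H$. First I would reduce to the case of a \emph{strict} supersolution: replace $\ffi$ by $\ffi^\delta(x,t) := \ffi(x,t) + \delta(t - t_0)$ for $\delta > 0$, which still satisfies $\ffi^\delta(\cdot,t_0) \ge u(\cdot,t_0)$, still has the same critical/noncritical levels and the same superlevel sets at each time (adding a function of $t$ alone does not change $\{\ffi^\delta(\cdot,t)\ge\ffi^\delta(x,t)\}$), and now satisfies the supersolution inequalities with a strict sign ($\ffi^\delta_t + |\D\ffi^\delta|\H(\cdots) \ge \delta > 0$ at noncritical points, $\ffi^\delta_t \ge \delta > 0$ at critical points). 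If we prove $\ffi^\delta \ge u$ for every $\delta > 0$, letting $\delta \to 0$ gives the claim. Also, since $u$ is constant outside a compact set and $\ffi$ is constant outside a compact set, the supremum of $u - \ffi^\delta$ over $\R^d\times[t_0,t_1]$ is attained.

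Next I would suppose, for contradiction, that $\max_{\R^d\times[t_0,t_1]}(u - \ffi^\delta) =: \sigma > 0$. Since $\ffi^\delta(\cdot,t_0)\ge u(\cdot,t_0)$, this maximum cannot occur at $t = t_0$, so it is attained at some $(\bar x,\bar t)$ with $\bar t > t_0$. The first case is $\D\ffi^\delta(\bar x,\bar t) = 0$: then $\ffi := \ffi^\delta$ is admissible at $(\bar x,\bar t)$ by hypothesis, $u - \ffi^\delta$ has a max there, so the subsolution property (i) gives $\ffi^\delta_t(\bar x,\bar t) \le 0$, contradicting $\ffi^\delta_t \ge \delta > 0$. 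The second, main case is $\D\ffi^\delta(\bar x,\bar t) \ne 0$. Here one wants to invoke subsolution property (ii), but the level set $\{\ffi^\delta(\cdot,\bar t) = \ffi^\delta(\bar x,\bar t)\}$ need not be globally noncritical even though $\D\ffi^\delta \ne 0$ at the contact point. The standard fix, used in \cite{cmp}, is to modify $\ffi^\delta$ away from $(\bar x,\bar t)$ so as to make that level set noncritical while preserving the contact condition: multiply a small ball's worth of $\ffi^\delta$ by a cutoff and add a term like $C|x - \bar x|^4$ (which vanishes to third order so it does not spoil admissibility and does not change the gradient or Hessian at $\bar x$), arranging that on the relevant level the gradient does not vanish; alternatively one passes to a nearby level $\{\ffi^\delta = \ffi^\delta(\bar x,\bar t) \pm \epsilon\}$ which, by Sard's theorem applied to $\ffi^\delta(\cdot,\bar t)$, is noncritical, and exploits the monotonicity axiom (M) to compare the curvatures of the associated superlevel sets (nested sets have ordered curvatures at common boundary points), then sends $\epsilon \to 0$ using the continuity axiom (C). Once a legitimate admissible test function with a noncritical contact level is in hand, subsolution inequality (ii) yields $\ffi^\delta_t(\bar x,\bar t) + |\D\ffi^\delta(\bar x,\bar t)|\,\H(\bar x, \{\ffi^\delta(\cdot,\bar t)\ge\ffi^\delta(\bar x,\bar t)\}) \le 0$, directly contradicting the strict supersolution inequality \eqref{eqponza} at $(\bar x,\bar t)$.

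The main obstacle is precisely the middle step: upgrading ``$\D\ffi$ does not vanish at the single contact point'' to ``the test function can be taken with a globally noncritical contact level so that $\H$ of its superlevel set is meaningful,'' while simultaneously keeping the modification admissible in the sense of Definition \ref{defadmissible} and keeping track of the nonlocal dependence of $\H$ on the whole superlevel set (so the perturbation must be controlled globally, not just near $\bar x$). This is where axioms (M) and (C) do the real work, together with Sard's theorem to locate noncritical nearby levels; the compact-support normalizations guarantee all maxima exist and all limiting sets stay in $\Reg$. The contradiction case $\D\ffi^\delta(\bar x,\bar t)=0$ is immediate, and the passage $\delta\to0$ at the end is routine, so the remaining work after the noncritical-level reduction is just bookkeeping. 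The symmetric statement (viscosity supersolution versus smooth subsolution) follows by the same argument with inequalities reversed, or — more economically — by applying the already-proved case to $-u$ and $-\ffi$, using Remark \ref{consS} and axiom (S).
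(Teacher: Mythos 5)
First, a point of reference: the paper does not actually prove this lemma --- it is quoted from \cite{cmp} ("The next lemma, proved in \cite{cmp}, \dots") --- so there is no in-paper argument to compare yours against. Your skeleton (perturb to a strict supersolution via $\ffi^\delta=\ffi+\delta(t-t_0)$, which preserves superlevel sets and admissibility; locate a positive maximum of $u-\ffi^\delta$ at some $(\bar x,\bar t)$ with $\bar t>t_0$; split according to whether $\D\ffi^\delta(\bar x,\bar t)$ vanishes; let $\delta\to 0$) is the standard and correct one. Two small caveats: the maximum may sit at $t=t_1$, where Definition \ref{defadmissible} requires the test function to live on an open time interval around $\bar t$, so a word is needed there; and your "more economical" reduction of the supersolution-vs-subsolution statement via $-u$, $-\ffi$ uses axiom (S), which the lemma does not assume (a nonlocal curvature is only (M), (T), (C)), so you must rely on your primary route of reversing inequalities.

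The genuine gap is the case $\D\ffi^\delta(\bar x,\bar t)\neq 0$ with a critical contact level, and neither of your proposed fixes works as described. (a) Adding $C|x-\bar x|^4$ keeps the maximum at $(\bar x,\bar t)$ and does not perturb $\D\ffi^\delta(\bar x,\bar t)$, but it does nothing to remove critical points of $\ffi^\delta(\cdot,\bar t)$ elsewhere on the contact level: the perturbed critical-point equation $\D\ffi^\delta+4C|x-\bar x|^2(x-\bar x)=0$ can still have solutions on the new level. (b) The Sard/monotonicity route is structurally flawed: axiom (M) compares $\H(x,E)$ and $\H(x,F)$ only at a common boundary point $x\in\partial E\cap\partial F$, whereas the nested superlevel sets $\{\ffi^\delta(\cdot,\bar t)\ge c+\epsilon\}\subset\{\ffi^\delta(\cdot,\bar t)\ge c\}$ have disjoint boundaries, so (M) yields nothing; axiom (C) requires convergence in $\Reg$, i.e.\ via $C^{\ell,\beta}$-diffeomorphisms, which is precisely what fails when the limiting level is critical; and in any case the maximum of $u-\ffi^\delta$ still sits on the original level, so you have not produced a test configuration to which condition (ii) of Definition \ref{defviscoC2} applies. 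The clean way out is to note that for hypothesis \eqref{eqponza} to be meaningful at a noncritical point the superlevel set must already lie in $\Reg$, so the lemma is tacitly restricted to test functions whose levels through noncritical points are noncritical --- as is the case for the radial profiles to which it is applied in Lemma \ref{compball} --- or, for full generality, to invoke the reduction to radially structured test functions as in \cite{is,i}, for which the contact level is a sphere. As written, your middle step is a real gap, not bookkeeping.
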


Let $\overline \gamma:(0,+\infty)\to\R$  be a nonincreasing  $C^1$ function with $\overline\gamma \ge \oc $ (recall \eqref{defbarc0}). 
Let $R_0>0$, $T>0$.
Let $\overline R:[0,T)\to (0,+\infty)$ be a solution to
\begin{equation}\label{superR}
\left\{\begin{array}{l}
\dot{R}(t)=-\overline \gamma(R(t))\\
R(0)=R_0.
\end{array}\right.
\end{equation}
Notice that for $T>0$ small enough such a solution always exists and is uniquely determined. In fact, either the unique solution exists and is strictly positive for all times or
there could exist a maximal interval $[0,T^\ast)$ such that either $\lim_{t\to T^\ast}\overline R(t)=0$ or $\lim_{t\to T^\ast}\overline R(t)=+\infty$. 

%Moreover, for every $K>0$ we   set $\underline R^\infty(t):=R_0 e^{Kt}$ for every $t\ge 0$.

The following result establishes that balls evolving with normal velocity given  by 
$\overline\gamma(\rho)$ and $K\rho$ (with $K$ given by property (B))  provide inner and outer barriers, respectively, for any viscosity solution. Its rigorous statement and its standard proof are  slight variants of \cite[Lemma 2.19]{cmp}, 
the difference being that here we deal with curvatures which are not bounded from below by a constant.  

\begin{lemma}\label{compball}
Let $\H$ be a nonlocal curvature satisfying (B).  
Let $\overline \gamma:(0,+\infty)\to\R$  be a nonincreasing $C^1$ function with $\oc\le\overline \gamma$ (with $\oc$  defined in \eqref{defbarc0}). 
%Let $R_0>0$ and let $u_0\in C(\R^d)$ constant outside a compact set.  
Let moreover $R_0>0$ and $t_0\ge 0$. Then we have:

\begin{itemize}
\item[(i)] If  $u$ is a subsolution to \eqref{levelsetf} with $u(\cdot, t_0) \le \lambda+\mu\chi_{B_{R_0}}(\cdot)$ for some $\lambda \in\R,\, \mu >0 $,   then
$$
u(\cdot, t )\le \lambda+\mu\chi_{B_{\underline R(t-t_0)}}(\cdot)\qquad\textrm{for every }t\in [t_0,+\infty),
$$
where $\underline R(t):=R_0 e^{Kt}$ for every $t\ge 0$, with $K$ given in \eqref{lwrbdkappa}.

\item[(ii)] If  $u$ is a supersolution to \eqref{levelsetf} with $u(\cdot, t_0) \ge \lambda+\mu\chi_{B_{R_0}}(\cdot)$ for some $\lambda \in\R,\, \mu <0 $,   then
$$
u(\cdot, t )\ge \lambda+\mu\chi_{B_{\underline R(t-t_0)}}(\cdot)\qquad\textrm{for every }t\in [t_0,+\infty).
$$
%where $\underline R(t):=R_0 e^{Kt}$ for every $t\ge 0$, with $K$ given in \eqref{lwrbdkappa}.

\item[(iii)] If  $u$ is a supersolution to \eqref{levelsetf} with
$u(\cdot, t_0)\ge \lambda+\mu\chi_{B_{R_0}}(\cdot)$  for some $\lambda\in\R, \, \mu>0 $, then 
$$
u(\cdot,t)\ge \lambda+\mu\chi_{B_{\overline R(t-t_0)}}(\cdot) \qquad\textrm{for every }t\in [t_0,t_0+T),
$$
where $\overline R$ is a solution to \eqref{superR} in $[0,T)$.

\item[(iv)] If  $u$ is a subsolution to \eqref{levelsetf} with
$u(\cdot, t_0)\le \lambda+\mu\chi_{B_{R_0}}(\cdot)$  for some $\lambda\in\R, \, \mu<0 $, then 
$$
u(\cdot,t)\le \lambda+\mu\chi_{B_{\overline R(t-t_0)}}(\cdot) \qquad\textrm{for every }t\in [t_0,t_0+T).
$$
\end{itemize}
\end{lemma}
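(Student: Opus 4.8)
\textbf{Proof plan for Lemma \ref{compball}.}
The strategy is to realize each moving ball as a smooth sub- or supersolution of \eqref{levelsetf} and then invoke the classical comparison principle, Lemma~\ref{lm:classiccomp}. I will treat item (i) in detail; the other three follow by symmetric arguments and by reversing inequalities. Fix $t_0\ge 0$ and set, for $t\ge t_0$, $\rho(t):=\underline R(t-t_0)=R_0 e^{K(t-t_0)}$, so that $\dot\rho(t)=K\rho(t)\ge K\rho(t)$ (with equality). The idea is to build, for each small $\delta>0$, a smooth function $\ffi^\delta(x,t)$ whose superlevel set $\{\ffi^\delta(\cdot,t)\ge c\}$ (at the relevant level $c$) is exactly the ball $\overline B_{\rho(t)}$, and which is a classical supersolution of the level set equation on $[t_0,+\infty)$ with $\ffi^\delta(\cdot,t_0)\ge u(\cdot,t_0)$; then Lemma~\ref{lm:classiccomp} gives $u(\cdot,t)\le\ffi^\delta(\cdot,t)$, and letting $\delta\to0$ yields $u(\cdot,t)\le\lambda+\mu\chi_{B_{\rho(t)}}$.

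The concrete construction: take a smooth nonincreasing profile $g:\R\to\R$ with $g\equiv 1$ on $(-\infty,-1]$, $g\equiv 0$ on $[1,+\infty)$, $g'<0$ on $(-1,1)$, and define
$$
\ffi^\delta(x,t):=\lambda+\mu\, g\!\left(\frac{|x|-\rho(t)}{\delta}\right).
$$
Then $\ffi^\delta$ is smooth away from the origin, constant outside a compact set, and $\{\ffi^\delta(\cdot,t)\ge\lambda+\tfrac{\mu}{2}\}=\overline B_{\rho(t)}$ up to an $O(\delta)$ error in the radius; more precisely the level $\{\ffi^\delta(\cdot,t)=\lambda+\mu g(0)\}$ is the sphere $\partial B_{\rho(t)}$, which is noncritical since $\D\ffi^\delta\neq0$ there. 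On this sphere we compute $\ffi^\delta_t=-\mu g'(0)\dot\rho(t)/\delta$ and $|\D\ffi^\delta|=-\mu g'(0)/\delta$ (note $g'(0)<0$, $\mu>0$, so both quantities have sign $+$ for $\ffi^\delta_t$ and $+$ for $|\D\ffi^\delta|$). Using monotonicity (M) and the definition \eqref{defbarc0}, \eqref{defbarc} of $\uc$, together with assumption (B), the curvature of the superlevel set $\overline B_{\rho(t)}$ at a boundary point satisfies $\H(x,\overline B_{\rho(t)})\ge \uc(\rho(t))\ge -K\rho(t)$ once $\rho(t)\ge1$ (and one absorbs the range $\rho(t)<1$ into the constant, as remarked after \eqref{lwrbdkappa}, or simply enlarges $R_0$). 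Hence on the noncritical level
$$
\ffi^\delta_t+|\D\ffi^\delta|\,\H(x,\{\ffi^\delta(\cdot,t)\ge\ffi^\delta(x,t)\})
\ \ge\ \frac{-\mu g'(0)}{\delta}\bigl(\dot\rho(t)-K\rho(t)\bigr)\ =\ 0,
$$
so \eqref{eqponza} holds; at points with $\D\ffi^\delta=0$ (which occur only off the distinguished sphere, where $g'$ vanishes) one checks $\ffi^\delta_t=0\ge0$. Thus $\ffi^\delta$ is an admissible classical supersolution. Since $g\ge\chi_{(-\infty,0]}$ pointwise we get $\ffi^\delta(\cdot,t_0)\ge\lambda+\mu\chi_{B_{R_0}}\ge u(\cdot,t_0)$, and Lemma~\ref{lm:classiccomp} yields $u(\cdot,t)\le\ffi^\delta(\cdot,t)$ on $[t_0,+\infty)$. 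Sending $\delta\to0^+$ gives the claim (i). For (iii) one argues identically but now the radius solves $\dot R=-\overline\gamma(R)$, uses $\overline\gamma\ge\oc\ge\H(x,\overline B_{R(t)})$ via \eqref{defbarc0} to verify the \emph{subsolution} inequality for $\ffi^\delta$, and appeals to the comparison between supersolutions and classical subsolutions; items (ii) and (iv) follow from (iii) and (i) respectively by applying the symmetry-type relation $u\mapsto-u$ (Remark \ref{consS}) or, more directly, by repeating the construction with $\mu<0$ and reversing all inequalities, the role of interior and exterior balls being interchanged so that one uses $-\H(x,\R^d\setminus B_\rho)$ in \eqref{defbarc0}, \eqref{defbarc}.

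The main technical obstacle is the singular behavior of the test function at its critical points, i.e.\ the need to verify admissibility in the sense of Definition~\ref{defadmissible}: at any point where $\D\ffi^\delta$ vanishes we must exhibit $f\in\F$ and $\omega$ controlling the oscillation of $\ffi^\delta$. For the radial profile above the critical points are precisely those where $g'$ vanishes, i.e.\ $|x|$ outside the shell $[\rho(t)-\delta,\rho(t)+\delta]$, and there $\ffi^\delta$ is \emph{locally constant} in $x$ and smooth in $t$, so the admissibility condition is trivially met (take any $f\in\F$, which is nonempty by \cite[p.~229]{is}, and $\omega(r)=Cr^2$). Hence the singularity is harmless here. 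A second, more bookkeeping-type point is the passage from the discretized radius $\rho(t)+O(\delta)$ to the exact ball $B_{\rho(t)}$ in the limit $\delta\to0$: since $\{u(\cdot,t)\ge\lambda+c\}$ is closed and $\ffi^\delta\to\lambda+\mu\chi_{\overline B_{\rho(t)}}$ in, say, $L^\infty_{loc}$ away from $\partial B_{\rho(t)}$, the inequality $u\le\ffi^\delta$ survives the limit everywhere except possibly on the null sphere $\partial B_{\rho(t)}$, and upper semicontinuity of $u$ closes that gap. The only genuinely new ingredient compared to \cite[Lemma 2.19]{cmp} is that $\oc$ need not be bounded below by a constant, which is exactly why assumption (B) enters through the linear bound $\uc(\rho)\ge-K\rho$; tracking this linear-in-$\rho$ growth through the ODE $\dot R=KR$ (giving the exponential barrier $R_0e^{Kt}$ rather than a linear one) is the reason the outer barrier in (i)–(ii) has the stated form.
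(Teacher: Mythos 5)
Your overall strategy coincides with the paper's: build a smoothed characteristic function of a ball expanding at rate $\dot\rho=K\rho$, check it is a classical supersolution, apply Lemma~\ref{lm:classiccomp}, and let the smoothing parameter go to zero. However, two steps in your verification fail as written.

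First, the initial comparison. You take $g\equiv1$ on $(-\infty,-1]$, $g'<0$ on $(-1,1)$, and set $\ffi^\delta(x,t_0)=\lambda+\mu\,g\bigl((|x|-R_0)/\delta\bigr)$. Then for $R_0-\delta<|x|<R_0$ one has $g\bigl((|x|-R_0)/\delta\bigr)<1$, so $\ffi^\delta(x,t_0)<\lambda+\mu=\lambda+\mu\chi_{B_{R_0}}(x)$: the barrier dips \emph{below} the initial datum inside $B_{R_0}$, and your claim ``$g\ge\chi_{(-\infty,0]}$'' is incompatible with $g'(0)<0$. The smoothing must be one-sided, i.e.\ the transition shell must lie entirely outside $B_{R_0}$ at $t=t_0$ (this is why the paper's profile $\psi^\ep$ satisfies $\psi^\ep\ge\chi_{(-\infty,R_0]}$ with support in $(-\infty,R_0+\ep]$).

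Second, and more substantively, Lemma~\ref{lm:classiccomp} requires \eqref{eqponza} at \emph{every} noncritical point, whereas you only verify it on the single level $\{\ffi^\delta=\lambda+\mu g(0)\}$. At a point with $|x|=\rho(t)+\delta a$, $a\in(0,1)$, the superlevel set is $\overline B_{\rho(t)+\delta a}$, which still moves with normal speed $\dot\rho(t)=K\rho(t)$, while (B) only gives $-\H(x,\overline B_{\rho(t)+\delta a})\le K(\rho(t)+\delta a)$; monotonicity (M) only worsens this, since the relevant comparison ball is the \emph{larger} one. The resulting defect in \eqref{eqponza} is bounded below only by $\mu K a\,g'(a)<0$, which does \emph{not} vanish as $\delta\to0$, so the supersolution property cannot be concluded from the hypotheses. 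The paper's fix is precisely to enlarge the radius function to $\underline R^\ep(\tau)=(R_0+\ep)e^{K\tau}$ and compose the profile with $|x|+R_0+\ep-\underline R^\ep(t-t_0)$, so that every superlevel set is contained in $B_{\underline R^\ep(t-t_0)}$ \emph{and} every level moves with the speed $K\underline R^\ep(t-t_0)$ of that largest ball; then (M), (T) and (B) close the chain uniformly over all levels. A smaller point: at the edge critical points $|x|=\rho(t)\pm\delta$ the function is not locally constant in $x$, so admissibility there is not automatic for ``any $f\in\F$''; one must choose the profile sufficiently flat where its derivative vanishes, as the paper notes explicitly. With these corrections your argument becomes the paper's proof.
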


\begin{proof}
We only prove (i) and (iii), the proof of (ii) and (iv) being fully analogous. 

We start with the proof of (i); for every $\e>0$ let $\psi^\ep:\R\to [0,+\infty)$ be a smooth nonincreasing function, constant in $(-\infty,0]$, with  support in $(-\infty,R_0+\e]$ , $(\psi^\e)'(0) = (\psi^\e)''(0) = 0$,  $\psi^\e\ge \chi_{(-\infty,R_0]}$ and  $\psi^\e\to \chi_{(-\infty,R_0]}$ pointwise. 
Moreover, introduce the function $\underline{R}^\ep:\R\to\R$ defined by $\underline{R}^\ep(\tau)=(R_0+\ep) e^{K\tau}$. 
For every $\e>0$ we set
\begin{equation*}
\ffi^\e(x,t):=\lambda+\mu\psi^\e (|x|+R_{0}+\ep- \underline{R}^\ep(t-t_0)),\qquad x\in\R^d, t\ge t_0.
\end{equation*} 
One can always choose $\psi^\e$ flat enough wherever $(\psi^\e)'=0$, so that the functions $\ffi^\e$ are  admissible at all points.
By assumption
$$
\ffi^\e(x,t_0)=\lambda+\mu\psi^\e (|x|)\geq  \lambda+\mu\chi_{B_{R_0}}(x)\ge u_0(x)\,.
$$  

%\red{Observe that $\ffi^\e(x,t_0)=\lambda+\mu\psi^\e (|x|+R_0)\geq  \lambda+\mu\chi_{[0, R_0]} (|x|+R_0)$  ma da qui non concludo che $\ffi^\e(x,t_0)\geq  u(x, t_0)$...}

%Per concludere questo dovrei definire \[
%\ffi^\e(x,t):=\lambda+\mu\psi^\e (|x|e^{-K(t-t_0)}),\qquad x\in\R^d, t\ge t_0
%\]   in altre parole  devo calcolare $\psi^\e$  nel    dato iniziale per cui l'equazione $R'=KR$ al tempo $t$ arriva in $|x|$. 
%
%\red{Nel caso in cui l'ode $R'(t)=KR(t)$ abbia soluzione esplicita, allora tutto l'argomento sotto funziona, nel caso invece in cui, nella stima sotto debba considerare $\overline{R}$ non sono del tutto sicura- anche perch\'e il calcolo di $D\phi_\e$ diventa pi\`u complicato....  forse bisogna assumere che $\overline{\gamma}$ sia derivabile? 
%nella funzione test al posto di $|x|e^{-K(t-t_0)}$ devo mettere $R(|x|, t-t_0)$ che \`e definito come il dato iniziale a partire dal quale l'ode $R'=\overline{\gamma}(R)$ arriva in $|x|$ al tempo $t-t_0$.
%quindi quando calcolo $D\phi^\e$ devo calcolare la derivata rispetto al dato iniziale della soluzione di una ode no?
%} 
%

By a direct computation we will briefly show  that $\ffi^\ep$ satisfies the assumptions of Lemma \ref{lm:classiccomp} above, i.e., it is a smooth supersolution to \eqref{levelsetf}. On the one hand, recalling that $\ffi^\e$ is constant whenever the argument of $\psi^\e$ is nonpositive, we easily get that  if $ \D\ffi^\ep(x,t) =0$, then 
$\ffi^\ep_t(x,t)=0$. 
On the other hand, in the  noncritical case
all the superlevels of $\ffi^\e$ expand with normal velocity equal to  $ K \underline{R}^\ep(t-t_0)$, so that
\begin{equation*}
\begin{aligned}
\ffi^\ep_t(x,t)=  &|\D\ffi^\ep(x,t)| K \underline{R}^\ep(t-t_0) \ge  -|\D\ffi^\ep(x,t)| \H(x,\overline{B}_{ \underline{R}^\ep(t-t_0)})\\
\ge &-|\D\ffi^\ep(x,t)| \H(x,\{\ffi^\ep(\cdot,t)\ge\ffi^\ep (x,t)\})\,,
\end{aligned}
\end{equation*}
where the first inequality is a consequence of (B) and the second one follows noticing that, by the very definition of $\ffi^\e$,
$$
\{\ffi^\ep(\cdot,t)\ge\ffi^\ep (x,t)\}\subset B_{ \underline{R}^\ep(t-t_0)}
$$
and using property (M).
By Lemma \ref{lm:classiccomp}, we deduce that $u\le \ffi^\ep$, which, sending $\ep\to 0$, yields $u\le \lambda+\mu\chi_{B_{\underline{R}(t-t_0)}}$.

Let us pass to the proof of (iii).  For every $\e>0$ let now $\psi^\ep:[0,+\infty)\to [0,+\infty)$ be a smooth nonincreasing function, with  support in $(-\infty,R_0-\e]$ , $(\psi^\e)'(0) = (\psi^\e)''(0) = 0$,  $\psi^\e\le \chi_{[0,R_0]}$ and  $\psi^\e\to \chi_{[0,R_0]}$ pointwise. 
Moreover, consider the Cauchy problem
\begin{equation}\label{superRep}
\left\{\begin{array}{l}
\dot{R}(t)=-\overline \gamma(R(t))\\
R(0)=R_0-\ep.
\end{array}\right.
\end{equation}
Since $\overline\gamma\in C^1$, for $\e>0$ small enough the (unique) 
solutions $\overline{R}^\ep$
to the problem \eqref{superRep} 
are  all defined and $C^2$ regular on some interval $[0,T)$,
with $T>0$ independent of $\e$, and uniformly converge to the solution 
$\overline{R}$ to \eqref{superR}. 

Now, setting, for every $\e>0$,
\begin{equation*}
\ffi^\e(x,t):=\lambda+\mu\psi^\e (|x|+R_{0}-\ep- \overline{R}^\ep(t-t_0)),\qquad x\in\R^d, t\ge t_0,
\end{equation*} 
and arguing as in the proof of (i), one can easily show the claim.
\end{proof}

While Lemma \ref{lm:classiccomp} provides a comparison principle between viscosity solutions and regular evolving sets, the following result establishes the comparison principle between pairs of viscosity solutions. 
%For its proof we refer to \cite{cmp}. 
%We only point out that there the function $\gamma$  in the definition of the family $\F$ of test functions coincides with $\oc$. 

\begin{theorem}[Comparison Principle] \label{th:CP}
Assume that $\H$ is a nonlocal curvature satisfying (C') and (B).  
Let $u\in USC( \RT)$ and $v\in LSC(\RT)$, both constant (spatially)
out of a compact set, be a subsolution and a supersolution of \eqref{levelsetf}, respectively. If $u(\cdot,0) \le v(\cdot,0)$,  
then $u\le v$ in $\RT$.
\end{theorem}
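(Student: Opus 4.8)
\textbf{Proof strategy for the Comparison Principle (Theorem \ref{th:CP}).}

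The plan is to follow the standard doubling-of-variables scheme of Crandall--Ishii--Lions, adapted to the nonlocal geometric setting exactly as in \cite{cmp}, but now exploiting the barrier estimates of Lemma~\ref{compball} to handle the fact that the curvature need not be bounded below by a constant. First I would reduce to a fixed finite time horizon: fix $T_0>0$ and argue on $\R^d\times[0,T_0]$; since everything is local in time, it suffices to prove $u\le v$ on such slabs. Next, a preliminary normalization: replacing $u$ by $u-\eta t$ and working with $v$ as is, or more precisely arguing by contradiction, assume $\sup_{\R^d\times[0,T_0]}(u-v)=:2\delta>0$. Because $u$ and $v$ are constant outside a compact set and $u(\cdot,0)\le v(\cdot,0)$, the positivity of this supremum forces it to be attained (or nearly attained) at some point with $t>0$ and inside a fixed compact set. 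To rule out the maximum escaping to $t=T_0$ or to spatial infinity, and to create the needed strictness and coercivity, I would introduce the usual penalizations: consider
$$
\Phi_{\e,\sigma,\lambda}(x,y,t):=u(x,t)-v(y,t)-\frac{|x-y|^2}{2\e}-\sigma(|x|^2+|y|^2)-\frac{\lambda}{T_0-t},
$$
and let $(x_\e,y_\e,t_\e)$ be a maximum point. Standard arguments give $|x_\e-y_\e|^2/\e\to 0$ and $t_\e$ bounded away from $T_0$ as $\e\to0$ (for fixed small $\sigma,\lambda$), and $t_\e>0$ for $\delta$ large enough compared to the modulus of continuity of $u_0$.

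The heart of the proof is the case analysis at the maximum point $(x_\e,y_\e,t_\e)$, according to whether the ``test gradient'' $p_\e:=(x_\e-y_\e)/\e$ vanishes or not. If $p_\e\ne 0$, the superlevel sets of the natural test functions at $x_\e$ (for $u$) and at $y_\e$ (for $v$) are smooth near those points, with interior and exterior ball conditions of a radius $\rr$ controlled by $\e$ and the $C^2$ bounds of the penalization; one then writes the sub- and supersolution inequalities (item (ii) of Definition~\ref{defviscoC2}), subtracts them, and the key point is to compare $\H(x_\e,\{\text{superlevel of }u\})$ with $\H(y_\e,\{\text{superlevel of }v\})$. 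Here one uses the geometric ordering of the two superlevel sets induced by the fact that $(x_\e,y_\e)$ is a maximum of the doubled function — up to the translation $y\mapsto y+(x_\e-y_\e)$, one superlevel set contains the other near the relevant point — together with monotonicity (M), translational invariance (T), and crucially the uniform continuity (C') to absorb the error coming from the ball-condition radius and from the fact that the two frozen hypersurfaces are $C^{\ell,\beta}$-close via a diffeomorphism that is the identity far away. This yields, after letting $\sigma,\lambda\to0$, the contradiction $0<\lambda/T_0^2\le \omega_\rr(\text{small})\to 0$. If instead $p_\e=0$, one invokes item (i) of Definition~\ref{defviscoC2}, which for the subsolution gives $\varphi_t\le0$ and for the supersolution $\varphi_t\ge0$ at the doubled test function, again producing a contradiction with the $\lambda/(T_0-t)$ term; the admissibility of the doubled test function at a point where $\D\varphi=0$ must be checked, and this is exactly where the structure of the class $\F$ and Definition~\ref{defadmissible} enter, since the $|x-y|^2/2\e$ term is smooth and has the required vanishing derivatives.

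The main obstacle I anticipate is the precise handling of the nonlocal term in the $p_\e\ne 0$ case: one must verify that the two competing superlevel sets at the maximum point are genuinely comparable (after the appropriate rigid translation) in a neighborhood large enough for (M) to apply, and that the discrepancy away from that neighborhood is controlled by a single diffeomorphism of class $C^{\ell,\beta}$ supported in a unit ball and close to the identity, so that (C') applies with a radius $\rr$ that does \emph{not} degenerate as $\sigma,\lambda\to0$ (it is allowed to degenerate as $\e\to0$, but one sends $\sigma,\lambda\to0$ first). Keeping the order of limits straight ($\sigma,\lambda\to0$ with $\e$ fixed, then $\e\to0$) and ensuring the ball conditions hold uniformly is the delicate bookkeeping; everything else is a routine transcription of \cite[Theorem 2.20 and its proof]{cmp}, with Lemma~\ref{compball} supplying the a priori confinement of the solutions to a bounded region on $[0,T_0]$ that replaces any use of a constant lower bound on the curvature.
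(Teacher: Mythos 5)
The paper itself supplies no proof of Theorem~\ref{th:CP}; it defers to the comparison-principle argument of \cite{cmp}, remarking only that enlarging $\gamma$ and replacing the constant lower bound on $\oc$ with (B) (handled via Lemma~\ref{compball}) changes no step. Your doubling-of-variables outline therefore follows the same route as the paper. There is, however, a genuine error in your treatment of the case $p_\e=0$ that would break the argument as written.

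You claim that ``the $|x-y|^2/2\e$ term is smooth and has the required vanishing derivatives,'' but $f(r)=r^2/(2\e)$ satisfies $f''(0)=1/\e\neq 0$, so $f\notin\F$; in fact every $f\in\F$ obeys $f(r)=o(r^2)$ as $r\to 0$. At a maximum of the doubled function where the spatial gradient of the test function for $u$ (or $v$) vanishes, the bound $|\ffi(x,t)-\ffi(\hat z)-\ffi_t(\hat z)(t-\hat t)|\le f(|x-\hat x|)+\omega(|t-\hat t|)$ required by Definition~\ref{defadmissible} therefore cannot hold for \emph{any} $f\in\F$: near $\hat x$ the left-hand side behaves like $|x-\hat x|^2/(2\e)$, while the right-hand side is $o(|x-\hat x|^2)$. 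The quadratic penalization is simply not an admissible test function at such a point, Definition~\ref{defviscoC2} yields no inequality, and the argument stalls. This is exactly the degeneracy the class $\F$ and the Ishii--Souganidis condition \eqref{IS} are built for, and the correct penalization, used in \cite{cmp} following \cite{is,CGG}, is $\e^{-1}f(|x-y|)$ for a prescribed $f\in\F$ adapted to $\gamma\ge\oc$, so that at a critical maximum the test function genuinely has the required flatness and item (i) applies; this choice must be made from the outset, and it also changes the bookkeeping in the noncritical case, where the radius of the interior/exterior ball condition is governed by $f$ rather than by $\e^{-1}$. Two smaller remarks: the closing chain ``letting $\sigma,\lambda\to 0$, $0<\lambda/T_0^2\le\omega_\rr(\text{small})\to 0$'' is self-defeating, since sending $\lambda\to 0$ kills the left-hand side too (keep $\lambda$ fixed, pass to the limit in the remaining parameters, and contradict the fixed positive quantity $\lambda/T_0^2$); and ``$t_\e>0$ for $\delta$ large enough'' is not available, since $\delta$ is determined by $u,v$ and is not a free parameter---one instead rules out $t_\e=0$ using $u(\cdot,0)\le v(\cdot,0)$ together with semicontinuity.
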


The  existence and uniqueness result for viscosity solutions to \eqref{levelsetf} is provided by the following theorem.

\begin{theorem}[Existence and uniqueness]\label{exun}
Assume that $\H$ is a nonlocal curvature satisfying (B).
Let $u_0\in C(\R^d)$ be a uniformly continuous function with $u_0=C_0$ in $\R^d\setminus B_{R_0}$, for some $C_0,R_0\in\R$ with $R_0>0$. 

There exists a  viscosity solution $u:\R^d\times [0,+\infty)$ to \eqref{levelsetf}. Moreover, any viscosity solution satisfies $u(\cdot,t)=C_0$ in    $\R^d\setminus B_{R(t)}$, where $R(t):= R_0 e^{Kt}$ with $K$ introduced in \eqref{lwrbdkappa}.

Finally, if $\H$ satisfies (C'), then the viscosity solution  to \eqref{levelsetf} is unique. 
\end{theorem}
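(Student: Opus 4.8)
\textbf{Proof plan for Theorem \ref{exun}.} The plan is to follow the Perron method, which is standard in viscosity solution theory but requires care because of the singular nature of the operator at points where $\D u = 0$ and because of the nonlocal dependence on superlevel sets.

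First I would construct suitable barriers at the initial time. Since $u_0$ is uniformly continuous, for every $\xi\in\R^d$ and every $\e>0$ there exist a ball $B_{r}(\xi)$ and a paraboloid-type smooth function (built from some $f\in\F$ composed with $|x-\xi|$, plus a linear-in-time term with large enough slope) that lies above $u_0$ at $\xi$ with matching value and stays above $u_0$ everywhere; invoking Lemma \ref{compball}(iii)--(iv) and the bound on the velocity of evolving balls, these give smooth super- and subsolutions $\ffi^{\pm}_{\xi,\e}$ on a short time interval, which by Lemma \ref{lm:classiccomp} bound any subsolution from above and any supersolution from below. Taking $u_0 = C_0$ outside $B_{R_0}$ and using Lemma \ref{compball}(i), the outer profile $R(t) = R_0 e^{Kt}$ follows, giving the claimed spatial constancy at all later times; this also guarantees, via (B), that the solution stays bounded and is globally defined on $[0,+\infty)$. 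Then I would define $u$ as the supremum of all subsolutions $w$ of \eqref{levelsetf} lying below the constructed supersolution barriers, and show by the usual Perron argument that $u^*$ (upper semicontinuous envelope) is a subsolution and $u_*$ (lower semicontinuous envelope) is a supersolution. The subsolution property of $u^*$ is a routine stability-under-sup argument using the admissible test function framework of Definition \ref{defadmissible} and the continuity axiom (C). For the supersolution property of $u_*$, the standard "bump" argument applies: if $u_*$ failed to be a supersolution at some point, one could slightly push up a subsolution there, contradicting maximality; here the nonlocal term and the two-regime structure (i) $\D\ffi = 0$ and (ii) noncritical level set in Definition \ref{defviscoC2} must both be handled, which is where the delicate point lies. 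Finally, the barriers at $t=0$ force $u(\cdot,0) = u_0$, so $u$ is a genuine viscosity solution.

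For uniqueness, once (C') holds one simply invokes the Comparison Principle, Theorem \ref{th:CP}: if $u, v$ are two viscosity solutions, then $u^*$ is a subsolution and $v_*$ is a supersolution with $u^*(\cdot,0) = u_*(\cdot,0) = u_0 = v^*(\cdot,0) = v_*(\cdot,0)$, whence $u^* \le v_*$ and, by symmetry, $v^* \le u_*$; combined with $u_* \le u^*$ and $v_* \le v^*$ this yields $u = v$ and in particular that $u$ is continuous. The spatial-constancy claim is uniform over all viscosity solutions because it follows directly from Lemma \ref{compball}(i)--(ii) applied to the (semicontinuous envelopes of any) solution with the two-sided bound $C_0 \pm \mu\chi_{B_{R_0}}$ at time zero for suitable $\mu$, independently of the Perron construction.

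The main obstacle I expect is verifying the supersolution property of $u_*$ in the Perron step at points where $\D\ffi$ vanishes: the operator is genuinely discontinuous there, and one must use the precise class $\F$ of test profiles satisfying \eqref{IS}--\eqref{ISvera} to absorb the singular contribution of the curvature of small spheres, exactly as in the barrier construction of Lemma \ref{compball}. The nonlocality adds the further subtlety that when we perturb a candidate subsolution near a point, the superlevel set feeding into $\H$ changes, so one must argue that the perturbation can be taken small enough (in $C^{\regu}$, cf. (C) / (C')) that the curvature value changes negligibly — this is precisely why axiom (C) is built into the definition of nonlocal curvature, and I would lean on it heavily. Since both Theorem \ref{th:CP} and Lemma \ref{lm:classiccomp} are already available, the proof is essentially a matter of assembling these ingredients in the standard Perron/comparison pattern, with the singular-test-function bookkeeping being the only genuinely technical part.
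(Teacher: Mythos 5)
Your Perron-method construction matches the approach in the reference \cite{cmp} on which the paper relies; in fact the paper does not reprove Theorem \ref{exun}, but only remarks that the argument of \cite[Subsection~2.6]{cmp} (existence on compact time intervals under the stronger assumption (B')) extends to global solutions once (B') is weakened to (B) and \cite[Lemma~2.19]{cmp} is replaced by Lemma~\ref{compball}, and that enlarging $\gamma$ above $\oc$ in the test-function class is harmless. Your sketch already builds in both of these modifications --- it invokes Lemma~\ref{compball} for the barriers and the global-in-time extension, and it correctly reduces uniqueness to Theorem~\ref{th:CP} --- so the two are essentially the same.
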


Theorems \ref{th:CP} and  \ref{exun} have been established in \cite{cmp} under minor irrelevant differences in the assumptions and in the claims. In fact, in \cite{cmp} the class of test functions has been defined for $\gamma=\oc$. However, replacing 
 $\oc$ with a larger $\gamma$ does not affect any step in the proof.  
 Moreover,  
the proof of  the existence in Theorem \ref{exun} follows along the lines of the results \cite[Subsection 2.6]{cmp};  there, the existence of a viscosity solution in any given compact interval $[0,T]$ is provided, under the assumption (B') that the curvature of balls $B_R$ is bounded from below by a constant $-K$ independent of $R$. However, the results in \cite{cmp} can be easily extended to obtain global solutions also replacing (B') with our weaker assumption (B) and using Lemma \ref{compball} in place  of \cite[Lemma 2.19]{cmp}.

\begin{remark}
The notion of viscosity solutions apparently depends   on the choice of the function $\gamma$ in Definition \ref{defadmissible}. In fact, such a dependence is fictitious, since  if $\gamma_2\ge \gamma_1$, then the class of admissible test functions corresponding to $\gamma_2$ are also admissible replacing $\gamma_2$ with $\gamma_1$. Therefore, the unique solution provided by Theorem \ref{exun} does not depend on the specific choice of $\gamma$.
\end{remark}

%%%%%%%%%%%%%%%%%%%%%%%%%%
%%%%%%%%%%%%%%%%%%%%%%%%%%
%%%%%%%%%%%%%%%%%%%%%%%%%%

Given a continuous function $v:\R^d \times A\to \R$, with $A\subseteq [0,+\infty)$,  we will say that a function $\osp:[0,+\infty)\to[0,+\infty)$ is a spatial  modulus of continuity of $v$ if it is strictly increasing and continuous in $[0,R)$ for some $R>0$, it  satisfies $\osp(0)=0$ and 
$$
|v(x_1,t)-v(x_2,t)|\le \osp(|x_1-x_2|) \qquad \text{ for all } x_1,\,x_2\in\R^d, \, t\in A.
$$ 
The time moduli of continuity are defined analogously and denoted by $\oti$.

\begin{proposition}[Uniform continuity]\label{ucest}
Let $\osp$ be a  spatial modulus of continuity    and let $\gamma:(0,+\infty)\to\R$  be a nonincreasing $C^1$ function.

Then, there exists a time modulus of continuity $\oti$ such that, 
for all nonlocal curvatures $\H$ satisfying (B), (C'), (S), and  such that $\oc\le\gamma$ (with $\oc$  defined in \eqref{defbarc0}) and  for any initial datum $u_0\in C(\R^d)$   
constant out of a compact set and  
with spatial modulus of continuity  $\osp$, we have that 
 $\osp$ and $\oti$ are spatial and time moduli of continuity, respectively, of the  viscosity solution  to \eqref{levelsetf} with initial datum $u_0$.
 %any   viscosity solution $u$ to \eqref{levelsetf} has spatial modulus of continuity $\osp$ and time modulus of continuity $\oti$. 
\end{proposition}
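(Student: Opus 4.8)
\smallskip
\noindent\emph{Plan.} I would prove the spatial and the temporal estimates separately. The spatial bound is soft. Let $u$ be the (unique, by Theorem~\ref{exun}, since $\H$ satisfies (B) and (C')) viscosity solution of \eqref{levelsetf} with datum $u_0$. For $h\in\R^d$ the translate $u^h(x,t):=u(x+h,t)$ is, by (T), again a viscosity solution of \eqref{levelsetf}, now with datum $u_0(\cdot+h)$; since adding a constant to a solution does not change its superlevel sets, $u+\osp(|h|)$ is also a solution, with datum $u_0+\osp(|h|)\ge u_0(\cdot+h)$. Theorem~\ref{th:CP} then gives $u^h\le u+\osp(|h|)$, and exchanging $x$ with $x+h$ yields $|u(x+h,t)-u(x,t)|\le\osp(|h|)$ for all $x,h$ and all $t\ge0$; thus $\osp$ is a spatial modulus of continuity of $u(\cdot,t)$ at every time.

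\smallskip
For the time modulus I would trap $u$, near each point, between radial barriers provided by Lemma~\ref{compball}. Fix $x_0\in\R^d$, $t_0\ge0$, $\tau>0$ and set $a:=u(x_0,t_0)$. By the spatial estimate just proved, $u(\cdot,t_0)\ge a-\osp(\rho)$ on $B_\rho(x_0)$ for every $\rho>0$. Put $\lambda:=\inf_{\R^d\setminus B_\rho(x_0)}u(\cdot,t_0)$ and $\mu:=a-\osp(\rho)-\lambda$. If $\mu\le0$, then $u(\cdot,t_0)\ge a-\osp(\rho)$ on all of $\R^d$ and comparison with the constant subsolution $a-\osp(\rho)$ already gives $u(x_0,t_0+\tau)\ge a-\osp(\rho)$. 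If $\mu>0$, then $u(\cdot,t_0)\ge\lambda+\mu\chi_{B_\rho(x_0)}$, and Lemma~\ref{compball}(iii), applied with $\overline\gamma:=\gamma$ (legitimate, $\gamma$ being $C^1$, nonincreasing and $\ge\oc$) and with the ball recentered at $x_0$ via (T), yields $u(x_0,t_0+t)\ge\lambda+\mu=a-\osp(\rho)$ for every $t$ such that the solution $\overline R$ of \eqref{superR} with $\overline R(0)=\rho$ stays positive on $[0,t]$. Hence, if $\rho$ is chosen so that this ODE solution survives on $[0,\tau]$, then $u(x_0,t_0+\tau)\ge u(x_0,t_0)-\osp(\rho)$.

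\smallskip
It remains to pick $\rho=\rho(\tau)$, depending only on $\gamma$, with $\rho(\tau)\to0$ as $\tau\to0^+$ and with the above ball surviving on $[0,\tau]$. Since \eqref{superR} with $\overline\gamma=\gamma$ has, for each $\rho>0$, a unique maximal solution that is positive for short positive times and depends monotonically and continuously on the initial radius, its lifespan $T(\rho)\in(0,+\infty]$ is nondecreasing in $\rho$; a short case analysis — $\lim_{\rho\to0^+}T(\rho)>0$ (every small $\rho$ works, so one may take $\rho(\tau)=\tau$) versus $\lim_{\rho\to0^+}T(\rho)=0$ (invert $T$ near $0$) — produces such a $\rho(\cdot)$. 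Setting $\oti(\tau):=\osp(\rho(\tau))$ yields a time modulus (strictly increasing and continuous near $0$, vanishing at $0$, depending only on $\osp$ and $\gamma$) with $u(x_0,t_0+\tau)\ge u(x_0,t_0)-\oti(\tau)$, uniformly over admissible $x_0,t_0,\tau,u_0$ and over all $\H$ with $\oc\le\gamma$. For the opposite inequality I would use (S): by Remark~\ref{consS}, $-u$ is the viscosity solution with datum $-u_0$, which again has spatial modulus $\osp$ and is constant outside a compact set, so the bound just proved, applied to $-u$, reads $u(x_0,t_0+\tau)\le u(x_0,t_0)+\oti(\tau)$. (Alternatively, Lemma~\ref{compball}(iv) supplies the outer barrier directly.) Together these give $|u(x_0,t)-u(x_0,t')|\le\oti(|t-t'|)$ for all $x_0$ and $t,t'\ge0$, which with the first step is the assertion.

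\smallskip
\noindent\emph{Main obstacle.} The translation/comparison argument and the symmetrisation via (S) are soft; the real content is the barrier step, and specifically the calibration of the barrier radius $\rho$ against the time increment $\tau$ — one needs $\rho\to0$ as $\tau\to0$ while the shrinking ball of initial radius $\rho$ still lives up to time $\tau$ — together with the check that the resulting modulus $\oti$ depends on $\gamma$ and $\osp$ only, and not on $\H$ or on $u_0$. The (harmless) degenerate subcases, in which the coefficient $\mu$ of the ball barrier comes out nonpositive, are disposed of by comparison with constants, as indicated above.
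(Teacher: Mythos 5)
Your proof is correct and follows essentially the same route as the paper: the spatial modulus via translation invariance and the comparison principle, and the temporal modulus via ball barriers from Lemma~\ref{compball}(iii) calibrated against the ODE \eqref{superR}, with the reverse inequality obtained from (S). The only cosmetic difference is that you fix $\tau$ and produce a compatible radius $\rho(\tau)$ via a lifespan argument, whereas the paper fixes $\ep$, sets $R_0^\ep=(\osp)^{-1}(\ep)$, and writes the explicit survival time $\tau_\ep = R_0^\ep/(2\gamma(R_0^\ep/2))$ (with the convention $\tau_\ep=1$ when $\gamma(R_0^\ep/2)\le 0$) — the latter avoids your case analysis on $\lim_{\rho\to 0^+}T(\rho)$ and gives the modulus $\oti$ directly.
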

%%%%%%%%%%%%%%%%%%%%%%%%%%
%%%%%%%%%%%%%%%%%%%%%%%%%%
%%%%%%%%%%%%%%%%%%%%%%%%%%
\begin{proof}
The fact that $\osp$ is a spatial modulus of continuity for $u$ is a well known consequence of the comparison principle and the invariance by translations of the curvature $\H$. For the reader's convenience we provide such a proof. 
Let $\eta \in  \R^d$ and set $v^\eta_0(x):=u_0(x+\eta)+\osp(|\eta|)$ for every $x\in\R^d$. Clearly, $v_0$ is constant outside a compact set, and, since $\osp$ is a modulus of continuity of $u_0$, we have
$$
u_0(x) \le u_0(x+\eta)+\osp(|\eta|)=v^\eta_0(x)\qquad\textrm{for every }x\in\R^d.
$$  
By Theorem \ref{exun} and by property (T), the function $v^\eta:\R^d\times [0,+\infty)\to\R$ defined by $v^\eta(x,t):=u(x+\eta,t)+\osp(|\eta|)$ is the viscosity solution of \eqref{levelsetf} with initial datum $v_0^\eta$.
By Theorem \ref{th:CP} we have 
$$
u(x,t) \le u(x+\eta,t)+\osp(|\eta|)\quad\textrm{for every }x,\eta\in\R^d,t\in [0,+\infty).
$$
%which for $x=x_1$ yields $u(x_1,t)\le u(x_2,t)+\osp(|x_2-x_1|)$ for every $t\in [0,+\infty)$.
%The proof of the opposite inequality is fully analogous and is omitted.

We now prove the existence of the modulus of continuity $\oti$ for $u$, i.e., we show that for every $\ep>0$ there exists $\tau_\ep>0$ such that
\begin{equation}\label{uctime}
u(x,t_0) -\ep\le u(x,t)\le u(x,t_0)+\ep \quad\textrm{for every }x\in\R^d\,,\, 
%t,t_0\ge 0
0\le t_0\le t\le t_0+\tau_\ep.
\end{equation}
 We start by proving the first inequality in \eqref{uctime}. 
 %being the proof of the second inequality fully analogous.
Let $\ep>0$ and set $R^\ep_0:=(\osp)^{-1}(\ep)$. Then, for every $x\in\R^d$
$$
B_{R^\ep_0}(x)\subseteq\{y\,:\,u(y,t_0)>u(x,t_0)-\ep\}.
$$
Therefore, $(u(x,t_0)-\ep)\chi_{B_{R^\ep_0}(x)}(\cdot)\le u(\cdot, t_0)$. 
Let $\overline R(t)$ be the solution to \eqref{superR} with $\overline\gamma=\gamma$ and $R_0=R^\ep_0$;
notice that
$\overline R(t)\ge \frac{R^\ep_0}{2}$ for every $t_0\le t\le t_0+\tau_\ep$ where $\tau_\ep:=\frac{R^\ep_0}{2\gamma(R^\ep_0/2)}$ if $\gamma(R^\ep_0/2)>0$ and $\tau_\ep=1$ otherwise.

By Lemma \ref{compball}, we deduce that
$$
u(x,t)\ge (u(x,t_0)-\ep)\chi_{B_{\frac{R^\ep_0}{2}}(x)}(x)=u(x,t_0)-\ep \qquad\textrm{for every }t_0\le t\le t_0+\tau_\ep.
$$
 The second inequality in \eqref{uctime} is a direct consequence of the first one, applied to the function $-u$, which, in view of Remark \ref{consS}, is a viscosity solution to \eqref{levelsetf} with initial datum $-u_0$.

%We aim at proving that there exists $\tau=\tau(\ep)>0$ such that 
%\begin{equation}\label{smallerball}
%B_{R_\ep}(x)\subseteq \{y\,:\,u(y,t')>u(x,t)-\ep\}\qquad\textrm{for every }t'>0\textrm{ with }|t-t'|<\tau.
%\end{equation}
%Notice that the property \eqref{smallerball}, it is enough to guarantee that the first inequality in \eqref{uctime} is satisfied. 

\end{proof}

%%%%%%%%%%%%%%%%%%%%%%%%%%
%%%%%%%%%%%%%%%%%%%%%%%%%%
%%%%%%%%%%%%%%%%%%%%%%%%%%
\section{Convergence of nonlocal curvature flows}\label{general}

In this section we consider a sequence $\{\H^n\}_{n\in\N}$ of nonlocal curvatures and introduce a notion of convergence of $\H^n$ to some limit curvature $\H^\infty$. 
In order to have uniform continuity estimates of the corresponding level set solutions, we enforce uniform bound on the velocity of blowing up and blowing down evolving balls;
%satisfying (M), (T), (C'). Moreover, we require that they satisfy also \eqref{defbarc0},  \eqref{defbarc} and (B) with uniform constants. 
more precisely, let $\uc^n$ and $\oc^n$
be defined as in  \eqref{defbarc0} and \eqref{defbarc}, with $\H$ replaced by $\H^n$, we set 
\begin{equation}\label{defucinfty}
\uc^{\inf} (\rho):= \inf_{n\in\N}  \uc^n (\rho), \qquad \oc^{\sup} (\rho):= \sup_{n\in\N}  \oc^n (\rho). 
\end{equation}
We assume that

\begin{itemize}
\item[(UB)] There exists $K\ge 0$ such that
$\uc^{\inf} (\rho) \ge -K\rho$ for all $\rho>1$, and  
$\oc^{\sup} (\rho)<+\infty$ for all $\rho>0$.  
\end{itemize}

%%%%%%%%%%%%%%%%%%%
\begin{definition} \label{curvconver}
Let $\{\H^n\}_{n\in\N}$ be a sequence of nonlocal curvatures and let $\H^\infty$ be a nonlocal curvature. We say that $\{\H^n\}_{n\in\N}$ converges to $\H^{\infty}$ and we write $\H^n\to \H^\infty$ whenever for every $\{E_n\}_{n\in\N}\subset\Reg$, $E\in\Reg$ with $E_n\to E$ in $\Reg$, and for every $x\in\partial E\cap\partial E_n$, it holds $\H^n(x,E_n)\to\H^{\infty}(x,E)$.
\end{definition}
%%%%%%%%%%%%%%%%%%%
Notice that, if $\{\H^n\}_{n\in\N}$ satisfies (UB) and $\H^n\to\H^{\infty}$, then $\H^{\infty}$ satisfies (B). Moreover, if $\H^n$ satisfy (S) for every $n\in\N$ and $\H^n\to\H^{\infty}$, then $\H^{\infty}$ satisfies (S).

\begin{theorem}\label{genthm}
Let $\{\H^n\}_{n\in\N}$ be a sequence of nonlocal curvatures  satisfying (C'), (S) and (UB) such that $\H^n\to\H^\infty$ for some  nonlocal curvature $\H^{\infty}$ satisfying  (C').

Let $u_0\in C(\R^d)$ be a uniformly continuous function with $u_0=C_0$ in $\R^d\setminus B_{R_0}$, for some $C_0,R_0\in\R$ with $R_0>0$. 
For every $n\in\N$ let $u^n$ be the viscosity solution to \eqref{levelsetf} with $\H$ replaced by $\H^n$.
%\begin{equation}\label{levelsetn}
%\begin{cases}
% \partial_t u(x,t) + |D u(x,t)| \H^n(x,  \{ y: \, u(y,t)\ge u(x,t)\}) \,=\,0 \\ 
% u(\cdot,0) = u_0(\cdot).
%\end{cases}
%\end{equation}
Then $u^n\to u^\infty$ where $u^\infty:\R^d\times [0,+\infty)\to\R$ is the (unique) viscosity solution to \eqref{levelsetf} with $\H$ replaced by $\H^\infty$.
%\begin{equation}\label{levelsetinfty}
%\begin{cases}
% \partial_t u(x,t) + |D u(x,t)| \H^\infty(x,  \{ y: \, u(y,t)\ge u(x,t)\}) \,=\,0 \\ 
% u(\cdot,0) = u_0(\cdot).
%\end{cases}
%\end{equation}
\end{theorem}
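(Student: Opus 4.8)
## Proof proposal

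The plan is to prove this by the standard \emph{stability / half-relaxed limits} method of viscosity solutions, adapted to the nonlocal level set setting of \cite{cmp}. The three ingredients are: (1) uniform $L^\infty$ and (space-time) equicontinuity bounds on the family $\{u^n\}$, coming from the comparison principle and the uniform barrier estimates; (2) a passage to the limit in the viscosity inequalities, using the convergence $\H^n\to\H^\infty$ of Definition \ref{curvconver}; and (3) the comparison principle for $\H^\infty$ (which holds since $\H^\infty$ satisfies (C') by hypothesis and (B) by the remark following Definition \ref{curvconver}) to identify the limit uniquely, which upgrades subsequential convergence to convergence of the whole sequence.

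\textbf{Step 1: Uniform bounds and compactness.} Since $\{\H^n\}$ satisfies (UB), there is a single $K\ge0$ with $\uc^n(\rho)\ge -K\rho$ for all $n$ and all $\rho>1$, so by Theorem \ref{exun} every $u^n$ satisfies $u^n(\cdot,t)=C_0$ outside $B_{R(t)}$ with $R(t)=R_0e^{Kt}$ independent of $n$; in particular $\|u^n\|_\infty\le\|u_0\|_\infty$ uniformly. Next, fix a nonincreasing $C^1$ function $\gamma$ with $\gamma\ge \oc^{\sup}$ (possible by the second clause of (UB)); then $\oc^n\le\gamma$ for every $n$. Let $\osp$ be a spatial modulus of continuity of $u_0$. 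By Proposition \ref{ucest}, applied with this common $\gamma$ and common $\osp$, there is a \emph{single} time modulus $\oti$ such that every $u^n$ has $\osp$ and $\oti$ as spatial and time moduli of continuity. Hence $\{u^n\}$ is bounded and equicontinuous on $\R^d\times[0,+\infty)$, uniformly supported (in the sense above) on each compact time interval, so by Arzel\`a--Ascoli a subsequence (not relabeled) converges locally uniformly to some $u^\infty\in C(\R^d\times[0,+\infty))$, which is constant outside a compact set on each time slice, satisfies $u^\infty(\cdot,0)=u_0$, and inherits the moduli $\osp,\oti$.

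\textbf{Step 2: The limit is a viscosity solution for $\H^\infty$.} I would show $u^\infty$ is a subsolution of \eqref{levelsetf} with $\H=\H^\infty$ (the supersolution case being symmetric, or following from Remark \ref{consS} since all the $\H^n$ and $\H^\infty$ satisfy (S)). Let $\ffi$ be admissible at $z=(x,t)$ with $u^\infty-\ffi$ having a strict maximum at $z$ (legitimate by Remark \ref{rm:strettocontatto}). By locally uniform convergence, there are points $z_n=(x_n,t_n)\to z$ where $u^n-\ffi$ attains a local maximum. Now split into the two cases of Definition \ref{defviscoC2}. If $\D\ffi(z)=0$: for the $n$ with $\D\ffi(z_n)=0$ the inequality $\ffi_t(z_n)\le0$ passes to the limit directly; for the $n$ with $\D\ffi(z_n)\neq 0$, the key point is that admissibility of $\ffi$ at $z$ (the bound with $f\in\F$, $\omega$, and \eqref{IS}/\eqref{ISvera}) forces the superlevel sets $\{\ffi(\cdot,t_n)\ge\ffi(z_n)\}$ to contain a ball of radius $\sim f^{-1}(c)$ around a point near $x_n$, and together with the monotonicity (M) and the uniform bound $\oc^n\le\gamma$ one controls $|\D\ffi(z_n)|\,\H^n(x_n,\{\ffi(\cdot,t_n)\ge\ffi(z_n)\})\to 0$ exactly as in the uniqueness argument of \cite{cmp}; hence $\ffi_t(z_n)\le -|\D\ffi(z_n)|\H^n(\cdots)\to 0$, giving $\ffi_t(z)\le 0$. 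If instead the level $\{\ffi(\cdot,t)=\ffi(z)\}$ is noncritical, i.e. $\D\ffi(z)\neq0$, then $\D\ffi(z_n)\neq0$ for $n$ large and the superlevel set $E_n:=\{y:\ffi(y,t_n)\ge\ffi(z_n)\}$ converges in $\Reg$ to $E:=\{y:\ffi(y,t)\ge\ffi(z)\}$ (this is the smooth implicit-function-theorem fact used in \cite{cmp} to make the level set curvature well defined), with $x_n\in\partial E_n$, $x\in\partial E$. By Definition \ref{curvconver}, $\H^n(x_n,E_n)\to\H^\infty(x,E)$; passing to the limit in $\ffi_t(z_n)+|\D\ffi(z_n)|\H^n(x_n,E_n)\le 0$ yields the required inequality for $\H^\infty$.

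\textbf{Step 3: Identification and full convergence.} Since $u^\infty$ (its USC and LSC envelopes coincide, being continuous) is both a sub- and a supersolution of \eqref{levelsetf} with $\H=\H^\infty$ and $u^\infty(\cdot,0)=u_0$, and since $\H^\infty$ satisfies (C') and (B), Theorem \ref{exun} identifies $u^\infty$ as \emph{the} unique viscosity solution. Because every subsequence of $\{u^n\}$ has a further subsequence converging locally uniformly to this same limit, the whole sequence $u^n\to u^\infty$ locally uniformly, which is the claim.

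\textbf{Main obstacle.} The routine parts are the compactness (Step 1) and the identification (Step 3). The delicate point is Step 2 in the degenerate case $\D\ffi(z)=0$: one must handle the approximating points $z_n$ at which $\D\ffi(z_n)$ may be nonzero, and show that the singular term $|\D\ffi(z_n)|\,\H^n(x_n,\{\ffi\ge\ffi(z_n)\})$ still vanishes in the limit. This requires combining the admissibility structure of $\ffi$ (the $f\in\F$ barrier and condition \eqref{IS}) with the \emph{uniform} bound $\oc^n\le\gamma$ from (UB) — it is precisely here that the uniformity built into Definition \ref{curvconver} and assumption (UB), rather than merely pointwise properties of each $\H^n$, is used. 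Making this estimate uniform in $n$ is the heart of the proof; everything else is the standard stability machinery.
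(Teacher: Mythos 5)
Your proposal follows the same overall architecture as the paper's proof: uniform moduli from (UB) and Proposition \ref{ucest} $\Rightarrow$ Arzel\`a--Ascoli compactness; stability of the viscosity inequalities using Definition \ref{curvconver}; and identification of the subsequential limit as the unique $\H^\infty$-solution, which upgrades to full convergence. Steps 1 and 3 are essentially exactly what is done in the paper.

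The issue is in Step 2, in the degenerate case $\D\ffi(z)=0$. As written, you take the maxima $z_n$ of $u^n-\ffi$ and, for those $n$ with $\D\ffi(z_n)\neq 0$, you try to invoke the subsolution inequality (ii) for $u^n$ against the test function $\ffi$ itself. But (ii) can only be applied when the whole level $\{\ffi(\cdot,t_n)=\ffi(z_n)\}$ is noncritical (so the superlevel set belongs to $\Reg$). Having $\D\ffi(z_n)\neq 0$ at the single point $z_n$ does not give that, and your sketch does not supply it; nor is there a reason the superlevel sets of a general admissible $\ffi$ near a critical value would ``contain a ball of radius $\sim f^{-1}(c)$'' in the clean way you indicate. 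The paper sidesteps this by \emph{replacing} $\ffi$ with the radial barrier dictated by the admissibility bound,
$$
\psi(x,t):=\ffi_t(\overline z)(t-\overline t)+2f(|x-\overline x|)+2\omega(|t-\overline t|),
$$
and looking at the maxima of $u^n-\psi$ instead. Because $\psi(\cdot,t)$ is radial about $\overline x$ and monotone in $|x-\overline x|$, its level sets are spheres and its superlevel sets are exteriors of balls; hence the level is automatically noncritical whenever $\D\psi(\overline z^n)\neq 0$, and (M) together with (UB) gives $\H^n(\overline x^n,\R^d\setminus B_{|\overline x^n-\overline x|}(\overline x))\ge -\gamma(|\overline x^n-\overline x|)$, so that $|\D\psi(\overline z^n)|\,\H^n(\cdots)\ge -2|f'(\rho)|\gamma(\rho)\to 0$ by \eqref{IS}. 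You gesture at this with ``as in the uniqueness argument of \cite{cmp}'', and you correctly identify this as the heart of the proof, but the passage to the auxiliary radial test function is not optional decoration — it is the device that makes the level sets admissible — and needs to be carried out explicitly rather than folded into a reference.
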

%%%%%%%%%%%%%%%%%%%%%%%%
%%%%%%%%%%%%%%%%%%%%%%%%
%%%%%%%%%%%%%%%%%%%%%%%%
%%%%%%%%%%%%%%%%%%%%%%%%
%%%%%%%%%%%%%%%%%%%%%%%%
%%%%%%%%%%%%%%%%%%%%%%%%
\begin{proof}
Notice that, under the assumption (UB), the function $\oc^{\sup}$ is finite and nonincreasing; therefore, there exists a nonincreasing $C^1$ function $\gamma:(0,+\infty)\to\R$ with $\gamma\ge \oc^{\sup}$.

Let $\overline R(t)$ be a solution to \eqref{superR} with $\overline\gamma=\gamma$, and  set $\underline R(t):=R_0 e^{K t}$ for every $t\ge 0$, with $K$ given by (UB).

By Theorem \ref{exun}, given $T>0$, the functions $u^n(x,t)\equiv C_0$ for all $0\le t\le T$ and for all $x\in \R^d \setminus B_{\underline R(T)}$.  
By Proposition \ref{ucest} and by Ascoli-Arzel\'a Theorem, we have that, up to a (not- relabeled) subsequence, $u^n\to u^\infty$ locally uniformly on $\R^d\times [0,+\infty)$, for some function $u^\infty$ with $u^\infty (\cdot,t)\equiv C_0$ in $\R^d \setminus B_{\underline R(t)}$ for every $t\ge 0$, and such that $u^\infty$ has the same spatial modulus of continuity $\osp$ of $u_0$ and time modulus of continuity $\oti$ given by Proposition \ref{ucest}.

It remains to show that $u^\infty$ is a viscosity solution to \eqref{levelsetf} with $\H$ replaced by $\H^\infty$.
We only prove that $u$ is a subsolution, since the proof that $u$ is a supersolution is identical, and we will verify Definition \ref{defviscoC2} employing test functions $\ffi$ which are admissible, according to Definition \ref{defadmissible}, with the choice of $\gamma$ done at the beginning of this proof.
% with $\gamma=\gamma^\infty$. 

Let $\overline z=(\overline x,\overline t)\in \R^d\times (0,+\infty)$ and let $\ffi$ be a test function which is admissible at $\overline z$, such that $u^\infty-\ffi$ has a maximum at $\overline z$. 

First we consider the case that the level $\{\ffi(\cdot,\overline t)= \ffi (\overline z)\}$ is noncritical.  In view of Remark \ref{rm:strettocontatto}, we can assume that the maximum at $\overline z$ is strict. Since $u^n\to u^\infty$ locally uniformly, for $n$ large enough, $u^n-\ffi$ admits a maximum at a point $\overline z^n=(\overline x^n,\overline t^n)$ for some $\overline z^n\to \overline z$.
Moreover, for $n$ large enough, the levels $\{\ffi(\cdot, \overline t^n)=\ffi(\overline z^n)\}$ are noncritical. Since $u^n$ are subsolutions, we have
$$
\partial_t \ffi(\overline z^n) + |\D \ffi(\overline z^n)| \H^n(\overline x_n,  \{\ffi(\cdot,\overline t^n)\ge \ffi(\overline z^n)\}) \le 0,
$$
which, letting $n\to +\infty$, in view of properties (C) and (T), yields (ii) of Definition \ref{defviscoC2}.

Assume now that $\D\ffi(\overline z)=0$; then
$$
|\ffi(x,t) - \ffi(\overline z) - \ffi_t(\overline z)(t-\hat t)|\le f(|x-\overline x|) + \omega(|t -\overline t|),
$$
with the functions $f$ and $\omega$ as in Definition \ref{defadmissible}.
Let us consider the admissible test function
$$
\psi(x,t):=\ffi_t(\overline z)(t-\overline t)+2f(|x-\overline x|)+2\omega(|t-\overline t|).
$$
It is easy to see that $u^n-\psi$ admits a strict maximum at some $\overline z^n=(\overline x^n,\overline t^n)$ with $\overline z^n\to\overline z$. If $\D\psi (\overline z^n)= 0$, then
\begin{equation}\label{flat}
0\ge \partial_t \psi(\overline z^n)=\ffi_t(\overline z)+2 \,\mathrm{sgn}(\overline t_n-\overline t)\,\omega'(|\overline t_n-\overline t|).
\end{equation}
If $\D\psi (\overline z^n)\neq 0$, then, since $\psi$ is radial, the level $\{\psi(\cdot,\overline t^n)=\psi(\overline z^n) \}$ is noncritical, and  
\begin{equation}\label{nonflat}
\begin{aligned}
0\ge& \partial_t \psi(\overline z^n)+|\D\psi (\overline z^n)|\H^n(\overline x^n, \{\psi(\cdot,\overline t^n)\ge \psi(\overline z^n) \})\\
=&\ffi_t(\overline z)+2 \,\mathrm{sgn}(\overline t_n-\overline t)\,\omega'(|\overline t_n-\overline t|)+2|f'(|\overline x^n-\overline x|)|\H^n(\overline x^n,\chi_{\R^d\setminus B_{|\overline x-\overline x^n|}(\overline x)})\\
\ge & \ffi_t(\overline z)+2 \,\mathrm{sgn}(\overline t_n-\overline t)\,\omega'(|\overline t_n-\overline t|)-2|f'(|\overline x^n-\overline x|)| \gamma(|\overline x-\overline x^n|).
\end{aligned}
\end{equation}
By \eqref{flat} and \eqref{nonflat}, letting $n\to +\infty$, recalling also \eqref{ISvera} and the fact that $\omega'(0)=0$, we have that $u^\infty$ satisfies condition (i) of  Definition \ref{defviscoC2}.
\end{proof}

\section{Convergence of  fractional mean curvature flows} \label{fraccurv}
Here we apply the general theory of Section \ref{general} to study the limit cases of $s$-fractional mean curvature flows as $s\to 0$ and $s\to 1$. 

\subsection{Fractional perimeters and curvatures}
Let $s\in (0,1)$. For every $E\in\Reg$
and for every  $x\in\partial E$, the \emph{$s$-fractional curvature} of $E$ at $x$ is defined \cite{i} by
\begin{equation}\label{scurv}
\hh^s(x, E)=   \lim_{r\to 0^+}  \int_{\Rd\setminus B_r(x)} \frac{\chi_{\R^d\setminus E}(y)-\chi_{E}(y)}{|x-y|^{d+s}}\ud y. 
\end{equation}

It is well known  that the curvature $\H^s$ is the first variation of the $s$-fractional perimeter $\P^s$
defined by
\begin{equation}\label{sper}
\P^s(E):=\int_{E}\int_{\Rd\setminus E}\frac{1}{|x-y|^{d+s}}\ud y\ud x.
\end{equation}

The following definition introduces the notion of \emph{$0$-fractional curvature}.

\begin{definition}[The zero curvature] 
Let $E\in\Reg$. 
We define the $0$-fractional curvature as 
\begin{equation}\label{zerocurv} \hh^0(x,E):=
\begin{cases} \displaystyle \lim_{R\to +\infty}\lim_{r\to 0^+}\int_{B_R(x)\setminus B_r(x)} \frac{\chi_{\Rd\setminus E}(y)-\chi_{E}(y)}{|x-y|^d}\ud y-d\omega_{d} \log R\,,
& E\subset\subset \R^d\,,
\\  \displaystyle \lim_{R\to +\infty}\lim_{r\to 0^+}\int_{B_R(x)\setminus B_r(x)} \frac{\chi_{\Rd\setminus E}(y)-\chi_{E}(y)}{|x-y|^d}\ud y+d\omega_d \log R\,, & \R^d\setminus E\subset\subset \R^d\,.\end{cases}\end{equation} 
\end{definition}

Notice that for every compact set $E\in \Reg$ we have $\hh^0(x,E) = \hh^0(x,\R^d\setminus \overset{\circ}{E})$ and, for all
$R>\dia(E)$,
\begin{equation}\label{noRtoin}
\hh^0(x,E)=\lim_{r\to 0^+}\int_{B_R(x)\setminus B_r(x)} \frac{\chi_{\Rd\setminus E}(y)-\chi_{E}(y)}{|x-y|^d}\ud y-d\omega_{d} \log R.
\end{equation}

Observe also that   $\hh^0(x, E)$ coincides with $L_\Delta\chi_E(x)$, up to a dimensional constant, where $L_\Delta$ is the logarithmic Laplacian  introduced in \cite{cw}. 

\begin{remark}
One can prove that, for $E$ compact,  $\hh^0(x,E)$ is the first variation of the $0$-fractional perimeter $\P^0$ introduced in \cite{dnp}, 
 and defined for all measurable sets $E$ with finite volume as
\begin{equation}\label{0per}
\P^0(E):=   \int_{E}\left[\int_{B_R(x) \setminus E}\frac{1}{|x-y|^{d}}\ud y-\int_{E\setminus B_R(x)}\frac{1}{|x-y|^{d}}\ud y-d\omega_d\log R\right]\ud x.
\end{equation} It is proved in \cite{dnp} that \eqref{0per} is independent of $R$. 
%We also notice that for all $s\in[0,1)$ the $s$-fractional curvatures introduced above are well defined for  all sets with compact boundary  of class $C^{1,1}$. This is a consequence of the two lemmas below (in particular of Lemma \ref{inou2}).
We also notice that for all $s\in[0,1)$ the $s$-fractional curvatures introduced above are well defined for  all sets with compact boundary  of class $C^{1,1}$. 
This is a consequence of the fact that if $B^\pm$ are interior and exterior tangent balls to $\partial E$ at $x$, then, due to the symmetry of the kernel, the integral contributions of $\hh^s(x,E)$ on $B^\pm$ cancel each other, while outside such balls the kernel is integrable.   This well known fact is the content of the two lemmas below.

\end{remark}

%Note that $H_0(x,E)=-H_0(\R^d\setminus E,x)$;

\begin{lemma}\label{inou}
Let $\delta>0$ and let $\overline s\in [0,1)$. Then, there exists an increasing continuous function $\omega_{\delta}^{\overline s}:[0,+\infty)\to \R$ with $\omega_{\delta}^{\overline s}(0)=0$ such that for every $s\in [0,\overline s]$ and for every $\eta\ge 0$ 
\begin{equation}\label{finally}
\int_{B_\eta\setminus (B_\delta(\delta e_d)\cup B_{\delta}(-\delta e_d))}\frac{1}{|y|^{d+s}}\ud y\le \omega^{\overline s}_\delta (\eta)\,,
\end{equation}
where $e_d$ denotes the $d$-th vector of the canonical basis of $\R^d$. 
\end{lemma}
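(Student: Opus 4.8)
The plan is to estimate the integral in \eqref{finally} by a direct geometric/measure-theoretic computation. The key point is that the region of integration, $B_\eta\setminus (B_\delta(\delta e_d)\cup B_\delta(-\delta e_d))$, stays a fixed distance away from the origin only in the directions transverse to $e_d$; along the $e_d$-axis the region does dip toward $0$, but the two tangent balls $B_\delta(\pm\delta e_d)$ carve out exactly a neighborhood of the axis that makes the kernel $|y|^{-d-s}$ integrable near $0$. Concretely, I would decompose the region into a small part $B_{2\delta}\setminus(B_\delta(\delta e_d)\cup B_\delta(-\delta e_d))$ and an annular part $B_\eta\setminus B_{2\delta}$ (assuming $\eta\ge 2\delta$; otherwise only the first part occurs, intersected with $B_\eta$). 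On the annular part the kernel is bounded by $(2\delta)^{-d-s}\le (2\delta)^{-d-1}$, so that piece contributes at most $c_d\,(2\delta)^{-d-1}\,\eta^d$, which is an increasing continuous function of $\eta$ vanishing at $\eta=0$ and uniform in $s\in[0,\overline s]$.

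The heart of the matter is the near-origin piece. First I would pass to polar-type coordinates adapted to the axis: write $y=(y',y_d)$ with $y'\in\R^{d-1}$, and note that the complement of the two balls near $0$ is contained in a ``bowtie'' region of the form $\{|y'|\ge c\,|y_d|\}$ for a suitable dimensional constant $c>0$ (this comes from the elementary fact that $y\notin B_\delta(\delta e_d)$ for $|y|$ small forces $|y|^2\ge 2\delta y_d$, i.e. essentially $|y'|^2\gtrsim \delta|y_d|$; I will need to check the precise cone opening, but a crude cone suffices). On such a set $|y|\ge c'|y'|$, so
$$
\int_{B_{2\delta}\cap\{|y'|\ge c|y_d|\}}\frac{1}{|y|^{d+s}}\ud y
\ \le\ C\int_{|y'|\le 2\delta}\frac{1}{|y'|^{d+s}}\Big(\int_{|y_d|\le |y'|/c}\ud y_d\Big)\ud y'
\ \le\ C'\int_{|y'|\le 2\delta}\frac{1}{|y'|^{d-1+s}}\ud y'\ <\ +\infty,
$$
and the last integral is finite because $d-1+s<d-1+1=d$ gives an integrable singularity in $\R^{d-1}$; moreover it is bounded uniformly for $s\in[0,\overline s]$ by its value at $s=\overline s$. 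This bound is independent of $\eta$ (it is simply the contribution of the fixed region near $0$), so adding it to the annular estimate gives a bound of the form $C_1(\delta,\overline s)+C_2(\delta,\overline s)\,\eta^d$; replacing it by, say, $\omega_\delta^{\overline s}(\eta):= C_2\,\eta^d + C_1\,\min\{1,\eta\}$ or simply absorbing constants, one obtains an increasing continuous function with $\omega_\delta^{\overline s}(0)=0$ dominating the left-hand side for all $\eta\ge 0$ — note that for $\eta\to 0$ the integration domain shrinks to a set of measure zero, so the genuine left-hand side does tend to $0$, and it is harmless to enlarge the bounding function so that it is continuous and vanishes at $0$ while still dominating.

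The main obstacle I anticipate is bookkeeping rather than conceptual: pinning down the correct cone condition $\{|y'|\ge c|y_d|\}$ describing the complement of the two tangent balls near the origin, and making sure all constants are uniform in $s\in[0,\overline s]$ (for which the monotonicity $|y|^{-d-s}\le \max\{|y|^{-d},|y|^{-d-\overline s}\}$ split according to $|y|\le 1$ or $|y|\ge 1$ does the job). One small subtlety: the statement allows $s=0$, in which case $|y|^{-d-s}=|y|^{-d}$ and the same cone estimate still gives a finite integral over $\{|y'|\le 2\delta\}$ since the exponent $d-1+0=d-1<d$; so no special case is needed. Continuity and monotonicity of $\omega_\delta^{\overline s}$ are then immediate from the explicit polynomial-type form, completing the proof.
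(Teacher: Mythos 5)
The paper does not actually give a proof of this lemma — it is stated (together with Lemma \ref{inou2}) as a ``well known fact'' whose point is that outside the two tangent balls the kernel is integrable near the base point. So the comparison must be against what a correct proof would look like, and here your argument contains a genuine error.

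The mistake is in the near-origin estimate. Enlarging the complement of the two tangent balls to the cone $\{|y'|\ge c|y_d|\}$ is too crude: the integral of $|y|^{-d-s}$ over $B_{2\delta}\cap\{|y'|\ge c|y_d|\}$ is \emph{infinite} for every $s\in[0,1)$. Your last step asserts that $\int_{|y'|\le 2\delta}|y'|^{-(d-1+s)}\,\ud y'$ converges in $\R^{d-1}$ because $d-1+s<d$; but the relevant dimension is $d-1$, not $d$, so the convergence criterion is $d-1+s<d-1$, i.e.\ $s<0$, which never holds here. In polar coordinates in $\R^{d-1}$ the integral reads $\int_0^{2\delta} r^{-1-s}\,\ud r$, divergent even at $s=0$. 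So the cone estimate cannot be salvaged.

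The repair is to keep the true (parabolic, not conical) geometry of the excluded balls. For $|y|<2\delta$ one has $y\notin B_\delta(\pm\delta e_d)$ if and only if $|y|^2\ge 2\delta|y_d|$, i.e.\ the region is the cusp $\{|y_d|\le |y|^2/(2\delta)\}$. In spherical coordinates $y=r\omega$ this translates into $|\omega_d|\le r/(2\delta)$, a spherical cap of $\mathcal H^{d-1}$-measure $O(r/\delta)$, so
\[
\int_{B_{2\delta}\setminus(B_\delta(\delta e_d)\cup B_\delta(-\delta e_d))}\frac{\ud y}{|y|^{d+s}}
\le C\int_0^{2\delta} r^{d-1}\cdot\frac{r}{\delta}\cdot r^{-(d+s)}\,\ud r
= \frac{C}{\delta}\int_0^{2\delta} r^{-s}\,\ud r=\frac{C\,(2\delta)^{1-s}}{\delta(1-s)}\,,
\]
which is finite and bounded uniformly for $s\in[0,\overline s]$ by $\frac{C}{\delta(1-\overline s)}\max\{1,(2\delta)^{1-\overline s}\}$. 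Put differently, the thickness of the admissible slab at distance $|y'|$ from the axis is of order $|y'|^2/\delta$, not $|y'|/c$, and it is exactly that extra power of $|y'|$ that saves integrability. Your treatment of the annular piece $B_\eta\setminus B_{2\delta}$, and the subsequent assembly of an increasing continuous $\omega_\delta^{\overline s}$ vanishing at $0$ (using monotone convergence as $\eta\to 0^+$ to justify the vanishing), are fine.
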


\begin{lemma}\label{inou2}
Let $E\in\Reg$ and let  $\delta>0$ be such that $B_\delta(-\delta e_d )\subseteq E$ and $B_\delta( \delta e_d)\subseteq\Rd\setminus E$. Then, for every $s\in [0,1)$ and for every $\eta>0$, it holds
\begin{equation}\label{uni}
\lim_{r\to 0^+}  \int_{B_{\eta} \setminus B_r} \frac{\chi_{\R^d\setminus   E}(y)-\chi_{ E}(y)}{| y|^{d+s}}\ud y= \int_{B_{\eta} \setminus (B_\delta(-\delta e_d)\cup  B_\delta(\delta e_d)) } \frac{\chi_{\R^d\setminus E}(y)-\chi_{ E}(y)}{| y|^{d+s}}\ud y.
\end{equation} 
\end{lemma}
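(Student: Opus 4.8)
The plan is to prove Lemma~\ref{inou2} by reducing the principal-value integral near the origin to the geometrically obvious cancellation coming from the two tangent balls, and then invoking Lemma~\ref{inou} to control the remaining error. First I would fix $\eta>0$ and, for $0<r<\delta$, decompose the region $B_\eta\setminus B_r$ into three pieces: the symmetric difference $B_\delta(-\delta e_d)\cup B_\delta(\delta e_d)$ (intersected with $B_\eta\setminus B_r$), and its complement inside $B_\eta\setminus B_r$. On the complement the integrand $\bigl(\chi_{\R^d\setminus E}(y)-\chi_E(y)\bigr)|y|^{-d-s}$ is bounded in absolute value by $|y|^{-d-s}$, but since we have removed a full neighborhood of $0$ in the $e_d$-directions only — not all of $B_r$ — I would instead bound it by the integrable tail controlled by $\omega_\delta^{\overline s}$ from Lemma~\ref{inou} (taking $\overline s$ to be any value in $(s,1)$, or $s$ itself). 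The key point is that $B_r\subset B_\delta(-\delta e_d)\cup B_\delta(\delta e_d)$ for $r<2\delta$ small (since both balls are tangent to the origin), so for such $r$ the complement of the two balls inside $B_\eta\setminus B_r$ equals $B_\eta\setminus\bigl(B_\delta(-\delta e_d)\cup B_\delta(\delta e_d)\bigr)$, independently of $r$; hence this piece contributes exactly the right-hand side of \eqref{uni} with no limiting needed.

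Next I would treat the contribution of the two balls themselves. Write $B^- := B_\delta(-\delta e_d)$ and $B^+ := B_\delta(\delta e_d)$; by hypothesis $B^-\subseteq E$ and $B^+\subseteq \R^d\setminus E$, so on $B^-$ the integrand equals $-|y|^{-d-s}$ and on $B^+$ it equals $+|y|^{-d-s}$. The map $y\mapsto -y$ sends $B^+$ to $B^-$ and preserves $|y|$, hence
\begin{equation*}
\int_{(B^+\setminus B_r)} \frac{1}{|y|^{d+s}}\ud y = \int_{(B^-\setminus B_r)} \frac{1}{|y|^{d+s}}\ud y,
\end{equation*}
using that $B_r$ is itself symmetric under $y\mapsto -y$. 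Therefore the combined contribution of $B^+\cup B^-$ to $\int_{B_\eta\setminus B_r}$ is
\begin{equation*}
\int_{(B^+\setminus B_r)}\frac{1}{|y|^{d+s}}\ud y - \int_{(B^-\setminus B_r)}\frac{1}{|y|^{d+s}}\ud y = 0
\end{equation*}
for every $r\in(0,2\delta)$; so this piece vanishes identically and contributes nothing in the limit. Assembling the two pieces gives that $\int_{B_\eta\setminus B_r}\bigl(\chi_{\R^d\setminus E}-\chi_E\bigr)|y|^{-d-s}\ud y$ is in fact equal to the right-hand side of \eqref{uni} for all sufficiently small $r$, so the limit as $r\to 0^+$ is trivial.

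The only subtlety to nail down is the containment $B_r\subset B^+\cup B^-$ for small $r$, which I would justify by the tangency: both balls pass through the origin with $0\in\partial B^\pm$, and near $0$ the set $B^+\cup B^-$ contains a full double-cone around the $e_d$-axis whose complement in a small ball has the $|y|^{-d-s}$-integrability captured by Lemma~\ref{inou}; more simply, one checks directly that for $|y|<r$ small one has $y\in B^+$ whenever $y_d>0$ is the dominant coordinate and $y\in B^-$ whenever $y_d<0$, but in fact the cleanest route is: a point $y$ with $|y|$ small lies in $B_\delta(\delta e_d)$ iff $|y-\delta e_d|<\delta$ iff $|y|^2 < 2\delta y_d$, so $B_r\setminus(B^+\cup B^-)$ is exactly $\{|y|<r : |y|^2\ge 2\delta|y_d|\}$, a set on which the integral of $|y|^{-d-s}$ tends to $0$ as $r\to0$ by Lemma~\ref{inou}. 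Thus even if the exact containment fails, the error term on $B_r\setminus(B^+\cup B^-)$ is $o(1)$, and combined with the exact cancellation on $B^+\cup B^-\setminus B_r$ and the $r$-independent main term, the limit \eqref{uni} follows. I expect the main obstacle to be merely bookkeeping: keeping the regions disjoint and correctly oriented, and making sure the symmetry argument for the two balls is applied on sets that are genuinely symmetric under $y\mapsto-y$ (which forces the use of the ball $B_r$ rather than an off-center excision).
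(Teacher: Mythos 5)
Your proof is correct and follows the approach the paper itself indicates in the remark preceding the two lemmas (cancellation on the tangent balls by the symmetry $y\mapsto -y$, integrability of the kernel outside them via Lemma~\ref{inou}). One remark on presentation: the containment $B_r \subset B^+\cup B^-$ you assert midway is false --- two balls tangent at the origin do not cover any neighborhood of it, as you yourself compute that $B_r\setminus(B^+\cup B^-)=\{|y|<r: |y|^2\ge 2\delta|y_d|\}$ is nonempty for every $r>0$ --- but you notice this and correctly repair it at the end with the bound $\int_{B_r\setminus(B^+\cup B^-)}|y|^{-d-s}\ud y\le\omega_\delta^{\overline s}(r)\to 0$ from Lemma~\ref{inou}; in a clean write-up you should drop the erroneous containment claim and lead directly with this estimate.
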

%%%%%%%%%%%%%%%%%%%%%%
%%%%%%%%%%%%%%%%%%%%%%
%%%%%%%%%%%%%%%%%%%%%%
\begin{proposition}\label{properties}
For every $s\in[0,1)$, the functionals $\hh^s$ satisfy the properties (M), (T), (C'), (B), and (S) in Subsection \ref{axioms}.
\end{proposition}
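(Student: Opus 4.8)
The goal is to verify that for each $s\in[0,1)$ the functional $\hh^s$ satisfies (M), (T), (C'), (B), (S). I would organize the proof by treating the easy properties first, then reducing all the delicate estimates to the two lemmas above.

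\emph{Monotonicity (M) and translational invariance (T).} These are immediate from the integral representation \eqref{scurv} (and \eqref{zerocurv}, \eqref{noRtoin} for $s=0$). If $E\subseteq F$ and $x\in\partial E\cap\partial F$, then $\chi_{\R^d\setminus F}-\chi_F\le \chi_{\R^d\setminus E}-\chi_E$ pointwise, and the kernel $|x-y|^{-(d+s)}$ is positive; integrating over $B_R(x)\setminus B_r(x)$ and subtracting the common constant $\pm d\omega_d\log R$ (which cancels when taking the difference, for $s=0$) gives $\hh^s(x,F)\le\hh^s(x,E)$ after passing to the limits. Here one should note that the principal-value limit as $r\to0^+$ exists because $x$ lies on a $C^{1,1}$ (indeed $C^{\ell,\beta}$, $\ell\ge2$) boundary, which by Lemma \ref{inou2} reduces the singular part to an absolutely convergent integral outside the two tangent balls. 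Translational invariance (T) is a change of variables $y\mapsto y+z$ in the defining integrals.

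\emph{Symmetry (S) and lower bound (B).} Property (S) follows since replacing $E$ by $\R^d\setminus\overset{\circ}{E}$ swaps $\chi_E$ and $\chi_{\R^d\setminus E}$, hence flips the sign of the integrand; for $s=0$ one uses the identity $\hh^0(x,E)=\hh^0(x,\R^d\setminus\overset{\circ}{E})$ together with the fact that the $\log R$ correction changes sign between the two branches of \eqref{zerocurv}, as already recorded right after the definition. For (B), I would compute $\uc(\rho)$ explicitly on balls: by (S), $\uc(\rho)=\min_{x\in\partial B_\rho}\hh^s(x,\overline B_\rho)$, and by scaling $\hh^s(x,\overline B_\rho)=\rho^{-s}\hh^s(\rho^{-1}x,\overline B_1)$ for $s\in(0,1)$ (with the analogous logarithmic scaling $\hh^0(x,\overline B_\rho)=\hh^0(\rho^{-1}x,\overline B_1)-d\omega_d\log\rho$ for $s=0$). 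In either case $\uc(\rho)$ is bounded below by a constant (for $s\in(0,1)$, since $\rho^{-s}$ is bounded for $\rho\ge1$) or grows like $-d\omega_d\log\rho\ge -K\rho$ for $\rho\ge1$ (for $s=0$); either way \eqref{lwrbdkappa} holds.

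\emph{Continuity (C) and uniform continuity (C').} This is the substantive part. Given $E$ with interior and exterior ball conditions of radius $\rr$ at $x$ and a diffeomorphism $\Phi$ of class $C^{\regu}$ equal to the identity outside the unit ball around $x$, I would decompose $\hh^s(x,E)-\hh^s(\Phi(x),\Phi(E))$ into: (1) the contribution near $x$ (inside, say, $B_\rho(x)$ with $\rho$ small), which by Lemma \ref{inou2} equals an integral over a region avoiding the two tangent balls of radius comparable to $\rr$, and is therefore controlled by $\omega^{\overline s}_{c\rr}(\rho)$ via Lemma \ref{inou} --- uniformly in $s\in[0,\overline s]$; (2) the far contribution outside $B_1(x)$, where the kernel is bounded and smooth, so the difference is estimated by $\|\Phi-\mathrm{Id}\|_{C^0}$ times a constant depending on the diameter of $E$ and $d$; (3) the intermediate annulus $B_1(x)\setminus B_\rho(x)$, where again the kernel is bounded and one performs the change of variables $y=\Phi(z)$, picking up Jacobian factors $|\det D\Phi|$ and the distortion of $|x-y|$ versus $|\Phi(x)-\Phi(z)|$, all controlled by $\|\Phi-\mathrm{Id}\|_{C^1}$. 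Combining, one first chooses $\rho$ small to make (1) below $\e/2$, then chooses $\|\Phi-\mathrm{Id}\|_{C^{\regu}}$ small to make (2)+(3) below $\e/2$; this produces the modulus $\omega_\rr$. Plain continuity (C) is a special case of this argument (take $\Phi=\Phi_n\to\mathrm{Id}$ and note the interior/exterior ball radii at a fixed $x\in\partial E_n\cap\partial E$ are eventually uniformly bounded below since $E_n=\Phi_n(E)$ with $\Phi_n\to\mathrm{Id}$ in $C^{\ell,\beta}$, $\ell\ge2$).

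\emph{Main obstacle.} The crux is making the near-$x$ estimate genuinely \emph{uniform in $s\in[0,1)$}, including the degenerate endpoint $s=0$ where one must carry the $\log R$ renormalization through the diffeomorphism change of variables without spoiling cancellation; Lemma \ref{inou} is designed precisely to supply a modulus $\omega^{\overline s}_\delta$ that is uniform for $s\in[0,\overline s]$, so the real work is to check that the $s=0$ logarithmic correction terms on $E$ and on $\Phi(E)$ differ by something controlled by $\|\Phi-\mathrm{Id}\|_{C^0}$ (they do, since $\mathrm{diam}(\Phi(E))$ is close to $\mathrm{diam}(E)$ and $\log$ is locally Lipschitz away from $0$). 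Everything else is a routine — if bookkeeping-heavy — splitting argument.
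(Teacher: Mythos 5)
Your proposal is correct and follows essentially the same route as the paper: (M), (T), (S) read off the integral representation; (B) via scaling (with the observation that $\hh^0(\rho x,\overline B_\rho)=\hh^0(x,\overline B_1)-d\omega_d\log\rho$); and (C') by splitting the difference $\hh^s(x,E)-\hh^s(\Phi(x),\Phi(E))$ into a near-singularity piece controlled via Lemmas \ref{inou} and \ref{inou2} using the interior/exterior ball condition, and a bounded-kernel piece controlled by $\|\Phi-\mathrm{Id}\|_{C^1}$. One small correction to your ``main obstacle'' paragraph: the $\log R$ renormalization needs no delicate tracking at all, because by \eqref{noRtoin} the quantity $\hh^0(x,E)$ is independent of $R$ once $R>\dia(E)$, so you may fix a single $R>\max\{\dia(E),\dia(\Phi(E)),1\}$ and the $-d\omega_d\log R$ terms cancel identically in the difference; the Lipschitzness of $\log$ and closeness of diameters is not what is used. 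Also, in your step (3) the change of variables produces not only kernel and Jacobian distortion but a region mismatch (e.g.\ $\Phi(B_R\setminus B_\rho)$ versus $B_R(\widetilde x)\setminus B_\rho(\widetilde x)$ near the inner sphere), which the paper bounds separately via Lemma \ref{inou} applied on $B_{2\eta}(\widetilde x)$; your write-up glosses over this, but it is handled by exactly the lemma you already invoke.
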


\begin{proof}
The properties (M), (T) and (S) are easy consequences of the very definitions of the fractional curvatures. 
Concerning property (B), it is trivially satisfied by $\hh^s$ for positive $s$, since in this case the fractional curvature of any ball is always positive. For $s=0$, we first notice that
$\hh^0$ satisfies the following scaling property: for every compact set $E\in\Reg$ we have
\begin{equation}\label{resc} 
\hh^0(\lambda x, \lambda E)=\hh^0(x, E )-d\omega_{d} \log \lambda\qquad\textrm{for all }\lambda>0, x\in\partial E;
\end{equation}
%Now,  set $c_d:= \hh^0(x,B_1)$ and notice that it is positive. 
%By \eqref{resc}
therefore, we deduce that 
  \begin{equation}\label{ball}
  \hh^0(\rho x,\overline B_\rho)= \hh^0(x,\overline B_1)-d\omega_{d} \log \rho\qquad\textrm{for all }\rho>0, x\in\partial B_1,
  \end{equation}  
which yields property (B) also for $s=0$. 

It remains to show that property (C') holds. We prove it only for $s=0$, being the case $s\in (0,1)$ fully analogous and claimed in \cite[Remark 5.2]{cmp}.
In view of property (S) it is enough to prove the claim only for compact sets $E\in\Reg$.
Let $\rr>0$, and let $E\in\Reg$, $x\in\partial E$ be such that $E$ has both an interior
and exterior ball condition of radius $\rr$ at $x$. We denote by $B_\rr^{\inn}$ and $B_\rr^{\ou}$ the interior and exterior tangent balls  to $\partial E$ at $x$, respectively.
Let $\Phi:\R^d\to \R^d$ be a diffeomorphism of class {$C^{\regu}$},
with $\Phi(y)=y$ for $|y-x|\ge 1$.
We set $\widetilde E:=\Phi(E)$ and we notice that for $\|\Phi - \mathrm{Id}\|_{C^{\regu}}$ small enough $\widetilde E$ satisfies interior and exterior ball condition at $\widetilde x:=\Phi (x)$ with radius $\frac \rr 2$. We denote such tangent balls with   $\widetilde B_{\frac{\rr}{2}}^{\inn}$ and $\widetilde B_{\frac \rr 2}^{\ou}$.

Let $R> \max\{ \dia(E),1 \} $. Notice that, if $\|\Phi - \mathrm{Id}\|_{C^{\regu}}$ is small enough, then, also $\dia(\widetilde E)<R$. 
Moreover, by applying Lemma \ref{inou} and Lemma \ref{inou2} with $\delta=\frac{\rr}{2}$, for every $\eta>0$ we have that
\begin{equation}\label{menouno}
\begin{aligned}
\left|\lim_{r\to 0^+}  \int_{B_{\eta}(\widetilde x) \setminus B_r(\widetilde x)} \frac{\chi_{\R^d\setminus  \widetilde E}(y)-\chi_{ \widetilde E}(y)}{|\widetilde x- y|^{d}}\ud y\right|&\\
+\left|\lim_{r\to 0^+}
\int_{B_{\eta}(x) \setminus B_r(x)} \frac{\chi_{\R^d\setminus   E}(y)-\chi_{ E}(y)}{|x- y|^{d}}\ud y\right|&\le 4\omega^0_{\frac\rr 2}(\eta).
\end{aligned}
\end{equation}
Let $\ep>0$ and let $\eta_\ep$ be such that $4\omega^0_{\frac\rr 2}(\eta_\ep)\le \frac\ep 3$ and $\omega^0_{\frac\rr 2}(2\eta_\ep)\le \frac\ep 3$.

%Therefore, for every $\ep>0$ there exists $\eta_\ep>0$ 
%such that  $4\omega^0_{\frac\rr 2}(\eta_\ep)\le \frac\ep 3$ and 
By \eqref{menouno}, we have
\begin{equation} \label{zero}
 \begin{aligned}
&|\hh^0(x,E) - \hh^0(\Phi(x),\Phi(E))|
\\
\le&
\left|\int_{B_R(x)\setminus B_{\eta_\ep}(x)}\frac{\chi_{\Rd\setminus E}(y)-\chi_{E}(y)}{|x-y|^d}\ud y - \int_{B_R(\widetilde x)\setminus B_{\eta_\ep}(\widetilde x)}\frac{\chi_{\Rd\setminus \widetilde E}(y)-\chi_{\widetilde E}(y)}{|\widetilde x-y|^d}\ud y\right|+\frac{\ep}{3}\\
\le & \left|\int_{B_R(x)\setminus B_{\eta_\ep}(x)}\frac{\chi_{\Rd\setminus E}(y)-\chi_{E}(y)}{|x-y|^d}\ud y - \int_{\Phi(B_R(x)\setminus B_{\eta_\ep}(x))}\frac{\chi_{\Rd\setminus \widetilde E}(y)-\chi_{\widetilde E}(y)}{|\widetilde x-y|^d}\ud y\right|\\
&
+\left|\int_{\Phi(B_R(x)\setminus B_{\eta_\ep}(x))}\frac{\chi_{\Rd\setminus \widetilde E}(y)-\chi_{\widetilde E}(y)}{|\widetilde x-y|^d}\ud y- \int_{B_R(\widetilde x)\setminus B_{\eta_\ep}(\widetilde x)}\frac{\chi_{\Rd\setminus \widetilde E}(y)-\chi_{\widetilde E}(y)}{|\widetilde x-y|^d}\ud y\right| +\frac \ep 3,
%+ \int_{\Phi(B_{\eta_\ep}(x))\Delta  B_{\eta_\ep}(x)}  \frac{|\chi_{\Rd\setminus \widetilde E}(y)-\chi_{\widetilde E}(y)|}{|\widetilde x-y|^d}\ud y +\ep
\end{aligned}
\end{equation}
where in the last step we have used triangular inequality.

On the one hand, using that $\Phi=\mathrm{Id}$ outside from $B_1(x)$, we have
\begin{equation}\label{uno}
\begin{aligned}
&\left|\int_{B_R(x)\setminus B_{\eta_\ep}(x)}\frac{\chi_{\Rd\setminus E}(y)-\chi_{E}(y)}{|x-y|^d}\ud y - \int_{\Phi(B_R(x)\setminus B_{\eta_\ep}(x))}\frac{\chi_{\Rd\setminus \widetilde E}(y)-\chi_{\widetilde E}(y)}{|\widetilde x-y|^d}\ud y\right|\\
=& \left|\int_{B_1(x)\setminus B_{\eta_\ep}(x)}\frac{\chi_{\Rd\setminus E}(y)-\chi_{E}(y)}{|x-y|^d}\ud y - \int_{\Phi(B_1(x)\setminus B_{\eta_\ep}(x))}\frac{\chi_{\Rd\setminus \widetilde E}(y)-\chi_{\widetilde E}(y)}{|\widetilde x-y|^d}\ud y\right|\\ 
\le& \frac{\omega_d}{\eta_\ep^d}\|\Phi-\mathrm{Id}\|_{C^1};
\end{aligned}
\end{equation}
on the other hand, for $ \|\Phi-\mathrm{Id}\|_{C^1}$ small enough, 
$$
\Big(\Phi(B_R(x)\setminus B_{\eta_\ep}(x))\Big)\Delta\Big( B_R(\widetilde x)\setminus B_{\eta_\ep}(\widetilde x)\Big)\subset B_{2\eta_\ep}(\widetilde x)\setminus (\widetilde B_{\frac \rr 2}^{\inn}\cup \widetilde B_{\frac \rr 2}^{\ou}),
$$
so that, by Lemma \ref{inou}, we have
\begin{equation}\label{due}
\begin{aligned}
&\left|\int_{\Phi(B_R(x)\setminus B_{\eta_\ep}(x))}\frac{\chi_{\Rd\setminus \widetilde E}(y)-\chi_{\widetilde E}(y)}{|\widetilde x-y|^d}\ud y- \int_{B_R(\widetilde x)\setminus B_{\eta_\ep}(\widetilde x)}\frac{\chi_{\Rd\setminus \widetilde E}(y)-\chi_{\widetilde E}(y)}{|\widetilde x-y|^d}\ud y\right|\\
\le&\int_{B_{2\eta_\ep}(\widetilde x)\setminus (\widetilde B_{\frac \rr 2}^{\inn}\cup \widetilde B_{\frac \rr 2}^{\ou})}\frac{1}{|\widetilde x-y|^d}\ud y\\
%\le &\left|\int_{\Phi(B_{2\eta_\ep}(x)\setminus B_{\eta_\ep}(x))}\frac{\chi_{\Rd\setminus \widetilde E}(y)-\chi_{\widetilde E}(y)}{|\widetilde x-y|^d}\ud y\right|+\left|\int_{B_{2\eta_\ep}(\widetilde x)\setminus B_{\eta_\ep}(\widetilde x)}\frac{\chi_{\Rd\setminus \widetilde E}(y)-\chi_{\widetilde E}(y)}{|\widetilde x-y|^d}\ud y\right|\\
\le&\omega^0_{\frac{\rr}{2}}(2\eta_\ep)\le\frac{\ep}{3}.
\end{aligned}
\end{equation}
In view of \eqref{zero}, \eqref{uno} and \eqref{due}, there exists $\tau=\tau_\ep>0$ such that
$$
|\hh^0(x,E) - \hh^0(\Phi(x),\Phi(E))|\le \ep\qquad\textrm{ for }\|\Phi-\mathrm{Id}\|_{C^1}\le \tau.
$$
\end{proof}
%%%%%%%%%%%%%%%%%%%%%%%%%
%%%%%%%%%%%%%%%%%%%%%%%%%
%%%%%%%%%%%%%%%%%%%%%%%%%
Existence and uniqueness of  $s$-fractional mean curvature flows for $s\in (0,1)$ has been established in \cite{i} (see also \cite{cmp}). The following result, which is a direct consequence of Theorem \ref{exun} and of Proposition \ref{properties}, establishes existence and uniqueness also for the $0$-fractional mean curvature flow.

\begin{theorem}\label{exunspos}
For every $s\in [0,1)$ and for every uniformly continuous function $u_0\in C(\R^d)$ constant outside a compact set, there exists a unique viscosity solution to \eqref{levelsetf} with $\H$ replaced by $\hh^s$.
\end{theorem}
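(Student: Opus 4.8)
The plan is to obtain the statement as an immediate application of the abstract existence and uniqueness theorem, Theorem~\ref{exun}, once we have checked that for each fixed $s\in[0,1)$ the functional $\hh^s$ falls within the axiomatic framework of Subsection~\ref{axioms}. Precisely, it suffices to verify that $\hh^s$ is a nonlocal curvature in the sense of Definition~\ref{defcu} and that it satisfies assumptions (B) and (C'); this is exactly the content of Proposition~\ref{properties}. (For $s\in(0,1)$ existence and uniqueness are already available from \cite{i,cmp}, so the genuinely new case is $s=0$.)

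First I would record what Proposition~\ref{properties} provides: for every $s\in[0,1)$ the curvature $\hh^s$ satisfies (M), (T), (C'), (B) and (S). Two of the three defining axioms of a nonlocal curvature, namely (M) and (T), are among these; the remaining one, the continuity axiom (C), is deduced from the uniform continuity property (C') by a standard localization. Indeed, if $E_n\to E$ in $\Reg$, write $E_n=\Phi_n(E)$ with $\Phi_n\to\mathrm{Id}$ in $C^{\regu}$, fix $x\in\partial E_n\cap\partial E$, and replace $\Phi_n$ by a diffeomorphism $\widetilde\Phi_n$ coinciding with $\Phi_n$ in a neighborhood of $x$ and with the identity outside $B_1(x)$; for $n$ large this can be done keeping $\|\widetilde\Phi_n-\mathrm{Id}\|_{C^{\regu}}\to 0$. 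Since $\partial E$ is compact and of class $C^{\regu}$, $E$ satisfies interior and exterior ball conditions of some radius $\rr>0$ at $x$, so (C') applied to $\widetilde\Phi_n$ yields $\hh^s(x,E_n)\to\hh^s(x,E)$. Hence $\hh^s$ is a nonlocal curvature.

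The second step is the actual invocation of Theorem~\ref{exun}. Since $\hh^s$ satisfies (B), that theorem produces, for the given initial datum $u_0$, a viscosity solution $u$ of \eqref{levelsetf} with $\H$ replaced by $\hh^s$, which in addition is spatially constant outside the ball of radius $R_0 e^{Kt}$; and since $\hh^s$ also satisfies (C'), the comparison principle of Theorem~\ref{th:CP} applies, forcing this solution to be unique. This completes the argument.

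It should be stressed that the substantive work is already behind us: the delicate point is the verification of (C') in the singular case $s=0$, which rests on the cancellation of the kernel on the interior and exterior tangent balls --- that is, on Lemmas~\ref{inou} and \ref{inou2} --- and this has been carried out in Proposition~\ref{properties}. The only point here still requiring a short justification is the implication (C')$\Rightarrow$(C) sketched above, which is entirely routine; there is no further obstacle.
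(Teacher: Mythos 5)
Your proposal follows the paper exactly: the theorem is obtained by combining Theorem~\ref{exun} (abstract existence and uniqueness) with Proposition~\ref{properties} (verification of the axioms for $\hh^s$, $s\in[0,1)$), and the paper presents it precisely as ``a direct consequence'' of these two. So the main structure is correct and identical.

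One small imprecision appears in your side-remark that (C')~$\Rightarrow$~(C). After replacing $\Phi_n$ by the compactly supported $\widetilde\Phi_n$, what (C') delivers is $\hh^s\bigl(x,\widetilde\Phi_n(E)\bigr)\to\hh^s(x,E)$, and $\widetilde\Phi_n(E)$ is \emph{not} the set $E_n=\Phi_n(E)$: they agree only near $x$ and outside $B_1(x)$, and may differ in between. Since $\hh^s$ is nonlocal, this difference is not automatically negligible; one must also show $\hh^s(x,E_n)-\hh^s\bigl(x,\widetilde\Phi_n(E)\bigr)\to 0$. This does follow because $E_n\,\Delta\,\widetilde\Phi_n(E)$ lies in an annulus bounded away from $x$ (where the kernel $|x-y|^{-(d+s)}$ is bounded) and has measure $O(\|\Phi_n-\mathrm{Id}\|_{C^0})\to 0$, so the extra step is easy --- but it should be recorded, since without it the conclusion ``$\hh^s(x,E_n)\to\hh^s(x,E)$'' is not what (C') gives. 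The paper itself leaves (C) implicit in Proposition~\ref{properties}, so this fills a gap the paper glosses over; just make the localization argument complete.
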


%\begin{proposition}[Properties of $H_0$]
%The $0$-curvature $H_0$ defined in \eqref{zerocurv} satisfies the following properties:
%\begin{enumerate}
%\item if $E\subseteq F$ are compact sets, $x\in \partial E\cap \partial F$ then 
%$H_0(x,E)\geq H_0(x,F)$ and moreover $H_0$ is invariant by rotations and translations. 
%\item If $E\subseteq\R^d$ is compact, then $H_0(x,E)=-H_0(x, \R^d\setminus E)$.
%\item For $\lambda>0$ there holds \begin{equation}\label{resc} H_0(x, E)=H_0(\lambda x, \lambda E )+\omega_{d-1} \log \lambda.\end{equation}
%\item There exists a positive dimensional constant $c_d$ such that   
%\[H_0(x,B_1)=c_d>0.\] \eqref{resc} implies \begin{equation}\label{ball}H_0(x,B_r)=c_d-\omega_{d-1} \log r\end{equation}  and so that for  $r_0=\frac{c_d}{\omega_{d-1}}>1$ there holds
% $H_0(x, B_r)\leq 0$ for $r\geq r_0$. 
%\item If  $\partial E$ is a compact set  of class  $C^{1,1}$, then $|H_0(x, E)|<+\infty$.
% \item If  $\partial E_n, \partial E$ are compact $C^2$ hypersurfaces,  $\partial E_n\to \partial E$  in $C^2$  and $x_n\in \partial E_n\to x\in \partial E$, then $H_0(x_n, E_n)\to H_0(x, E)$. 
% \end{enumerate}\end{proposition}
% \begin{proof} All the properties can be checked easily by exploiting the definitions. 
% \end{proof} 

\subsection{Convergence of the $s$-fractional mean curvature flow as $s\to 0^+$}

%\begin{theorem}\label{convergence0}
%Let $E\subseteq\Rd$ be a set with compact boundary of class $C^{1,1}$. Then 
%\[\lim_{s\to 0^+} s\hh^s(x,E)=d\omega_d  \qquad \text{uniformly for $x\in \partial E $.}\]
%Moreover  
%\[\lim_{s\to 0^+} \frac{s\hh^s(x, E)-d\omega_d}{s} = \hh^0(x, E) \qquad \text{uniformly for $x\in\partial E$}\]
% where $\hh^0$ is defined in \eqref{zerocurv}. 
%\end{theorem}
\begin{theorem}\label{convergence0}
Let $\{s_n\}_{n\in\N}\subset(0,1)$ be such that $s_n\to 0$ as $n\to+\infty$. Let $\{E_n\}_{n\in\N}\subset\Reg$ be such that $E_n\to E$ in $\Reg$ for some $E\in\Reg$.
For every $x\in\partial E\cap \partial E_n$ it holds 
\begin{equation}\label{order1}
\lim_{n\to +\infty} \hh^{s_n}(x, E_n)-\frac{d\omega_d}{s_n} = \hh^0(x, E),
\end{equation}
 where $\hh^0$ is defined in \eqref{zerocurv}. 
In particular,
\begin{equation}\label{order0}
\lim_{n\to +\infty} s_n\hh^{s_n}(x,E_n)=d\omega_d.
\end{equation}
\end{theorem}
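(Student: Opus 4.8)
The plan is to compute the Taylor expansion in $s$ of the fractional curvature $\hh^{s}$ around $s=0$, identifying the leading singular term $d\omega_d/s$ and showing that the remainder converges to $\hh^0$. The natural starting point is to fix $R>\max\{\dia(E),\dia(E_n),1\}$ (possible uniformly in $n$ for $n$ large, since $E_n\to E$ in $\Reg$) and split the defining integral of $\hh^{s_n}(x,E_n)$ into the contribution inside $B_R(x)$ and the one outside. Outside $B_R(x)$, since $E_n$ is compact and contained in $B_R(x)$, the integrand is $+|x-y|^{-d-s_n}$, so that piece equals $\int_{\R^d\setminus B_R(x)}|x-y|^{-d-s_n}\ud y = d\omega_d\int_R^{\infty}r^{-1-s_n}\ud r = \frac{d\omega_d}{s_n}R^{-s_n}$. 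Expanding $R^{-s_n}=1-s_n\log R+o(s_n)$ gives precisely $\frac{d\omega_d}{s_n}-d\omega_d\log R+o(1)$, which produces both the divergent term to be subtracted and the $-d\omega_d\log R$ appearing in the definition \eqref{zerocurv} of $\hh^0$.

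Next I would treat the inside contribution $\int_{B_R(x)\setminus B_r(x)}\frac{\chi_{\R^d\setminus E_n}-\chi_{E_n}}{|x-y|^{d+s_n}}\ud y$ in the limit $r\to0^+$. Using the interior and exterior tangent balls of a common radius $\rr$ (available for $E_n$ uniformly in $n$, for $n$ large, since $E_n\to E$ in $\Reg$ and $E$ has $C^{1,1}$ boundary), Lemma \ref{inou2} lets me replace the principal-value integral over $B_\eta(x)$ by an honest integral over $B_\eta(x)\setminus(B^{\inn}_{\rr}\cup B^{\ou}_{\rr})$, and Lemma \ref{inou} controls this by $\omega^{\overline s}_{\rr}(\eta)$ uniformly for $s_n\in[0,\overline s]$. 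Thus the inner integral is, up to an error uniform in $n$ and vanishing with $\eta$, an integral over the fixed annular region $B_R(x)\setminus B_\eta(x)$ of $\frac{\chi_{\R^d\setminus E_n}-\chi_{E_n}}{|x-y|^{d+s_n}}$, where the kernel is bounded and $s_n\to0$ makes $|x-y|^{-d-s_n}\to|x-y|^{-d}$ uniformly on that compact annulus. Combining this with the $L^1$-type convergence $\chi_{E_n}\to\chi_E$ (which follows from $E_n=\Phi_n(E)$ with $\Phi_n\to\mathrm{Id}$ in $C^{\ell,\beta}$), the inner integral converges to $\int_{B_R(x)\setminus B_\eta(x)}\frac{\chi_{\R^d\setminus E}-\chi_E}{|x-y|^d}\ud y$, and then sending $\eta\to0$ recovers exactly $\hh^0(x,E)+d\omega_d\log R$ by \eqref{noRtoin}. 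Adding the two pieces, the $\pm d\omega_d\log R$ cancel and one obtains \eqref{order1}.

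Finally, \eqref{order0} is immediate: multiply \eqref{order1} by $s_n$ and note $s_n\big(\hh^{s_n}(x,E_n)-\frac{d\omega_d}{s_n}\big)=s_n\hh^{s_n}(x,E_n)-d\omega_d\to 0$ since the left-hand side tends to $0\cdot\hh^0(x,E)=0$, hence $s_n\hh^{s_n}(x,E_n)\to d\omega_d$.

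The main obstacle is obtaining the estimates near the singularity \emph{uniformly in $n$}: one must know that all $E_n$ enjoy interior and exterior ball conditions of a common radius and that $\eta_\ep$, $R$ can be chosen independently of $n$. This is exactly where the hypothesis $E_n\to E$ in $\Reg$ (convergence of the diffeomorphisms in $C^{\ell,\beta}$) enters, guaranteeing a uniform $C^{1,1}$ control and uniform diameters; Lemmas \ref{inou} and \ref{inou2}, whose bounds are uniform for $s\in[0,\overline s]$, then do the rest. A minor subtlety is keeping track of the two occurrences of $\log R$ (one from the outer integral's expansion, one from the normalization in \eqref{zerocurv}) and checking they cancel with the correct sign.
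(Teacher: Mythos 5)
Your proposal is correct and follows essentially the same route as the paper: split the integral at a large ball containing the sets, compute the exterior contribution exactly as $\tfrac{d\omega_d}{s_n}R^{-s_n}$ whose expansion yields the singular term plus $-d\omega_d\log R$, and handle the interior principal value uniformly in $n$ via the tangent-ball cancellation of Lemmas \ref{inou} and \ref{inou2} before identifying the limit through \eqref{noRtoin}. The only cosmetic differences are that the paper first reduces to compact $E_n$, $E$ via property (S) (which you implicitly assume) and proves \eqref{order0} directly rather than deducing it from \eqref{order1}.
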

%%%%%%%%%%%%%%%%%%%%%%%%%%%
%%%%%%%%%%%%%%%%%%%%%%%%%%%
%%%%%%%%%%%%%%%%%%%%%%%%%%%
\begin{proof} 
 We start by proving \eqref{order0} (that for $E_n\equiv E$ has  already been proved in \cite{v}). 

Let  $x\in \partial E_n\cap \partial E$ for all $n\in\N$.
First, notice that all the curvatures we are dealing with satisfy assumption (S) and are invariant by rotations (and translations). In particular,  we can assume  without loss of generality that  $E_n$ and $E$ are compact, that $x=0$ 
 and  $\nu_E(0)= \nu_{E_n} (0) =e_d$, where for all $F\in\Reg$ and $y\in\partial F$ we denote by $\nu_F(y)$  the outer  normal to $\partial F$ at $y$.

Setting $\eta:= 2 \dia ( E)$,
% and $\eta_n:=\dia (\partial E_n)$ . Then 
we have $E, \, E_n \subset B_\eta$ for $n$ large enough. 
We get 
\begin{multline}\label{fuori} 
\hh^{s_n}(0,E_n)= \lim_{r\to 0^+}  \int_{B_{\eta} \setminus B_r} \frac{\chi_{\R^d\setminus   E_n}(y)-\chi_{E_n}(y)}{| y|^{d+s_n}}\ud y+   \int_{\Rd\setminus B_{\eta}} \frac{1}{| y|^{d+s_n}}\ud y\\ =  \lim_{r\to 0^+}  \int_{B_{\eta} \setminus B_r} \frac{\chi_{\R^d\setminus E_n}(y)-\chi_{E_n}(y)}{| y|^{d+s_n}}\ud y+\frac{d\omega_d }{s_n}\eta^{-s_n}.\end{multline}  
%Note that if $y\in \Rd\setminus B_{\eta}$ and $ x\in  \Sigma$, then $\Rd\setminus B_{3\eta}(x)\subseteq  \Rd\setminus B_{2\eta}\subseteq \Rd\setminus B_\eta(x)$, and so
%\begin{multline}\label{fuori} d\omega_d (3\eta)^{-s}  = s\int_{\Rd\setminus B_{3\eta}(x)}\frac{1}{|x-y|^{d+s}}dy \leq 
% s \int_{\Rd\setminus B_{2\eta}} \frac{1}{|x-y|^{d+s}}dy\\ \leq s\int_{\Rd\setminus B_\eta( x)}\frac{1}{| x-y|^{d+s}}dy=d\omega_d\eta^{-s}. \end{multline} 
%Observe that   \begin{equation}\label{fuori} s  \eta\int_{\Rd\setminus B_{\eta}} \frac{1}{| y|^{d+s}}dy=d\omega_d (\eta)^{-s}.\end{equation} 
Since $ E_n \to E$ in $\Reg$,   we deduce that $E_n$ and $E$   satisfy a uniform  interior and exterior ball condition. More precisely,  there exists  $\delta>0$ such that $B_\delta(-\delta e_d )\subseteq E_n$ and $B_\delta( \delta e_d)\subseteq\Rd\setminus E_n$ and the same for $E$. 
%Due to this fact, and to the symmetry property of the kernel, we get 
% 
%\begin{equation}\label{uni}
%\lim_{r\to 0^+}  \int_{B_{\eta} \setminus B_r} \frac{\chi_{\R^d\setminus   E_n}(y)-\chi_{ E_n}(y)}{| y|^{d+s_n}}\ud y= \int_{B_{\eta} \setminus (B_\delta(-\delta e_d)\cup  B_\delta(\delta e_d)) } \frac{\chi_{\R^d\setminus E_n}(y)-\chi_{ E_n}(y)}{| y|^{d+s_n}}\ud y.
%\end{equation} 

In view of \eqref{fuori} and \eqref{uni}, we deduce that
\begin{equation}\label{fine0}
\begin{aligned}
&\lim_{n\to+\infty}s_n\hh^{s_n}(x,E_n)\\
=&\lim_{n\to+\infty}s_n\int_{B_{\eta} \setminus (B_\delta(-\delta e_d)\cup  B_\delta(\delta e_d)) } \frac{\chi_{\R^d\setminus E_n}(y)-\chi_{ E_n}(y)}{| y|^{d+s_n}}\ud y+ \lim_{n\to +\infty}{d\omega_d }\eta^{-s_n}\\
=&d\omega_d,
\end{aligned}
\end{equation}
where the last equality follows from \eqref{finally},
noticing that $s_n\in (0,\frac 1 2]$ for $n$ large enough.
% and that
%\begin{equation}\label{servedopo}
%\begin{aligned}
%&\int_{B_{\eta} \setminus (B_\delta(-\delta e_d)\cup  B_\delta(\delta e_d)) }  \left|\frac{\chi_{\R^d\setminus E_n}(y)-\chi_{ E_n}(y)}{| y|^{d+s_n}}\right|\ud y\le \int_{B_{\eta} \setminus (B_\delta(-\delta e_d)\cup  B_\delta(\delta e_d)) }\frac{2}{|y|^{d+s_n}}\ud y\\
%\le& 2\int_{B_{\eta} \setminus (B_\delta(-\delta e_d)\cup  B_\delta(\delta e_d)) }\max\left\{\frac{1}{|y|^{d+\frac 1 2}},1\right\}\ud y<+\infty,
%\end{aligned}
%\end{equation}
%where the last inequality is a consequence of Lemma \ref{inou}.  
%the dominate convergence Theorem, once noticing that the integrand is bounded by the $L^1$ function $\frac{1}{ \min \{ |y|^{d+ \frac 12}, 1\}}$ in the domain $B_{\eta} \setminus (B_\delta(-\delta e_d)\cup  B_\delta(\delta e_d))$.

Let us pass to the proof of \eqref{order1}.
By arguing as in \eqref{fuori}, we have 
\begin{eqnarray}\nonumber  
&&\lim_{n\to +\infty}\hh^{s_n}(0,E_n)- \frac{d\omega_d}{s_n}\\ \nonumber
&=& \lim_{n\to +\infty}  \lim_{r\to 0^+}  \int_{B_{\eta} \setminus B_r} \frac{\chi_{\R^d\setminus   E_n}(y)-\chi_{  E_n}(y)}{| y|^{d+s_n}}\ud y+d\omega_d \lim_{n\to +\infty} \frac{\eta^{-s_n}-1}{s_n}\\ &=& \lim_{n\to +\infty}  \lim_{r\to 0^+}  \int_{B_{\eta} \setminus B_r} \frac{\chi_{\R^d\setminus   E_n}(y)-\chi_{  E_n}(y)}{| y|^{d+s_n}}\ud y-d\omega_d\log\eta.\label{s}
\end{eqnarray}
In view of \eqref{uni} we have 
\begin{equation}\label{quasifine}
\begin{aligned}
&\lim_{n\to +\infty}\lim_{r\to 0^+} \int_{B_{\eta} \setminus B_r} \frac{\chi_{\R^d\setminus   E_n}(y)-\chi_{E_n}(y)}{| y|^{d+s_n}}\ud y\\
=&\lim_{n\to +\infty}\int_{B_{\eta} \setminus (B_\delta(-\delta e_d)\cup  B_\delta(\delta e_d)) } \frac{\chi_{\R^d\setminus   E_n}(y)-\chi_{  E_n}(y)}{| y|^{d+s_n}}\ud y\\
=&\int_{B_{\eta} \setminus (B_\delta(-\delta e_d)\cup  B_\delta(\delta e_d)) } \frac{\chi_{\R^d\setminus   E}(y)-\chi_{  E}(y)}{| y|^{d}}\ud y\,,
\end{aligned}
\end{equation}
where the last equality follows by Lemma \ref{inou} and by the Dominated Convergence Theorem.
 By \eqref{s}, \eqref{quasifine} and by \eqref{uni},
% (with $s_n$ and $E_n$ replaced by $0$ and $E$, respectively) 
 we get
 \begin{equation*}
 \begin{aligned}
 \lim_{n\to+\infty} \hh^{s_n}(0, E_n)-\frac{d\omega_d}{s_n}=&\int_{B_{\eta} \setminus (B_\delta(-\delta e_d)\cup  B_\delta(\delta e_d)) } \frac{\chi_{\R^d\setminus   E}(y)-\chi_{  E}(y)}{| y|^{d}}\ud y-d\omega_d\log\eta\\
=& \lim_{r\to 0^+}  \int_{B_{\eta} \setminus B_r} \frac{\chi_{\R^d\setminus   E}(y)-\chi_{ E}(y)}{| y|^{d}}\ud y-d\omega_d\log\eta\,,
\end{aligned}
\end{equation*}
 which, in view of \eqref{noRtoin}, implies \eqref{order1}.
\end{proof} 
%%%%%%%%%%%%%%%%%%%%%%%%%%%%%%
%%%%%%%%%%%%%%%%%%%%%%%%%%%%%%
%%%%%%%%%%%%%%%%%%%%%%%%%%%%%%
In the next two results we characterize the limit of $s$-fractional mean curvature flows as $s\to 0$.

\begin{theorem}\label{flowconvsto0ord0}
Let $\{s_n\}_{n\in\N}\subset (0,1)$ with $s_n\to 0$ as $n\to +\infty$. 
Let $u_0\in C(\R^d)$ be a uniformly continuous function, constant outside a compact set. 

For every $n\in\N$ let $u^n$ be the viscosity solution to \eqref{levelsetf} with $\H$ replaced by $\hh^{s_n}$, and set $v^n(x,t):= u^n(x,{s_n}t)$ for all $x\in\R^d,\, t\ge 0$. 
Then, $v^n\to v^\infty$ locally uniformly where $v^\infty:\R^d\times [0,+\infty)\to\R$ is the (unique) viscosity solution to \eqref{levelsetf} with $\H$ replaced by $d \omega_d$.
\end{theorem}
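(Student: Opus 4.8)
The plan is to apply the abstract convergence result, Theorem~\ref{genthm}, to a suitably chosen sequence of rescaled curvatures. For $s\in(0,1)$ set $\hh^{s}_{\mathrm{resc}}(x,E):= s\,\hh^{s}(x,E)$. First I would observe that if $u^n$ solves \eqref{levelsetf} with curvature $\hh^{s_n}$, then the time-rescaled function $v^n(x,t)=u^n(x,s_n t)$ solves \eqref{levelsetf} with curvature $\hh^{s_n}_{\mathrm{resc}}$; this is an immediate consequence of the chain rule applied to the level set equation, and it preserves the notion of viscosity solution because the admissible test functions are closed under the corresponding time reparametrization (composing $\ffi$ with $t\mapsto s_n t$ only rescales $\ffi_t$ and $\omega$, leaving the spatial structure, hence admissibility, untouched). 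So $v^n$ is the unique viscosity solution to \eqref{levelsetf} with $\H=\hh^{s_n}_{\mathrm{resc}}$.

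Next I would verify the hypotheses of Theorem~\ref{genthm} for the sequence $\{\hh^{s_n}_{\mathrm{resc}}\}_{n\in\N}$ with limit curvature $\H^\infty\equiv d\omega_d$ (the constant curvature, which trivially satisfies all the axioms (M), (T), (C'), (B), (S)). Each $\hh^{s_n}_{\mathrm{resc}}$ satisfies (M), (T), (S) since $\hh^{s_n}$ does by Proposition~\ref{properties} and multiplication by the positive constant $s_n$ preserves these; (C') likewise follows from (C') for $\hh^{s_n}$ up to the harmless factor $s_n\le 1$. The convergence $\hh^{s_n}_{\mathrm{resc}}\to d\omega_d$ in the sense of Definition~\ref{curvconver} is exactly the content of \eqref{order0} in Theorem~\ref{convergence0}: for $E_n\to E$ in $\Reg$ and $x\in\partial E\cap\partial E_n$, $s_n\hh^{s_n}(x,E_n)\to d\omega_d$. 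The remaining point is the uniform bound (UB): I need $\uc^{\inf}(\rho)\ge -K\rho$ for $\rho>1$ and $\oc^{\sup}(\rho)<+\infty$ for all $\rho>0$, where $\uc^n,\oc^n$ are computed for $\hh^{s_n}_{\mathrm{resc}}$. Since the $s$-fractional curvature of a ball is positive, $\uc^n\ge 0$, so the lower bound is immediate with $K=0$. For the upper bound one uses the explicit computation of the fractional curvature of the ball: $s\,\hh^{s}(x,\overline B_\rho)$ and $-s\,\hh^s(x,\R^d\setminus B_\rho)$ are, up to rotation, explicit functions of $s$ and $\rho$ that are bounded uniformly in $s\in(0,1)$ on each fixed $\rho>0$ — indeed one can estimate them using Lemma~\ref{inou} exactly as in the proof of Theorem~\ref{convergence0}, splitting the integral over a fixed annulus (controlled by $\omega^{1/2}_{\delta}$) and the exterior part (which gives $d\omega_d\rho^{-s}/1\le d\omega_d$ after multiplication by $s$ — wait, more carefully, the exterior contribution of $s\hh^s$ over $\R^d\setminus B_{c\rho}$ is $\le d\omega_d(c\rho)^{-s}\le d\omega_d\max(1,(c\rho)^{-1})$, bounded for fixed $\rho$). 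Hence $\oc^{\sup}(\rho)<+\infty$. This gives (UB).

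With all hypotheses verified, Theorem~\ref{genthm} yields that $v^n\to v^\infty$ locally uniformly, where $v^\infty$ is the unique viscosity solution to \eqref{levelsetf} with $\H=d\omega_d$, which is precisely the claim. I would structure the write-up as: (1) the rescaling lemma identifying $v^n$; (2) a short paragraph checking (M), (T), (C'), (S) for $\hh^{s_n}_{\mathrm{resc}}$; (3) the verification of (UB) via the uniform curvature bound on balls; (4) citing \eqref{order0} for Definition~\ref{curvconver}; (5) invoking Theorem~\ref{genthm}.

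The main obstacle I anticipate is the clean verification of (UB), specifically the uniform-in-$s$ upper bound on the rescaled curvature of balls $\oc^{\sup}(\rho)$ for all $\rho>0$ (not just $\rho>1$): the fractional curvature $\hh^s(x,\overline B_\rho)$ blows up like $1/s$ as $s\to 0$ and also has $\rho$-dependence that must be controlled near $\rho=0$, so one has to be a little careful extracting the factor $s$ and handling small radii — though the estimates of Lemma~\ref{inou}, uniform over $s\in[0,\overline s]$, are tailor-made for exactly this and make the argument routine once set up correctly. A secondary (purely bookkeeping) point is confirming that the time-rescaling genuinely maps admissible test functions to admissible test functions, which is straightforward since $f\in\F$ is unaffected and $\omega(s_n|t-\hat t|)$ is again a valid temporal modulus.
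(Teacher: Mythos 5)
Your proposal is correct and follows essentially the same route as the paper: rescale to $\H^n=s_n\hh^{s_n}$, check that $v^n$ solves the corresponding level set equation, verify (M), (T), (S), (C') via Proposition~\ref{properties}, obtain (UB) from positivity of the curvature of balls together with the scaling identity $\H^n(\rho x,\overline B_\rho)=\rho^{-s_n}\H^n(x,\overline B_1)$ and the uniform bound on the unit ball coming from Theorem~\ref{convergence0}, use \eqref{order0} for the convergence in the sense of Definition~\ref{curvconver}, and conclude by Theorem~\ref{genthm}. The only cosmetic difference is that the paper handles your anticipated obstacle (the upper bound in (UB) for small $\rho$) directly via $0\le\H^n(\rho x,\overline B_\rho)\le\max(1,\rho^{-1})\sup_n\H^n(x,\overline B_1)$ rather than re-running the Lemma~\ref{inou} estimates.
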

%%%%%%%%%%%%%%%%%%%%%%%%%%%%%%
%%%%%%%%%%%%%%%%%%%%%%%%%%%%%%
%%%%%%%%%%%%%%%%%%%%%%%%%%%%%%
\begin{proof}
For every $n\in\N$ we set $\H^n:=s_n\hh^{s_n}$ and $\H^\infty:=d\omega_d$.
By Proposition \ref{properties}, $\H^n$ are nonlocal curvatures in the sense of Subsection \ref{axioms} and satisfy (C') and (S). Trivially, also $\H^\infty$ is a nonlocal curvature satisfying (C') and (S).
Moreover, $\H^n$ are positive on every ball of radius $\rho>0$; furthermore, by the scaling property
\begin{equation}\label{sscaling}
\hh^{s}(\lambda x,\lambda E)=\lambda^{-s}\hh^{s}(x,E)\qquad\textrm{for all }\lambda>0,s\in(0,1),\,E\in\Reg,\, x\in\partial E,
\end{equation}
we deduce that 
$$
\H^n(\rho x,\overline B_\rho)=\rho^{-s_n} \H^n(x,\overline B_1)\qquad\textrm{for all }\rho>0, n\in\N,\, x\in\partial B_1\,.
$$
Therefore, $0\leq \H^n(\rho x,\overline{B}_\rho)\leq \max (1, \rho^{-1}) \sup_n\H^n(x,\overline{B}_1)$ 
and so in view of Theorem  \ref{convergence0}, the sequence $\{\H^n\}_{n\in\N}$ satisfies also property (UB).  Again by Theorem \ref{convergence0} (in particular, by \eqref{order0}), we get that $\H^n\to\H^\infty$ in the sense of Definition \ref{curvconver}. 

One can easily check that $v^n$ are viscosity solutions to \eqref{levelsetf} with $\H$ replaced by $\H^n$, so that,
by Theorem \ref{genthm} we can conclude that $v^n\to v^\infty$ locally uniformly, where $v^\infty$ is the viscosity solution to \eqref{levelsetf} with $\H$ replaced by $\H^\infty$.
\end{proof}
%%%%%%%%%%%%%%%%%%%%%%%%%%%%%%
%%%%%%%%%%%%%%%%%%%%%%%%%%%%%%
%%%%%%%%%%%%%%%%%%%%%%%%%%%%%%
\begin{theorem}\label{flowconvsto0ord1}
Let $\{s_n\}_{n\in\N}\subset (0,1)$ with $s_n\to 0$ as $n\to +\infty$. 
Let $u_0\in C(\R^d)$ be a uniformly continuous function, constant outside a compact set. 

For every $n\in\N$ let $u^n$ be the viscosity solution to \eqref{levelsetf} with $\H$ replaced by $\hh^{s_n}-\frac{d\omega_d}{s_n}$.
Then, $u^n\to u^\infty$ locally uniformly where $u^\infty:\R^d\times [0,+\infty)\to\R$ is the (unique) viscosity solution to \eqref{levelsetf} with $\H$ replaced by $\hh^0$.
\end{theorem}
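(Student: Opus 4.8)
The plan is to recognize that this theorem is an immediate application of the abstract convergence result, Theorem~\ref{genthm}, once we have identified the correct sequence of curvatures. Set $\H^n := \hh^{s_n} - \frac{d\omega_d}{s_n}$ and $\H^\infty := \hh^0$. The first observation is that subtracting a constant from a nonlocal curvature preserves all the structural axioms (M), (T), (C'), (S): monotonicity, translational invariance, uniform continuity, and symmetry are all unaffected by adding a spatially constant shift (for (S) one uses that the shift is the same constant and changes sign correctly, which works since $\frac{d\omega_d}{s_n}$ is a genuine constant). By Proposition~\ref{properties} the curvatures $\hh^{s_n}$ satisfy (M), (T), (C'), (B), (S), hence so do the $\H^n$; and $\hh^0 = \H^\infty$ likewise satisfies all of these by the same proposition.

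Next I would verify the uniform bound (UB) for the family $\{\H^n\}_{n\in\N}$. Using the scaling relation \eqref{sscaling} for $\hh^{s}$ on balls, one computes
$$
\H^n(\rho x, \overline B_\rho) = \rho^{-s_n}\,\hh^{s_n}(x,\overline B_1) - \frac{d\omega_d}{s_n} = \rho^{-s_n}\Big(\hh^{s_n}(x,\overline B_1) - \frac{d\omega_d}{s_n}\Big) + \frac{d\omega_d}{s_n}\big(\rho^{-s_n} - 1\big).
$$
By \eqref{order1} of Theorem~\ref{convergence0} (applied with $E_n\equiv \overline B_1$), the quantity $\hh^{s_n}(x,\overline B_1) - \frac{d\omega_d}{s_n}$ converges to $\hh^0(x,\overline B_1)$, hence is bounded uniformly in $n$; and $\frac{d\omega_d}{s_n}(\rho^{-s_n}-1) \to -d\omega_d\log\rho$ as $n\to\infty$, again bounded uniformly in $n$ for each fixed $\rho>0$. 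Combining with the same estimate for the complementary balls, one obtains that $\oc^{\sup}(\rho)<+\infty$ for all $\rho>0$ and that $\uc^{\inf}(\rho)\ge -K\rho$ for $\rho\ge 1$ with $K$ independent of $n$ (the $\rho$-linear growth coming precisely from the $-d\omega_d\log\rho$ term, which is dominated by $K\rho$ for large $\rho$). Thus (UB) holds. Then Theorem~\ref{convergence0}, precisely the limit \eqref{order1}, gives exactly that $\H^n\to\H^\infty$ in the sense of Definition~\ref{curvconver}: for any $E_n\to E$ in $\Reg$ and $x\in\partial E\cap\partial E_n$ we have $\hh^{s_n}(x,E_n)-\frac{d\omega_d}{s_n}\to\hh^0(x,E)$.

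Finally, one checks that $u^n$, the viscosity solution to \eqref{levelsetf} with $\H$ replaced by $\hh^{s_n}-\frac{d\omega_d}{s_n}$, is genuinely the object to which Theorem~\ref{genthm} applies, with the common initial datum $u_0$; this is immediate since no time rescaling is needed here (unlike in Theorem~\ref{flowconvsto0ord0}). Invoking Theorem~\ref{genthm} with the sequence $\{\H^n\}$ and limit $\H^\infty=\hh^0$ yields that $u^n\to u^\infty$ locally uniformly, where $u^\infty$ is the unique viscosity solution to \eqref{levelsetf} with $\H$ replaced by $\hh^0$; uniqueness is guaranteed since $\hh^0$ satisfies (C') by Proposition~\ref{properties}. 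I expect no serious obstacle: the only point requiring a little care is the verification of (UB), specifically checking that the logarithmic correction terms arising from the scaling of $\hh^0$ on balls are controlled linearly in $\rho$ and uniformly in $n$ — but this is routine given \eqref{order1} and \eqref{ball}, and everything else is a direct citation of the machinery already developed.
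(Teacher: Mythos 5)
Your proposal is correct and takes essentially the same route as the paper: identify $\H^n=\hh^{s_n}-\tfrac{d\omega_d}{s_n}$ and $\H^\infty=\hh^0$, observe that a constant shift preserves (M), (T), (C'), (S), use the scaling identity \eqref{sscaling} together with \eqref{order1} to establish (UB) and the convergence $\H^n\to\H^\infty$ in the sense of Definition~\ref{curvconver}, and conclude via Theorem~\ref{genthm}. The only place you gesture rather than compute is the uniformity in $\rho$ of the lower bound $\uc^{\inf}(\rho)\ge -K\rho$: the paper makes this precise with a Lagrange/mean-value estimate giving $\tfrac{\rho^{-s_n}-1}{s_n}\ge-\log\rho$ for all $n$ and $\rho>1$, which you correctly flag as the step requiring a little care.
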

%%%%%%%%%%%%%%%%%%%%%%%%%%%%%%
%%%%%%%%%%%%%%%%%%%%%%%%%%%%%%
%%%%%%%%%%%%%%%%%%%%%%%%%%%%%%
\begin{proof}
For every $n\in\N$ we set $\H^n:=\hh^{s_n}-\frac{d\omega_d}{s_n}$ and $\H^\infty:=\hh^0$.
By Proposition \ref{properties}, $\H^n$ and $\H^\infty$ are nonlocal curvatures in the sense of Subsection \ref{axioms} and satisfy (C') and (S). 
In view of \eqref{sscaling}, we have
\begin{equation}\label{lu1}
\H^n(\rho x,\overline B_\rho)=\rho^{-s_n}\H^n(x,\overline B_1)+d\omega_d\frac{\rho^{-s_n}-1}{s_n}\quad\textrm{for all }\rho>0,\,x\in\partial B_1.
\end{equation}
For $\rho>1$, by Lagrange Theorem, for every $n\in\N$ there exists $\xi_n\in(0,s_n)$ such that
\begin{equation}\label{lu2}
\frac{\rho^{-s_n}-1}{s_n}=-\rho^{-\xi_n}\log\rho\geq  -\log\rho
\end{equation} 
therefore, by \eqref{lu1}, \eqref{lu2},  and Theorem \ref{convergence0},  we have that there exists a constant $K\ge 0$ such that 
\begin{equation}\label{1ub}
\uc^{\inf}(\rho):=\inf_{n\in\N}\uc^n(\rho)\ge \inf_n \rho^{-s_n}\H^n(x,\overline{B}_1)-d\omega_d\log \rho\ge -K\rho\qquad\textrm{ for all }\rho>1\,.
\end{equation}
Moreover, again by \eqref{lu1}, \eqref{lu2}, and Theorem \ref{convergence0}, it is easy to see that there exist two constants $C_1,C_2>0$ such that
\begin{equation}\label{2ub}
\oc^{\sup}(\rho):=\sup_{n\in\N}\oc^n(\rho)
%\le \rho^{-s_n}\H^n(x,B_1)+d\omega_d\frac{\rho^{-s_n}-1}{s_n}
\le C_1\frac {1+|\log\rho| }\rho+C_2\quad\textrm{for all }\rho>0\,.
\end{equation}
Therefore, by \eqref{1ub} and \eqref{2ub} we deduce that $\{\H^n\}_{n\in\N}$ satisfies also property (UB), and again by Theorem \ref{convergence0} (in particular, by \eqref{order1}), we get that $\H^n\to\H^\infty$ in the sense of Definition \ref{curvconver}. 
One can thus apply Theorem \ref{genthm} in order to get the claim.
\end{proof}
%%%%%%%%%%%%%%%%%%%%%%%%%%%%%%
%%%%%%%%%%%%%%%%%%%%%%%%%%%%%%
%%%%%%%%%%%%%%%%%%%%%%%%%%%%%%
\subsection{Convergence of the $s$-fractional mean curvature flows as $s\to 1^-$}

For every $E\in\Reg$, and for every $x\in\partial E$, we denote by $\hh^1(x,E)$  the scalar mean curvature of the set $\partial E$ at $x$, i.e., the sum of the principal curvatures of $\partial E$ at $x$. Notice that, since $E\in\Reg$, near $x=(x',x_d)$ the boundary of $E$ can be described, in suitable coordinates,  as the graph of a function $f$ in $C^{\ell, \beta}$, with $f(x')=0, \, \D f(x')=0$; then, denoting by $A_x$ the $(d-1)\times(d-1)$ Hessian matrix of $f$ at a point $x=(x',f(x'))$, 
it is well known that
\begin{equation}\label{coord}
\hh^1(x,E)=-\frac{1}{d(d-1)\omega_{d}}\int_{\mathbb{S}^{d-2}}
 e^t A_x e \ud\mathcal{H}^{d-2}(e),
\end{equation}
where   $\mathbb{S}^{d-2}$ stands for the unit sphere in $\R^{d-1}$.

\begin{theorem}\label{convergence1}
Let $\{s_n\}_{n\in\N}\subset(0,1)$ be such that $s_n\to 1$ as $n\to+\infty$. Let $\{E_n\}_{n\in\N}\subset\Reg$ be such that $E_n\to E$ in $\Reg$ for some $E\in\Reg$.
For every $x\in\partial E\cap \partial E_n$
\begin{equation}\label{convuno}
\lim_{n\to+\infty} (1-s_n)\hh^{s_n}(x,E_n)=d(d-1)\omega_{d} \hh^1(x,  E).
\end{equation}

\end{theorem}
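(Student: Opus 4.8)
The plan is to reduce everything to a local computation in graph coordinates and to isolate, uniformly with respect to $n$, the term of order $\tfrac1{1-s}$ in the expansion of $\hh^s$ near $s=1$, the remainder being $o\!\big(\tfrac1{1-s}\big)$.

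\emph{Reduction.} By the translation and rotation invariance of $\hh^s$ (a consequence of (T) and of the isotropy of the kernel $|x-y|^{-(d+s)}$) together with property (S), one may assume $x=0$, $\nu_E(0)=\nu_{E_n}(0)=e_d$ and $E,E_n$ compact, exactly as in the proof of Theorem~\ref{convergence0} (for the second equality one rotates each $E_n$ by a rotation $\to\mathrm{Id}$, which leaves $\hh^{s_n}(0,E_n)$ unchanged and preserves convergence in $\Reg$). Since $E_n\to E$ in $\Reg$, there exist $\delta_0>0$ and functions $f,f_n\in C^{\ell,\beta}$ with $f(0)=f_n(0)=0$, $\D f(0)=\D f_n(0)=0$, such that $E$ (respectively $E_n$) coincides in $B_{\delta_0}$ with the subgraph $\{y_d<f(y')\}$ (respectively $\{y_d<f_n(y')\}$); moreover $f_n\to f$ in $C^{\ell,\beta}$, so the Hessians $A_n:=\D^2 f_n(0)$ converge to $A:=\D^2 f(0)=A_x$ and $\D^2 f_n\to\D^2 f$ uniformly near $0$, and $E,E_n$ satisfy a common interior and exterior ball condition of radius $\delta_0$ (so the principal values in \eqref{scurv} are well defined; cf.\ Lemma~\ref{inou2}).

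\emph{Local expansion.} Fix $\delta\in(0,\delta_0)$ and split the integral in \eqref{scurv} into the contribution of $B_\delta$ and of its complement. On $\R^d\setminus B_\delta$ one has $\big|\int_{\R^d\setminus B_\delta}\tfrac{\chi_{\R^d\setminus E_n}-\chi_{E_n}}{|y|^{d+s}}\ud y\big|\le\int_{\R^d\setminus B_\delta}|y|^{-(d+s)}\ud y=\tfrac{d\omega_d}{s}\delta^{-s}$, which stays bounded as $s\to1^-$, hence this part vanishes once multiplied by $(1-s)$. On $B_\delta$ we have $\chi_{\R^d\setminus E_n}(y)-\chi_{E_n}(y)=\mathrm{sgn}(y_d-f_n(y'))$; subtracting the odd function $\mathrm{sgn}(y_d)$ (whose principal value over $B_\delta$ is zero) leaves an absolutely integrable integrand supported on the thin slab $S_n:=\{y:\ y_d\text{ lies between }0\text{ and }f_n(y')\}$, where it equals $-2\,\mathrm{sgn}(f_n(y'))$. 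Using $|f_n(y')|\le C|y'|^2$ to carry out the $y_d$-integration, and then Taylor expanding $f_n(y')=\tfrac12(y')^tA_ny'+R_n(y')$ with $|R_n(y')|\le\tfrac12|y'|^2\,\omega_n(|y'|)$, $\omega_n(\rho):=\sup_{|z|\le\rho}|\D^2 f_n(z)-A_n|$, polar coordinates $y'=\rho\theta$ give
\[
\text{p.v.}\!\int_{B_\delta}\!\frac{\mathrm{sgn}(y_d-f_n(y'))}{|y|^{d+s}}\ud y=-\frac{\delta^{1-s}}{1-s}\int_{\mathbb{S}^{d-2}}\theta^tA_n\theta\ \ud\mathcal{H}^{d-2}(\theta)+\mathcal E_n(s,\delta),
\]
where $\mathcal E_n$ collects the error in replacing $\int_0^{|f_n(y')|}(|y'|^2+y_d^2)^{-(d+s)/2}\ud y_d$ by $|f_n(y')|\,|y'|^{-(d+s)}$, the mismatch between $B_\delta$ and $\{|y'|<\delta\}$ inside the slab, and the $R_n$-term; the first two satisfy $(1-s)|\mathcal E_n|\to0$ as $s\to1^-$ uniformly in $n$, while the $R_n$-contribution is bounded by $C\,\omega_n(\delta)\,\tfrac{\delta^{1-s}}{1-s}$.

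\emph{Conclusion.} Multiplying by $(1-s_n)$ and letting $n\to\infty$: the exterior part and the first two pieces of $\mathcal E_n$ vanish; since $\delta^{1-s_n}\to1$ and $A_n\to A$, the principal term tends to $-\int_{\mathbb{S}^{d-2}}\theta^tA\theta\,\ud\mathcal{H}^{d-2}(\theta)$; and the $R_n$-term is $\le C\,\omega_n(\delta)\,\delta^{1-s_n}$ with $\limsup_{n}\omega_n(\delta)\le\sup_{|z|\le\delta}|\D^2 f(z)-A|=:\omega_f(\delta)$ because $\D^2 f_n\to\D^2 f$ uniformly near $0$. Hence
\[
\Big|\lim_{n\to\infty}(1-s_n)\hh^{s_n}(0,E_n)+\int_{\mathbb{S}^{d-2}}\theta^tA\theta\,\ud\mathcal{H}^{d-2}(\theta)\Big|\le C\,\omega_f(\delta),
\]
and letting $\delta\to0$ yields $\lim_n(1-s_n)\hh^{s_n}(0,E_n)=-\int_{\mathbb{S}^{d-2}}\theta^tA\theta\,\ud\mathcal{H}^{d-2}(\theta)$, which by \eqref{coord} is precisely $d(d-1)\omega_d\,\hh^1(0,E)$. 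I expect the main obstacle to be the simultaneous uniform control of the remainders in $s$ and in $n$: one must check that every error is $o\!\big(\tfrac1{1-s}\big)$ with constants governed only by the $C^{\ell,\beta}$-bounds coming from $E_n\to E$, and take the limits in the right order ($n\to\infty$ for $\delta$ fixed, then $\delta\to0$) so as to absorb the modulus $\omega_f(\delta)$, which need not be uniform in $n$ when $\beta=0$.
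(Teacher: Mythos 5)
Your proof is correct and reaches \eqref{convuno} with the uniform-in-$n$ error control that the statement (as opposed to the pointwise limit for fixed $E$) actually requires, but the central computation is organized differently from the paper's. The paper first uses the identity $\operatorname{div}\bigl(y\,|y|^{-(d+s)}\bigr)=-s\,|y|^{-(d+s)}$ to rewrite $\hh^{s_n}(0,E_n)$ as the surface integral $\tfrac{2}{s_n}\int_{\partial E_n}\nu(y)\cdot y\,|y|^{-(d+s_n)}\ud\mathcal{H}^{d-1}(y)$ (formula \eqref{iniz}); the far part is then controlled by $\mathrm{Per}(E_n)$, and the near part becomes $\tfrac{2}{s_n}\int_{B'_\delta}\bigl(f_n(y')-\D f_n(y')\cdot y'\bigr)\bigl(f_n^2(y')+|y'|^2\bigr)^{-(d+s_n)/2}\ud y'$, whose Taylor expansion produces the same singular term $-\tfrac{\delta^{1-s_n}}{1-s_n}\int_{\mathbb{S}^{d-2}}e^t\D^2f_n(0)e\,\ud\mathcal{H}^{d-2}(e)$ (up to the harmless factor $1/s_n\to1$) that you isolate. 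You instead keep the volume integral of \eqref{scurv}, cancel against the tangent half-space by subtracting the odd function $\mathrm{sgn}(y_d)$, and integrate over the slab between the graph of $f_n$ and $\{y_d=0\}$; this avoids the integration by parts on the hypersurface at the cost of the (correctly estimated) $y_d$-integration and slab/ball mismatch errors, and is closer in spirit to the pointwise computations of \cite{av,cv}. Both routes rest on the same two facts --- the far field is $O(1)$ uniformly in $n$, and the only $\tfrac{1}{1-s}$ divergence comes from the quadratic part of $f_n$ --- and your treatment of the remainder $R_n$ via the modulus $\omega_n(\delta)$, with the limits taken in the order $n\to\infty$ first and then $\delta\to0$, is exactly the point that upgrades pointwise convergence to convergence along $E_n\to E$ in $\Reg$. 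Two cosmetic remarks: in your final display you should write $\limsup_n$ rather than $\lim_n$ until the $\delta\to0$ step shows the limit exists, and the identification of $-\int_{\mathbb{S}^{d-2}}\theta^tA\theta\,\ud\mathcal{H}^{d-2}(\theta)$ with $d(d-1)\omega_d\,\hh^1(0,E)$ should be cited as the paper's convention \eqref{coord}, which is what you do.
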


\begin{proof} The pointwise convergence (i.e., for $E_n\equiv E$)  has been proved in \cite{av, cv, i}. By a direct inspection of the proof it is easy to check that actually the convergence is uniform. 
For the sake of completeness, we briefly sketch  the computation. 
Arguing as in Theorem \ref{convergence0},   we may assume that $x=0$ and  $\nu_{E}(0)=\nu_{E_n}(0)=e_d$, where for all $F\in\Reg$ and $y\in\partial F$ the symbol $\nu_F(y)$ denotes the outer  normal to $\partial F$ at $y$. 
%Observe that 
Since 
\[
\text{div}\left(\frac{y}{|y|^{d+s}}\right)=-s\frac{1}{|y|^{d+s}},
\]
we may rewrite
\begin{equation}\label{iniz}
\hh^{s_n}(0,E_n)= \frac{ 2}{s_n}  \int_{ \partial E_n } \frac{\nu(y)\cdot y }{|y|^{d+s}}\ud\mathcal{H}^{d-1}(y)\,.
\end{equation}

For every $\delta>0$ the symbol $B'_\delta$ denotes the ball in $\R^{d-1}$ of center $0$ and radius $\delta$.

Since $E_n\to E$  in $\Reg$ there exist  $\delta>0$ and functions $f_n,f\in C^{\ell,\beta}(B'_\delta;[0,\delta))$ such that $f_n\to f$ in $C^{\ell,\beta}(B'_\delta;\R)$ such that $f_n(0)=f(0)=0$,  $\D f_n(0)=\D f(0)=0$ and  
\begin{eqnarray*}
&\partial E_n \cap B_\delta=\{(y',f_n(y'))\,:\, y'\in B'_\delta\},\quad &\partial E \cap B_\delta=\{(y',f(y'))\,:\, y'\in B'_\delta\}\\
&E_n \cap B_\delta=\{(y',y_d)\,:\, y'\in B'_\delta,\, y_d\leq f_n(y')\},\quad &E \cap B_\delta=\{(y',y_d)\,:\, y'\in B'_\delta,\, y_d\leq f(y')\}.
\end{eqnarray*}  
Let  $\eta>\dia (\partial E)$, so that for $n$ large enough $\partial E_n,\partial E\subset B_\eta$. 

On the one hand, we notice that
\begin{equation}\label{fuori1s1} 
\left|\frac{ 2}{s_n}  \int_{ \partial E_n \setminus B_\delta } \frac{\nu(y)\cdot y }{|y|^{d+s_n}}\ud\mathcal{H}^{d-1}(y)\right|  \leq  \frac{2}{s\delta^{d+s_n-1}}\text{Per}(E_n)\le \frac{C}{s\delta^{d+s_n-1}}\,,  
\end{equation}
for some constant $C>0$ independent of $n$.
On the other hand we have 
\begin{equation}\label{contobordo} 
\frac{2}{s_n} \int_{\partial E_n \cap B_\delta } \frac{\nu_{E_n}(y)\cdot y}{|y|^{d+s_n}}\ud\mathcal{H}^{d-1}(y) =\frac{2}{s_n} \int_{B'_\delta} \frac{f_n(y')- \D f_n(y')\cdot y'}{(f_n^2(y')+|y'|^2)^{\frac{d+s_n}{2}}}\ud y'.
\end{equation}

Since $f_n(y')=\frac{1}{2} (y')^t\D^2 f_n(0) y' +o(|y'|^2)$ and $\D f_n(y')= \D^2 f_n(0)y' +\mathrm{o}(|y'|)$, letting $e=\frac{y'}{|y'|}$, we have 
that \begin{eqnarray}\label{contof}
f_n(y')-\D f_n(y')\cdot y'&=&-\frac{1}{2}|y'|^2 (e^t \D^2 f_n(0)e) +\mathrm{o}(|y'|^2) 
\\ \label{contotay}
 \frac{1}{(f_n^2(y')+|y'|^2)^{\frac{d+s_n}{2}}} &=& \frac{1}{|y'|^{d+s_n}\left[1+\frac{1}{4}|y'|^2 ( e^t\D^2 f_n(0) e)^2+\mathrm{o}(|y'|^2)\right]^{\frac{d+s_n}{2}}}
  \\
\nonumber &=& \frac{1}{|y'|^{d+s_n}} \left[1-\frac{d+s_n}{8}|y'|^2 (e^t\D^2 f_n(0) e)^2+\mathrm{o}(|y'|^2)\right].
\end{eqnarray}
 %\red{Noting that $e^t(\D^2 f_n(0))^2 e=(e^t\D^2 f_n(0) e)^2$ (si usa?)} and 
 Replacing \eqref{contof} and \eqref{contotay} in \eqref{contobordo} we get 
 \begin{eqnarray}\label{contobordonuovo}
 && \frac{2}{s_n} \int_{\partial E_n \cap B_\delta } \frac{\nu_{E_n}(y)\cdot y}{|y|^{d+s_n}}d\mathcal{H}^{d-1}(y) \\
 \nonumber &=& -\frac{1}{s_n} \int_{B'_\delta}
 \frac{e^t\D^2 f_n(0) e}{|y'|^{d+s_n-2}} \left[1-\frac{d+s_n}{8}|y'|^2 (e^t\D^2 f_n(0) e)^2+\mathrm{o}(|y'|^2)\right]\ud y'\\\nonumber 
 &=& 
-\frac{1}{s_n(1-s_n)}\delta^{1-s_n}\int_{\mathbb{S}^{d-2}} 
 e^t\D^2 f_n(0) e \ud\mathcal{H}^{d-2}(e)
%\\ \nonumber &+& 
% \frac{d+s}{8s(3-s)}\delta^{3-s}\int_{\mathbb{S}_{d-2}} 
% (e^t\D^2 f(0) e) (e^t\D^2 f(0) e)^2  \mathcal{H}^{d-2}(e)+o(1)
% \\\nonumber &=& 
%\frac{1}{s(1-s)}\delta^{1-s}\omega_{d-2} H(E,0)
\\ \nonumber &+&
 \frac{d+s_n}{8s_n(3-s_n)}\delta^{3-s_n}\int_{\mathbb{S}^{d-2}} 
( e^t \D^2 f_n(0) e)^3 \ud \mathcal{H}^{d-2}(e)+\mathrm{o}(1).
\end{eqnarray}
Therefore, \eqref{iniz}, \eqref{fuori1s1} and  \eqref{contobordonuovo}, together with the $C^{\regu}$-convergence of $f_n$ to $f$ and \eqref{coord}, imply \eqref{convuno}.

 %imply immediately that $(1-s)\hh^s(E,x)= \frac{1}{s}\delta^{1-s}(d-1)\omega_{d-1} H(E, x)+o(1)$,  where $o(1)$ is uniform with respect to $x$. 
%
%Moreover 
% \begin{eqnarray}\label{contobordonuovo1}
%&& H_s(E,0)-\frac{1}{(1-s)} \omega_{d-2} H(E,0)=\left[ \frac{\delta^{1-s}-s}{s(1-s)}\right]\omega_{d-2} H(E,0)+\frac{ 2}{s}\int_{\Sigma\setminus B_{\delta}} \frac{\nu(y)\cdot y}{|y|^{d+s}}d\mathcal{H}^{d-1}(y) \\\nonumber
%&+& \frac{d+s}{8s(3-s)}\delta^{3-s}\int_{\mathbb{S}_{d-2}} 
% (e^t \D^2 f(0) e)^3  \mathcal{H}^{d-2}(e)+o(1)
%  .\end{eqnarray} 
% 
%We should compute 
%\[ \int_{\mathbb{S}_{d-2}} 
% (e^t \D^2 f(0) e)^3  \mathcal{H}^{d-2}(e). \]
%{\bf We reduce to $d=3$.  } 
%Let $\lambda_1, \lambda_2$ the $2$ eigenvalues of the matrix $\D^2 f(0)$ and we assume that it is diagonal (up to a suitable rotation of oordinates). 
%Then
% \begin{eqnarray}\label{curva}
% \int_{\mathbb{S}_{1}} 
% (e^t \D^2 f(0) e)^3  \mathcal{H}^{1}(e)&=&  (\lambda_1^3+\lambda_2^3) \int_{\mathbb{S}_{1}} e_1^6  \mathcal{H}^{1}(e)+3 \lambda_1\lambda_2(\lambda_1+\lambda_2) \int_{\mathbb{S}_{1}} e_1^4e_2^2  \mathcal{H}^{1}(e)\\ &=&   (\lambda_1^3+\lambda_2^3) \frac{5}{8}\pi+ 3 \lambda_1\lambda_2(\lambda_1+\lambda_2) \frac{1}{8}\pi\\ 
% &=& \frac{5}{8}\pi (\lambda_1+\lambda_2)^3-\frac{3}{2}\pi \lambda_1\lambda_2 (\lambda_1+\lambda_2)\\
% &=& 5 \pi H(E,0)^3 -3\pi H(E,0)G(E,0) \end{eqnarray} 
%  where $H(E,0)=\frac{\lambda_1+\lambda_2}{2}$ is the mean curvature, and $G(E,0)=\lambda_1\lambda_2$ is the Gaussian curvature. 
%
% 
\end{proof} 

%%%%%%%%%%%%%%%%%%%%%%%%%%%%%%
%%%%%%%%%%%%%%%%%%%%%%%%%%%%%%
%%%%%%%%%%%%%%%%%%%%%%%%%%%%%%
\begin{theorem}\label{flowconvsto1}
Let $\{s_n\}_{n\in\N}\subset (0,1)$ with $s_n\to 1$ as $n\to +\infty$. 
Let $u_0\in C(\R^d)$ be a uniformly continuous function, constant outside a compact set. 

For every $n\in\N$ let $u^n$ be the viscosity solution to \eqref{levelsetf} with $\H$ replaced by $\hh^{s_n}$, and set $v^n(x,t):= u^n(x,(1-s_n)t)$ for all $x\in\R^d,\, t\ge 0$. 
Then, $v^n\to v^\infty$ locally uniformly where $v^\infty:\R^d\times [0,+\infty)\to\R$ is the (unique) viscosity solution to \eqref{levelsetf} with $\H$ replaced by $d(d-1)\omega_d\hh^1$.
\end{theorem}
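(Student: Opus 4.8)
The plan is to apply the abstract convergence result, Theorem \ref{genthm}, to the sequence of rescaled curvatures. First I would set $\H^n := (1-s_n)\hh^{s_n}$ and $\H^\infty := d(d-1)\omega_d \hh^1$, and observe that $v^n(x,t) = u^n(x,(1-s_n)t)$ is, by a routine time-rescaling check, the viscosity solution to \eqref{levelsetf} with $\H$ replaced by $\H^n$ (the factor $1-s_n$ passes from the time derivative onto the curvature term). The curvatures $\hh^{s_n}$ are generalized curvatures satisfying (C'), (S), (B) by Proposition \ref{properties}, and multiplying by the positive constant $1-s_n$ preserves all of these; the scalar mean curvature $\hh^1$ is the prototypical classical generalized curvature and also satisfies (C') and (S), so the structural hypotheses of Theorem \ref{genthm} on $\H^\infty$ hold.

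The next step is to verify the uniform bound (UB) for $\{\H^n\}_{n\in\N}$. Since $\hh^{s_n}$ is positive on balls, $\uc^n(\rho) \ge 0$ for all $\rho$, giving the lower bound trivially with $K=0$. For the upper bound, I would use the scaling identity \eqref{sscaling}, which gives $\H^n(\rho x, \overline B_\rho) = \rho^{-s_n}(1-s_n)\hh^{s_n}(x,\overline B_1)$; combined with the convergence $(1-s_n)\hh^{s_n}(x,\overline B_1) \to d(d-1)\omega_d \hh^1(x,\overline B_1)$ from Theorem \ref{convergence1}, the quantities $(1-s_n)\hh^{s_n}(x,\overline B_1)$ are bounded uniformly in $n$, and hence $\oc^{\sup}(\rho) \le C\max(1,\rho^{-1})$ is finite for every $\rho > 0$. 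Thus (UB) holds. Finally, the curvature convergence $\H^n \to \H^\infty$ in the sense of Definition \ref{curvconver} is precisely the content of Theorem \ref{convergence1}: for $E_n \to E$ in $\Reg$ and $x \in \partial E \cap \partial E_n$, one has $(1-s_n)\hh^{s_n}(x,E_n) \to d(d-1)\omega_d \hh^1(x,E)$.

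With all hypotheses of Theorem \ref{genthm} in place, that theorem yields $v^n \to v^\infty$ locally uniformly, where $v^\infty$ is the unique viscosity solution to \eqref{levelsetf} with $\H$ replaced by $d(d-1)\omega_d \hh^1$, which is the assertion. I expect the proof to be essentially a bookkeeping exercise once Theorem \ref{convergence1} is granted; the only mildly delicate point is the verification that $v^n$ solves the rescaled level set equation and that $\hh^1$ indeed falls within the axiomatic framework (in particular that (C') holds for the classical mean curvature on sets in $\Reg$, which requires $\ell \ge 2$), but both are standard and already implicit in the setup of \cite{cmp}. The genuinely substantive input — the uniform convergence of the rescaled fractional curvatures to the classical mean curvature — has already been isolated in Theorem \ref{convergence1}, so no new hard estimate is needed here.
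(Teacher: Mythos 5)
Your proposal is correct and follows essentially the same route as the paper's own proof: rescale the curvatures to $\H^n=(1-s_n)\hh^{s_n}$, check the axioms and (UB) via the scaling identity \eqref{sscaling} together with positivity on balls, invoke Theorem~\ref{convergence1} for the curvature convergence, and close with Theorem~\ref{genthm}. The bookkeeping details you flag (that $v^n$ solves the rescaled level set equation and that $\hh^1$ fits the axiomatic framework) are indeed treated as routine in the paper as well.
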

%%%%%%%%%%%%%%%%%%%%%%%%%%%%%%
%%%%%%%%%%%%%%%%%%%%%%%%%%%%%%
%%%%%%%%%%%%%%%%%%%%%%%%%%%%%%
\begin{proof}
For every $n\in\N$ we set $\H^n:=(1-s_n)\hh^{s_n}$ and $\H^\infty:=d(d-1)\omega_d\hh^1$.
By Proposition \ref{properties}, $\H^n$ are nonlocal curvatures in the sense of Subsection \ref{axioms} and satisfy (C') and (S). Trivially, also $\H^\infty$ is a nonlocal curvature satisfying (C') and (S).
Moreover, $\H^n$ are positive on all the balls of radius $\rho>0$; furthermore, by the scaling property \eqref{sscaling},
we deduce that 
$$
\H^n(\rho x,\overline B_\rho)=\rho^{-s_n} \H^n(x,\overline B_1)\qquad\textrm{for all }\rho>0, n\in\N,\, x\in\partial B_1\,.
$$
Therefore,  in view of Theorem  \ref{convergence1}, the sequence $\{\H^n\}_{n\in\N}$ satisfies also property (UB) and again by Theorem \ref{convergence1}, we get that $\H^n\to\H^\infty$ in the sense of Definition \ref{curvconver}. 

One can easily check that $v^n$ are viscosity solutions to \eqref{levelsetf} with $\H$ replaced by $\H^n$, so that,
by Theorem \ref{genthm}, we can conclude that $v^n\to v^\infty$ locally uniformly, where $v^\infty$ is the viscosity solution to \eqref{levelsetf} with $\H$ replaced by $\H^\infty$.

\end{proof}

\begin{remark}\upshape\label{remani}  
There exist also  anisotropic versions of the $s$-fractional curvature. Let $K\subseteq \R^d $ be a 
 compact convex set with non-empty interior and symmetric with respect to the origin,  and  let $|x|_K:=\inf\{\lambda:  x\in \lambda K\}$, i.e.,  the norm in $\R^d$ having the set $K$ as unitary ball.  
 
For every $s\in (0,1)$, the anisotropic $s$-fractional perimeter of a measurable set $E\subset\R^d$ is defined by
\begin{equation}\label{anis}
 \P^s_{K}(E):=\int_{E}\int_{\R^d\setminus E} \frac{1}{|x-y|^{d+s}_{K}} \ud y\ud x
 \end{equation}
 and its first variation, i.e., the anisotropic $s$-fractional curvature \cite{cnr}
 %with respect to $K$ is defined as follows (see \cite{cnr}):  for  $s\in (0, 1)$,  $E\subset \R^d$ measurable set  and
is formally given by 
\begin{equation}\label{an}
\hh^s_K(x, E):=   \lim_{r\to 0^+}  \int_{\Rd\setminus B_r(x)} \frac{\chi_{\R^d\setminus E}(y)-\chi_{E}(y)}{|x-y|_K^{d+s}}\ud y, \qquad x\in \partial E.
\end{equation}
It is easy to check that Proposition \ref{properties}  applies  also to these curvatures (see \cite{cnr}), hence they are nonlocal curvatures in the sense of Subsection \ref{axioms}.  

In \cite{l}, using suitable integration formulas,  the convergence (pointwise and in the sense of $\Gamma$-convergence) of $\P^s_K$ to $\P_{ZK}$
 as $s\to 1$ is proved.  The limit $\P_{ZK}$ is given by
\begin{equation} \label{peran} 
\P_{ZK}(E)=\int_{\partial^* E} |\nu_E(x)|_{Z^*K} \ud\mathcal{H}^{d-1}(x)
\end{equation}
where $|y|_{Z^*K}:=\frac{d+1}{2} \int_K |y\cdot z|\ud z.$  It is easy to check that  $Z^\star K=\{ x: |x|_{Z^*K}\leq 1\}$   is strictly convex. 

We expect that by similar methods it is possible to  show that $(1-s)\hh^{s}_K$ converge in the sense of Definition \ref{curvconver}  as  $s\to 1^-$  to a  local anisotropic curvature $\hh_{ZK}^1$,  which is given  by the first variation of the anisotropic perimeter  \eqref{peran}.  Since  $Z^\star K$ is strictly convex, then $\hh_{ZK}^1$ is a  curvature in the sense of  Subsection \ref{axioms}, and  (UB) is trivially satisfied, so Theorem \ref{genthm} on the convergence  of the corresponding geometric flows should also apply. 
 \end{remark} 
%%%%%%%%%%%%%%%%%%%%%%%%%%%%%%
%%%%%%%%%%%%%%%%%%%%%%%%%%%%%%
%%%%%%%%%%%%%%%%%%%%%%%%%%%%%%
%%%%%%%%%%%%%%%%%%%%%%%%%%%%%%
%%%%%%%%%%%%%%%%%%%%%%%%%%%%%%
%%%%%%%%%%%%%%%%%%%%%%%%%%%%%%

%\subsection{Convergence of the mean curvature flow of Riesz potentials as $s\to 0^{-}$}
%We observe that the same asymptotics of $\hh^s$ as $s\to 0$ hold for the  Riesz potentials, which are defined for $s\in (0,d)$,  compact sets $E$ and  $x\in \partial E$  as follows 

\section{The Riesz curvature flow}\label{secriesz}
In this section we introduce and analyze a new nonlocal geometric flow, where the curvature  is the first variation of a Riesz interaction energy. 

Let $s\in (-d,0)$\,.
For every $E\in\Reg$ and for every $x\in\partial E$, we set
\begin{equation}\label{rieszvar} 
\kk^s(x,E):= \begin{cases} \displaystyle - 2\int_E  \frac{1}{|x-y|^{d+s}}\ud y\,,& E\subset\subset\R^d\,,\\ 
\quad&\\
\displaystyle 2\int_{\R^d\setminus E}  \frac{1}{|x-y|^{d+s}}\ud y\,,& \R^d\setminus E\subset\subset\R^d\,.
\end{cases}
\end{equation}  
Note that, for $E$ compact, $\kk^s$ is the first variation of the perimeter-like Riesz interaction functional 
\begin{equation}\label{riesz} 
\J^s(E):= - \int_E\int_E \frac{1}{|x-y|^{d+s}}\ud y\ud x\,. 
\end{equation} 
 %%%%%%%%%%%%%%%%%%%%%%%%%%%%%%
%%%%%%%%%%%%%%%%%%%%%%%%%%%%%%
%%%%%%%%%%%%%%%%%%%%%%%%%%%%%%
First of all we observe the following. 
 %%%%%%%%%%%%%%%%%%%%%%%%%%%%%%
%%%%%%%%%%%%%%%%%%%%%%%%%%%%%%
%%%%%%%%%%%%%%%%%%%%%%%%%%%%%%
\begin{proposition}\label{propertiesriesz}
For every $s\in (-d,0)$ the functionals $\kk^s$ satisfy the properties (M), (T), (C'), and (S) in Subsection \ref{axioms}. Moreover, if $s\in  [-1,0)$, then $\kk^s$ satisfies also property (B). 
\end{proposition}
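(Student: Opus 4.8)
The plan is to verify the axioms (M), (T), (C'), (S) and — under the restriction $s\in[-1,0)$ — (B), by direct inspection of the definition \eqref{rieszvar}, exploiting that the Riesz kernel $|x-y|^{-(d+s)}$ is integrable near the diagonal precisely because $d+s<d$. The two properties (T) and (M) are immediate: translational invariance is clear since the kernel depends only on $x-y$; monotonicity follows because, for $E\subseteq F$ compact with $x\in\partial E\cap\partial F$, one has $\kk^s(x,F)-\kk^s(x,E)=-2\int_{F\setminus E}|x-y|^{-(d+s)}\,\mathrm{d}y\le 0$ (and analogously, with the complementary formula, when $\R^d\setminus F$ is compact). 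Property (S) is built into the definition: for $E$ compact, $\R^d\setminus\overset{\circ}{E}$ has compact complement and $\kk^s(x,\R^d\setminus\overset{\circ}{E})=2\int_{\R^d\setminus(\R^d\setminus\overset{\circ}{E})}|x-y|^{-(d+s)}\,\mathrm{d}y=2\int_{E}|x-y|^{-(d+s)}\,\mathrm{d}y=-\kk^s(x,E)$.

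For the continuity axiom (C') the key point is that, unlike the fractional mean curvature case, \emph{there is no principal-value cancellation needed}: since $d+s<d$, the kernel is locally integrable, so $\kk^s(x,E)$ is simply an absolutely convergent integral of the kernel against $\chi_E$ (or $\chi_{\R^d\setminus E}$), with no interior/exterior ball condition required. Thus I would argue as follows. Given $\rr>0$, $E\in\Reg$, $x\in\partial E$ with interior and exterior balls of radius $\rr$ at $x$, and a diffeomorphism $\Phi$ of class $C^{\ell,\beta}$ equal to the identity outside $B_1(x)$ with $\|\Phi-\mathrm{Id}\|_{C^{\ell,\beta}}$ small, set $\widetilde E:=\Phi(E)$, $\widetilde x:=\Phi(x)$. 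Write $\kk^s(x,E)-\kk^s(\widetilde x,\widetilde E)$ and estimate it by: (i) a change of variables $y\mapsto\Phi(y)$ in the integral over $E$, whose Jacobian is $1+O(\|\Phi-\mathrm{Id}\|_{C^1})$ and which maps $E$ to $\widetilde E$; (ii) controlling the difference of the two kernels $|x-y|^{-(d+s)}$ and $|\widetilde x-\Phi(y)|^{-(d+s)}$, which is uniformly small away from $y=x$ and, near $y=x$, is handled by the bound $\int_{B_\eta(x)}|x-y|^{-(d+s)}\,\mathrm{d}y = C_{d,s}\,\eta^{-s}\to 0$ as $\eta\to0$, uniformly in $s$ ranging in a compact subinterval if desired; (iii) the fact that the region $B_1(x)$ where $\Phi\ne\mathrm{Id}$ is bounded. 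Choosing first a small radius $\eta_\e$ to absorb the near-diagonal contribution into $\e/2$ and then $\|\Phi-\mathrm{Id}\|_{C^{\ell,\beta}}$ small enough for the rest yields the modulus $\omega_\rr$. (In fact, since the kernel is integrable, one does not even use the ball conditions here; but stating it in the form of (C') is harmless.)

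It remains to discuss (B), which is where the restriction $s\in[-1,0)$ enters and which I expect to be the only delicate point. By the scaling of the Riesz kernel one gets $\kk^s(\rho x,\overline B_\rho)=-2\int_{B_\rho}|\rho x-y|^{-(d+s)}\,\mathrm{d}y=\rho^{-s}\,\kk^s(x,\overline B_1)$ for $x\in\partial B_1$, and similarly by (S), $-\kk^s(\rho x,\R^d\setminus B_\rho)=\kk^s(\rho x,\overline B_\rho)=\rho^{-s}\kk^s(x,\overline B_1)$ for $x\in\partial B_1$ (the value being independent of $x$ by rotational invariance). Hence $\uc(\rho)=\rho^{-s}\uc(1)=-2\rho^{-s}\int_{B_1}|e_d-y|^{-(d+s)}\,\mathrm{d}y=:-c_{d,s}\,\rho^{-s}$ with $c_{d,s}>0$. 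Since $s\in[-1,0)$ means $-s\in(0,1]$, we have $\rho^{-s}\le \rho$ for all $\rho\ge 1$, so $\uc(\rho)\ge -c_{d,s}\,\rho$, which is exactly \eqref{lwrbdkappa} with $K=c_{d,s}$. If instead $s<-1$ then $\rho^{-s}$ grows faster than $\rho$ and (B) fails in the stated form, explaining the restriction. This is the whole content of the proof; the main obstacle is simply organizing the uniform near-diagonal estimate in the verification of (C'), for which Lemma \ref{inou} (or a direct computation of $\int_{B_\eta}|y|^{-(d+s)}\,\mathrm{d}y$) is the relevant tool.
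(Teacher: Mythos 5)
Your proposal is correct in substance and, for (M), (T), (S) and (C'), follows the same route as the paper, which simply verifies these axioms ``as in Proposition \ref{properties}''; your explicit one-line computations for (M) and (S) and your observation that no principal value is needed (the kernel being locally integrable for $s<0$) are accurate. The genuine difference is in (B). The paper proves it by a direct slicing estimate of $\kk^s(0,B_\rho(\rho e_d))=-2\int_{B_\rho(\rho e_d)}|y|^{-(d+s)}\,\mathrm{d}y$, splitting the ball along $\{y_d=\rho\}$ and bounding each piece. You instead use the exact scaling identity $\kk^s(\rho x,\overline B_\rho)=\rho^{-s}\kk^s(x,\overline B_1)$ together with (S) to get $\uc(\rho)=-c_{d,s}\,\rho^{-s}$, and then $\rho^{-s}\le\rho$ for $\rho\ge1$ precisely when $-s\le 1$. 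This is cleaner, gives the sharp form of $\uc$, and makes transparent both why the threshold is $s=-1$ and why balls blow up in finite time for $s<-1$ (the paper's subsequent remark); it is the same device the paper itself uses for the scaling identities \eqref{snegscaling} in the convergence proofs, so nothing is lost.

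One caveat on your sketch of (C'), which also applies to the paper's terse appeal to Proposition \ref{properties}: after the change of variables, the portion of $E$ outside $B_1(x)$ is unchanged, but the kernel's base point moves from $x$ to $\widetilde x=\Phi(x)$, so the far-field contribution is $-2\int_{E\setminus B_1(x)}\bigl(|x-y|^{-(d+s)}-|\widetilde x-y|^{-(d+s)}\bigr)\,\mathrm{d}y$, which is controlled by $\|\Phi-\mathrm{Id}\|_{C^0}\int_{E\setminus B_1(x)}|x-y|^{-(d+s)-1}\,\mathrm{d}y$. For the modulus $\omega_\rr$ to be uniform over all admissible $E$, as (C') requires, this last integral must be bounded independently of $E$, which holds only when $d+s+1>d$, i.e.\ $s>-1$. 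Your phrase that the kernel difference ``is uniformly small away from $y=x$'' and that it suffices that $\Phi\ne\mathrm{Id}$ only on a bounded set glosses over this; it is harmless in the range $s\in(-1,0)$ relevant to the later theorems, but it is the one place where the argument is not a routine transcription of the $s\ge0$ case.
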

\begin{proof}The validity of properties  (M), (T), (C'), and (S)  can be proven by arguing exactly as in Proposition \ref{properties}. As for (B), we observe, denoting with $B'_\rho$ the ball in $\R^{d-1}$ with radius $\rho$, that 
\begin{align*} \kk^s(0,B_\rho(\rho e_d))= & - 2\int_{B_\rho(\rho e_d)}  \frac{1}{|y|^{d+s}}\ud y\\ =& -2 \int_{B'_\rho}  \int_{\rho-\sqrt{\rho^2-|y'|^2}}^\rho\frac{1}{(|y'|^2+y_d^2)^{\frac{d+s}{2}}}dy_d\ud y' - 2\int_{B_\rho(\rho e_d)\cap \{y_d\geq \rho\}}  \frac{1}{|y|^{d+s}}\ud y\\
\geq & -2 \int_{B'_\rho} \frac{\sqrt{\rho^2-|y'|^2}}{|y'|^{d+s}}\ud y' -\frac{\omega_d}{\rho^s} \geq \frac{2\omega_{d-1}\rho }{s\rho^{s}} -\frac{\omega_d}{\rho^s}.  \end{align*} Therefore (B) is satisfied for $s\in [-1,0)$. 
\end{proof}
%%%%%%%%%%%%%%%%%%%%%%%%%%%%%%
%%%%%%%%%%%%%%%%%%%%%%%%%%%%%%
%%%%%%%%%%%%%%%%%%%%%%%%%%%%%%
Existence of global solution and uniqueness of  $s$-Riesz curvature flows for $s\in [-1,0)$ is established by the following result that is a direct consequence of Theorem \ref{exun} and of Proposition \ref{propertiesriesz}.

\begin{theorem}\label{exunsneg}
For every $s\in [-1,0)$ and for every uniformly continuous function $u_0\in C(\R^d)$ constant outside a compact set, there exists a unique viscosity solution to \eqref{levelsetf} with $\H$ replaced by $\kk^s$.
\end{theorem}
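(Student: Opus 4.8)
The plan is to invoke Theorem~\ref{exun} directly, so the real content is checking that $\kk^s$ (for $s\in[-1,0)$) meets its hypotheses: that $\kk^s$ is a nonlocal curvature satisfying (B), and — for uniqueness — also (C'). Both of these are already packaged in Proposition~\ref{propertiesriesz}, which asserts that $\kk^s$ satisfies (M), (T), (C'), (S) for all $s\in(-d,0)$ and additionally (B) for $s\in[-1,0)$. So the argument is essentially a one-line citation: given such $s$ and a uniformly continuous $u_0$ constant outside a compact set, apply the existence part of Theorem~\ref{exun} with $\H=\kk^s$ to obtain a global viscosity solution, and apply the uniqueness part, which needs (C'), to conclude the solution is unique.

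If one wants to be a little more self-contained, I would spell out the two ingredients. For existence and the containment of the support it suffices that $\kk^s$ satisfies (B), i.e. that the lower control $\uc(\rho)\ge -K\rho$ holds for $\rho\ge 1$ with some $K>0$; this is exactly the estimate carried out at the end of the proof of Proposition~\ref{propertiesriesz}, where one bounds $\kk^s(0,B_\rho(\rho e_d))$ from below by $\tfrac{2\omega_{d-1}\rho}{s\rho^{s}}-\tfrac{\omega_d}{\rho^s}$, which for $s\in[-1,0)$ is $\gtrsim -\rho$ (here $s<0$, so $\rho^{-s}\le\rho$ for $\rho\ge 1$ and $\tfrac1s<0$ makes the leading term negative and linear in $\rho$). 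For uniqueness one needs (C'), which is obtained exactly as in Proposition~\ref{properties}: the kernel $|x-y|^{-(d+s)}$ with $s>-d$ is locally integrable near the diagonal, so unlike the fractional case there is no principal-value cancellation to worry about, and the perturbation estimate under a $C^{\regu}$ diffeomorphism $\Phi$ fixing the complement of $B_1(x)$ reduces to splitting the integral into a small ball around $x$, where one uses integrability and the interior/exterior ball condition, and the remaining region, where the integrand and domain depend continuously on $\Phi$ in $C^1$.

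There is essentially no obstacle here: the statement is a corollary, and the only thing to verify is that the hypotheses of Theorem~\ref{exun} (a nonlocal curvature satisfying (B), plus (C') for uniqueness) are the ones provided by Proposition~\ref{propertiesriesz} for the range $s\in[-1,0)$. The restriction to $s\ge -1$ is precisely what makes (B) hold — for $s<-1$ the lower bound on the curvature of large balls degenerates faster than linearly, so Theorem~\ref{exun} no longer applies and global existence on $[0,+\infty)$ is not guaranteed by this argument. I would therefore write the proof as: ``By Proposition~\ref{propertiesriesz}, for $s\in[-1,0)$ the functional $\kk^s$ is a nonlocal curvature satisfying (B) and (C'). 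The claim then follows from Theorem~\ref{exun}.''
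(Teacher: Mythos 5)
Your proposal is correct and coincides with the paper's own argument: Theorem~\ref{exunsneg} is stated there as a direct consequence of Theorem~\ref{exun} together with Proposition~\ref{propertiesriesz}, which supplies (M), (T), (C'), (S) for all $s\in(-d,0)$ and (B) precisely for $s\in[-1,0)$. Your supplementary remarks on why (B) and (C') hold are not needed for this corollary (they belong to the proof of Proposition~\ref{propertiesriesz}, which you are entitled to cite), so the one-line citation you end with is exactly the intended proof.
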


\begin{remark}\upshape Note that there holds 
\[\kk^s(0,B_\rho(\rho e_d))\leq  - 2\int_{B_\rho(\rho e_d)\cap \{y_d\geq \rho\}}  \frac{1}{|y|^{d+s}}\ud y \leq -\frac{\omega_d}{5^{(d+s)/2}\rho^s}. \]
Therefore,  for $s\in (-d,-1)$, balls blow up in finite time. Nevertheless, in this case one could prove at least local in time existence of the viscosity solution.
\end{remark}
 
%%%%%%%%%%%%%%%%%%%%%%%%%%%%%%
%%%%%%%%%%%%%%%%%%%%%%%%%%%%%%
%%%%%%%%%%%%%%%%%%%%%%%%%%%%%%
\begin{theorem}\label{convergenceriesz}
Let $\{s_n\}_{n\in\N}\subset(-d,0)$ be such that $s_n\to 0^-$ as $n\to+\infty$. Let $\{E_n\}_{n\in\N}\subset\Reg$ be such that $E_n\to E$ in $\Reg$ for some $E\in\Reg$.
For every $x\in\partial E\cap \partial E_n$ it holds 
\begin{equation}\label{order1riesz}
\lim_{n\to +\infty} \kk^{s_n}(x, E_n)-\frac{d\omega_d}{s_n} = \hh^0(x, E),
\end{equation}
 where $\hh^0$ is defined in \eqref{zerocurv}. 
In particular,
\begin{equation}\label{order0riesz}
\lim_{n\to +\infty} s_n\kk^{s_n}(x,E_n)=d\omega_d.
\end{equation}
%Let $E\subseteq\Rd$ a compact set with boundary  of class $C^{1,1}$. Then 
%\[\lim_{s\to 0^+} sK_s(x,E)=-d\omega_d  \qquad \text{uniformly for $x\in\partial E$.}\]
%Moreover  
%\[\lim_{s\to 0^+} \frac{sK_s(x,E)+d\omega_{d}}{s} = H_0(x,E) \qquad \text{uniformly for $x\in\partial E$}\]
% where $H_0$ is defined in \eqref{zerocurv}. 
\end{theorem}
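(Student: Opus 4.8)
The plan is to mimic the proof of Theorem~\ref{convergence0}, the only real difference being that for $s<0$ the kernel $|y|^{-d-s}$ is locally integrable at the origin, so that $\kk^s(x,E)$ is an honest Lebesgue integral and no principal value is involved; nevertheless, passing to the limit $s_n\to 0^-$ still rests on the cancellation of the kernel inside a pair of tangent balls, because the limiting quantity $\hh^0$ is only conditionally defined. First, exactly as in the proof of Theorem~\ref{convergence0}, using property~(S) (granted by Proposition~\ref{propertiesriesz}) together with the rotational and translational invariance of $\kk^{s_n}$ and $\hh^0$, I would reduce to the case in which $E_n$ and $E$ are compact, $x=0$, and $\nu_E(0)=\nu_{E_n}(0)=e_d$.

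Set $\eta:=2\dia(E)$, so that $E,E_n\subset B_\eta$ for $n$ large. Since $d+s_n<d$ one has $\int_{B_\eta}|y|^{-d-s_n}\,\ud y=-\tfrac{d\omega_d}{s_n}\eta^{-s_n}$, hence, in analogy with \eqref{fuori},
\[
\kk^{s_n}(0,E_n)=-2\int_{E_n}\frac{1}{|y|^{d+s_n}}\,\ud y
=\int_{B_\eta}\frac{\chi_{\Rd\setminus E_n}(y)-\chi_{E_n}(y)}{|y|^{d+s_n}}\,\ud y+\frac{d\omega_d}{s_n}\eta^{-s_n}.
\]
From $E_n\to E$ in $\Reg$ I would extract $\delta>0$ with $B_\delta(-\delta e_d)\subseteq E_n$ and $B_\delta(\delta e_d)\subseteq\Rd\setminus E_n$, and analogously for $E$; the change of variables $y\mapsto -y$ shows that the contributions of these two balls to the integral cancel (this is the $s<0$, limit-free counterpart of Lemma~\ref{inou2}), so that
\[
\kk^{s_n}(0,E_n)-\frac{d\omega_d}{s_n}
=\int_{B_\eta\setminus(B_\delta(-\delta e_d)\cup B_\delta(\delta e_d))}\frac{\chi_{\Rd\setminus E_n}(y)-\chi_{E_n}(y)}{|y|^{d+s_n}}\,\ud y
+\frac{d\omega_d}{s_n}\big(\eta^{-s_n}-1\big).
\]

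Now I would let $n\to+\infty$. The last term converges to $-d\omega_d\log\eta$, since $\tfrac{\eta^{-s_n}-1}{s_n}\to-\log\eta$. For the integral I would use dominated convergence: the integrand converges a.e.\ to $(\chi_{\Rd\setminus E}(y)-\chi_E(y))/|y|^{d}$ (because $E_n=\Phi_n(E)$ with $\Phi_n\to\mathrm{Id}$ and $|\partial E|=0$, while $|y|^{-d-s_n}\to|y|^{-d}$), and, for $s_n\in[-\tfrac12,0)$, on $B_\eta\setminus(B_\delta(-\delta e_d)\cup B_\delta(\delta e_d))$ it is dominated by $|y|^{-d}\chi_{B_1}(y)+|y|^{-d+1/2}\chi_{B_\eta\setminus B_1}(y)$, which is integrable there — the first summand precisely by Lemma~\ref{inou} with $\overline s=0$ (together with $|y|^{-s_n}\le 1$ for $|y|\le 1$), the second one trivially. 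Therefore
\[
\lim_{n\to+\infty}\left(\kk^{s_n}(0,E_n)-\frac{d\omega_d}{s_n}\right)
=\int_{B_\eta\setminus(B_\delta(-\delta e_d)\cup B_\delta(\delta e_d))}\frac{\chi_{\Rd\setminus E}(y)-\chi_E(y)}{|y|^{d}}\,\ud y-d\omega_d\log\eta,
\]
and by the case $s=0$ of \eqref{uni} together with \eqref{noRtoin} (with $R=\eta>\dia(E)$) the right-hand side equals $\hh^0(0,E)$, which is \eqref{order1riesz}. Finally, \eqref{order0riesz} follows at once: $s_n\kk^{s_n}(0,E_n)=s_n\big(\kk^{s_n}(0,E_n)-\tfrac{d\omega_d}{s_n}\big)+d\omega_d\to d\omega_d$, since the bracket tends to the finite number $\hh^0(0,E)$.

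The step I expect to be delicate — as it is already in Theorem~\ref{convergence0} — is the uniform-in-$s_n$ domination near the ``pinch'' point $y=0$ on the set $B_\eta\setminus(B_\delta(-\delta e_d)\cup B_\delta(\delta e_d))$; Lemma~\ref{inou} (with $\overline s=0$) is exactly what makes this work, allowing one to exchange the (here superfluous) regularization of $\kk^{s_n}$ for the one intrinsic to $\hh^0$.
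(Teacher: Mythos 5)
Your proposal is correct and follows essentially the same route as the paper's proof: the same reduction to $x=0$ with compact sets and aligned normals, the same decomposition of $\kk^{s_n}(0,E_n)$ into the full-ball integral $\frac{d\omega_d}{s_n}\eta^{-s_n}$ plus the integral of $(\chi_{\R^d\setminus E_n}-\chi_{E_n})|y|^{-d-s_n}$ over $B_\eta$ minus the two tangent balls, and the same use of the symmetric-ball cancellation, Lemma~\ref{inou} for domination, dominated convergence, and Lemma~\ref{inou2} together with \eqref{noRtoin} to identify the limit as $\hh^0(0,E)$. If anything, your explicit remark that for $s_n<0$ one dominates $|y|^{-d-s_n}$ by $|y|^{-d}$ near the origin (so that Lemma~\ref{inou} with $\overline s=0$ applies) is slightly more careful than the paper's wording.
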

%%%%%%%%%%%%%%%%%%%%%%%%%%%%%%
%%%%%%%%%%%%%%%%%%%%%%%%%%%%%%
%%%%%%%%%%%%%%%%%%%%%%%%%%%%%%
\begin{proof} 
The proof follows along the lines of that of Theorem \ref{convergence0}; we briefly sketch it. 

Let  $x\in \partial E_n\cap \partial E$ for all $n\in\N$.
First, notice that all the curvatures we are dealing with satisfy assumption (S) and are invariant by rotations (and translations). In particular,  we can assume  without loss of generality that  $E_n$ and $E$ are compact, that $x=0$ 
 and  $\nu_E(0)= \nu_{E_n} (0) =e_d$, where we recall that for all $F\in\Reg$ and $y\in\partial F$, $\nu_F(y)$ denotes the outer  normal to $\partial F$ at $y$.

Setting $\eta:= 2 \dia (E)$,
% and $\eta_n:=\dia (\partial E_n)$ . Then 
we have $E, \, E_n \subset B_\eta$ for $n$ large enough. 
Since $ E_n \to E$ in $\Reg$,   we deduce that $E_n$ and $E$   satisfy a uniform interior and exterior ball condition. More precisely,  there exists  $\delta>0$ such that $B_\delta(-\delta e_d )\subseteq E_n$ and $B_\delta( \delta e_d)\subseteq\Rd\setminus E_n$ and the same for $E$. Due to this fact, and to the symmetry property of the kernel, we get
%Let  $x\in \partial E$ and $\eta=\dia (\partial E)$.
% Up to a rotation and a translation, we may assume that $x=0$ and  $\nu=e_d$, where $\nu$ is the outer  normal at $E$ in $0$.
%Then $E\subseteq B_\eta$ and moreover also $\gamma(E)\subseteq B_\eta$, where $\gamma:\R^d\to\R^d$ is the reflection with respect to the hyperplane $y_d=0$.   Note that $K_s(0,E)=K_s(0,\gamma(E))$, due to symmetry properties of the Riesz potential. Since $\partial E$ is compact of class $C^{1,1}$, it satisfies a uniform exterior and interior ball condition.  Let $\delta>0$ such that $B_\delta(-\delta e_d)\subseteq E$ and $B_\delta( \delta e_d)=\gamma(B_\delta(-\delta e_d))\subseteq\Rd\setminus E$.  Therefore 
 \begin{equation}\label{riesz1}
 \begin{aligned}  
 &\kk^{s_n}(0,E_n)
 = - 2\int_{E_n} \frac{1}{| y|^{d+s_n}}\ud y\\
 =& -\int_{B_\delta( -\delta e_d)\cup B_\delta( \delta e_d)}  \frac{1}{| y|^{d+s_n}}\ud y-2\int_{E_n\setminus B_\delta (-\delta e_d)}  \frac{1}{| y|^{d+s_n}}\ud y\\
=&- \int_{B_\eta} \frac{1}{|y|^{d+s_n}}\ud y 
+\int_{B_\eta\setminus (B_\delta( -\delta e_d)\cup B_\delta( \delta e_d)) } \frac{1}{|y|^{d+s_n}}\ud y-2\int_{E_n\setminus B_\delta (-\delta e_d)}  \frac{1}{| y|^{d+s_n}}\ud y\\
=&- \int_{B_\eta} \frac{1}{|y|^{d+s_n}}\ud y 
+\int_{B_\eta\setminus (B_\delta( -\delta e_d)\cup B_\delta( \delta e_d)) } \frac{\chi_{\R^d\setminus E_n}(y)-\chi_{E_n}(y)}{|y|^{d+s_n}}\ud y\\
 =&\frac{d\omega_{d}\eta^{-s_n}}{s_n}+\int_{B_\eta\setminus (B_\delta( -\delta e_d)\cup B_\delta( \delta e_d))} \frac{\chi_{\R^d\setminus E_n}(y)-\chi_{E_n}(y)}{|y|^{d+s_n}}\ud y\,.
%\\\nonumber &=&-\frac{d\omega_{d}R^s}{s}+\int_{B_R\setminus B_\delta( -\delta e_d)\cup B_\delta( \delta e_d) } \frac{\chi_{\R^d\setminus E}(y)-\chi_E(y)}{|y|^{d+s}}\ud y
\end{aligned}
 \end{equation}  %  for any $R>\eta$.
By Lemma \ref{inou} and using the $C^{\regu}$ convergence of $E_n$ to $E$ and the Dominate Convergence Theorem, we get
\begin{equation}\label{riesz2}
\begin{aligned}
&\lim_{n\to +\infty} \int_{B_\eta\setminus (B_\delta( -\delta e_d)\cup B_\delta( \delta e_d))} \frac{\chi_{\R^d\setminus E_n}(y)-\chi_{E_n}(y)}{|y|^{d+s_n}}\ud y\\
=&
\int_{B_\eta\setminus B_\delta( -\delta e_d)\cup B_\delta( \delta e_d) } \frac{\chi_{\R^d\setminus E}(y)-\chi_E(y)}{|y|^{d}}\ud
y\\
=&\lim_{r\to 0^+} \int_{B_\eta\setminus B_r } \frac{\chi_{\R^d\setminus E}(y)-\chi_E(y)}{|y|^{d}}\ud
y\,,
\end{aligned}
\end{equation}
 where the last equality follows from Lemma \ref{inou2}.
 
 By \eqref{riesz1} and \eqref{riesz2}, in view of \eqref{noRtoin}, we can conclude that \eqref{order1riesz} holds true.
\end{proof}  
%%%%%%%%%%%%%%%%%%%%%%%%%%%%%%
%%%%%%%%%%%%%%%%%%%%%%%%%%%%%%
%%%%%%%%%%%%%%%%%%%%%%%%%%%%%%
Next two results are devoted to the analysis of the $s$-Riesz curvature flow as $s\to 0$.
%%%%%%%%%%%%%%%%%%%%%%%%%%%%%%
%%%%%%%%%%%%%%%%%%%%%%%%%%%%%%
%%%%%%%%%%%%%%%%%%%%%%%%%%%%%%
\begin{theorem}\label{flowconvrieszsto0ord0}
Let $\{s_n\}_{n\in\N}\subset [-1,0)$ with $s_n\to 0$ as $n\to +\infty$. 
Let $u_0\in C(\R^d)$ be a uniformly continuous function, constant outside a compact set. 

For every $n\in\N$ let $u^n$ be the viscosity solution to \eqref{levelsetf} with $\H$ replaced by $\kk^{s_n}$, and set $v^n(x,t):= u^n(x,-s_n t)$ for all $x\in\R^d,\, t\ge 0$. 
Then, $v^n\to v^\infty$ locally uniformly where $v^\infty:\R^d\times [0,+\infty)\to\R$ is the (unique) viscosity solution to \eqref{levelsetf} with $\H$ replaced by $-d \omega_d$.
\end{theorem}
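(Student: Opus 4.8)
The plan is to apply the general convergence theorem, Theorem~\ref{genthm}, to the rescaled curvatures, in complete analogy with the proof of Theorem~\ref{flowconvsto0ord0}. For every $n\in\N$ set $\H^n:=s_n\kk^{s_n}$ (note that $s_n<0$, so $s_n\kk^{s_n}$ is indeed a rescaling of $\kk^{s_n}$ by a positive constant, since by Proposition~\ref{propertiesriesz} the $s_n$-Riesz curvature of a ball is negative and $s_n\kk^{s_n}$ will have the correct sign in the limit) and $\H^\infty:=-d\omega_d$. By Proposition~\ref{propertiesriesz} each $\kk^{s_n}$, and hence each $\H^n$, is a nonlocal curvature satisfying (M), (T), (C') and (S); the constant $-d\omega_d$ trivially satisfies (C') and (S) as well.

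The next step is to verify the uniform bound (UB) for $\{\H^n\}_{n\in\N}$ and the convergence $\H^n\to\H^\infty$ in the sense of Definition~\ref{curvconver}. For the latter, this is exactly \eqref{order0riesz} of Theorem~\ref{convergenceriesz}: for $E_n\to E$ in $\Reg$ and $x\in\partial E\cap\partial E_n$ one has $s_n\kk^{s_n}(x,E_n)\to d\omega_d$, and since $s_n<0$ this means $\H^n(x,E_n)=s_n\kk^{s_n}(x,E_n)\to d\omega_d$; but wait---one must be careful about the sign. Looking more closely, the target flow is driven by $-d\omega_d$, and $v^n(x,t)=u^n(x,-s_nt)$ with $-s_n>0$, so the rescaled curvature that governs $v^n$ is $(-s_n)\kk^{s_n}=-s_n\kk^{s_n}$, which by \eqref{order0riesz} converges to $-d\omega_d=\H^\infty$. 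So I set $\H^n:=-s_n\kk^{s_n}$. For (UB), I use the explicit scaling of the Riesz curvature: from \eqref{rieszvar} one reads off $\kk^s(\lambda x,\lambda E)=\lambda^{-s}\kk^s(x,E)$ for $\lambda>0$, hence $\H^n(\rho x,\overline B_\rho)=\rho^{-s_n}\H^n(x,\overline B_1)$ for $x\in\partial B_1$. Since $\H^n(x,\overline B_1)=-s_n\kk^{s_n}(x,\overline B_1)\to d\omega_d>0$ by \eqref{order0riesz}, the quantities $\H^n(x,\overline B_1)$ are bounded above and below by positive constants uniformly in $n$; combined with $\rho^{-s_n}\le\max(1,\rho^{-1})$ for $s_n\in[-1,0)$ (here $-s_n\in(0,1]$, so $\rho^{-s_n}=\rho^{|s_n|}\le\max(1,\rho)$ for $\rho>0$, and $\rho^{-s_n}\ge\min(1,\rho)$), one checks directly that $\oc^{\sup}(\rho)<+\infty$ for all $\rho>0$ and $\uc^{\inf}(\rho)\ge -K\rho$ for all $\rho>1$, giving (UB). Note that the hypothesis $s_n\in[-1,0)$ is precisely what is needed here so that the exponent $-s_n$ stays in $(0,1]$ and the curvatures of large balls do not grow faster than linearly.

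Finally, one checks the elementary time-rescaling fact: if $u^n$ solves \eqref{levelsetf} with $\H=\kk^{s_n}$, then $v^n(x,t)=u^n(x,-s_nt)$ solves \eqref{levelsetf} with $\H=-s_n\kk^{s_n}=\H^n$, because the level-set equation is first order in $t$ and $\H$ is multiplied by the positive constant $-s_n$ upon the change of variables $t\mapsto -s_n t$. Having verified all the hypotheses of Theorem~\ref{genthm}---the $\H^n$ satisfy (C'), (S), (UB) and converge to $\H^\infty=-d\omega_d$, which satisfies (C')---we conclude that $v^n\to v^\infty$ locally uniformly, where $v^\infty$ is the unique viscosity solution to \eqref{levelsetf} with $\H$ replaced by $-d\omega_d$. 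I do not expect any genuine obstacle: all the analytical work (the curvature convergence and the axioms) has already been done in Theorem~\ref{convergenceriesz} and Proposition~\ref{propertiesriesz}; the only points requiring a little care are the bookkeeping of signs (since $s_n<0$) and confirming that the restriction to $s_n\in[-1,0)$ is what guarantees (UB).
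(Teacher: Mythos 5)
Your proposal follows essentially the same route as the paper's proof: set $\H^n:=-s_n\kk^{s_n}$ and $\H^\infty:=-d\omega_d$, invoke Proposition~\ref{propertiesriesz} for (C') and (S), use the scaling $\kk^s(\lambda x,\lambda E)=\lambda^{-s}\kk^s(x,E)$ together with $-s_n\in(0,1]$ to verify (UB), get $\H^n\to\H^\infty$ from \eqref{order0riesz}, and conclude via Theorem~\ref{genthm}. One sign slip: $\H^n(x,\overline B_1)=-s_n\kk^{s_n}(x,\overline B_1)\to -d\omega_d<0$ (not $+d\omega_d>0$), since $\kk^{s_n}(x,\overline B_1)<0$; this does not affect the argument, as (UB) follows exactly as you indicate from the uniform boundedness of $|\H^n(x,\overline B_1)|$ and the bound $\rho^{-s_n}\le\max(1,\rho)$.
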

%%%%%%%%%%%%%%%%%%%%%%%%%%%%%%
%%%%%%%%%%%%%%%%%%%%%%%%%%%%%%
%%%%%%%%%%%%%%%%%%%%%%%%%%%%%%
\begin{proof}
For every $n\in\N$ we set $\H^n:=-s_n\kk^{s_n}$ and $\H^\infty:=-d\omega_d$.
By Proposition \ref{propertiesriesz}, $\H^n$ are nonlocal curvatures in the sense of Subsection \ref{axioms} and satisfy (C') and (S). Trivially, also $\H^\infty$ is a nonlocal curvature satisfying (C') and (S).
Moreover, from the scaling property
\begin{equation}\label{snegscaling}
\kk^{s}(\lambda x,\lambda E)=\lambda^{-s}\kk^{s}(x,E)\qquad\textrm{for all }s\in(-d,0),\,E\in\Reg,\, x\in\partial E,
\end{equation}
we deduce that 
$$
\H^n(\rho x,\overline B_\rho)=\rho^{-s_n} \H^n(x,\overline B_1)\qquad\textrm{for all }\rho>0, n\in\N,\, x\in\partial B_1\,.
$$
Since $\H^n$ is negative and $-s_n\in (0,1]$, it follows that
$$
\max\{\rho,1\}\H^n(x,\overline{B}_1)\le \H^n(\rho x,\overline{B}_\rho)\le \min\{\rho,1\}\H^n(x,\overline{B}_1)\quad\textrm{for all }\rho>0, n\in\N,\, x\in\partial B_1,
$$
which, in view of Theorem  \ref{convergenceriesz}, implies that the sequence $\{\H^n\}_{n\in\N}$ satisfies also property (UB).
Again by Theorem \ref{convergenceriesz} (in particular, by \eqref{order0riesz}), we get that $\H^n\to\H^\infty$ in the sense of Definition \ref{curvconver}. 

One can easily check that $v^n$ are viscosity solutions to \eqref{levelsetf} with $\H$ replaced by $\H^n$, so that,
by Theorem \ref{genthm} we can conclude that $v^n\to v^\infty$ locally uniformly, where $v^\infty$ is the viscosity solution to \eqref{levelsetf} with $\H$ replaced by $\H^\infty$.
\end{proof}
%%%%%%%%%%%%%%%%%%%%%%%%%%%%%%
%%%%%%%%%%%%%%%%%%%%%%%%%%%%%%
%%%%%%%%%%%%%%%%%%%%%%%%%%%%%%
\begin{theorem}\label{flowconvriezsto0ord1}
Let $\{s_n\}_{n\in\N}\subset [-1,0)$ with $s_n\to 0$ as $n\to +\infty$. 
Let $u_0\in C(\R^d)$ be a uniformly continuous function, constant outside a compact set. 

For every $n\in\N$ let $u^n$ be the viscosity solution to \eqref{levelsetf} with $\H$ replaced by $\kk^{s_n}-\frac{d\omega_d}{s_n}$.
Then, $u^n\to u^\infty$ locally uniformly where $u^\infty:\R^d\times [0,+\infty)\to\R$ is the (unique) viscosity solution to \eqref{levelsetf} with $\H$ replaced by $\hh^0$.
\end{theorem}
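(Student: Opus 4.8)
The plan is to mirror the proof of Theorem \ref{flowconvsto0ord1} line by line, replacing $\hh^{s_n}$ by $\kk^{s_n}$ and invoking Theorem \ref{convergenceriesz} in place of Theorem \ref{convergence0}. First I would set $\H^n:=\kk^{s_n}-\frac{d\omega_d}{s_n}$ and $\H^\infty:=\hh^0$, and observe that by Proposition \ref{propertiesriesz} and Proposition \ref{properties} both $\H^n$ and $\H^\infty$ are nonlocal curvatures satisfying (C') and (S): indeed adding a constant does not affect (M), (T), (C') or (S), and $\hh^0$ has already been shown to satisfy all the required axioms. So the real content is to verify the uniform bound hypothesis (UB) for the sequence $\{\H^n\}_{n\in\N}$, after which Theorem \ref{genthm} applies verbatim.

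For (UB) I would compute the curvature of balls. From the scaling relation \eqref{snegscaling} one gets, for $x\in\partial B_1$ and $\rho>0$,
\begin{equation*}
\H^n(\rho x,\overline B_\rho)=\rho^{-s_n}\kk^{s_n}(x,\overline B_1)-\frac{d\omega_d}{s_n}
=\rho^{-s_n}\H^n(x,\overline B_1)+d\omega_d\,\frac{\rho^{-s_n}-1}{s_n}.
\end{equation*}
Since $s_n\in[-1,0)$, for $\rho>1$ we have $\rho^{-s_n}=\rho^{|s_n|}\in[1,\rho]$, and by the Lagrange theorem there is $\xi_n$ between $0$ and $s_n$ with $\frac{\rho^{-s_n}-1}{s_n}=-\rho^{-\xi_n}\log\rho$; since $-\xi_n\in[0,1]$ this lies in $[-\rho\log\rho,-\log\rho]$. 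Combining these estimates with the fact that, by Theorem \ref{convergenceriesz}, $\H^n(x,\overline B_1)=\kk^{s_n}(x,\overline B_1)-\frac{d\omega_d}{s_n}\to\hh^0(x,\overline B_1)$ is a convergent (hence bounded) sequence, one obtains a constant $K\ge 0$ with $\uc^{\inf}(\rho)\ge -K\rho$ for all $\rho>1$, and similarly a bound $\oc^{\sup}(\rho)\le C_1\frac{1+|\log\rho|}{\rho}+C_2$ for all $\rho>0$ (here I use the analogous computation on the exterior balls $\R^d\setminus B_\rho$, together with property (S), which gives $\kk^{s_n}(x,\R^d\setminus B_\rho)=-\kk^{s_n}$-values on complements, so the same bounds transfer). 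This is exactly hypothesis (UB).

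Having verified (UB), and since Theorem \ref{convergenceriesz} (in particular \eqref{order1riesz}) gives precisely $\H^n(x,E_n)\to\H^\infty(x,E)$ whenever $E_n\to E$ in $\Reg$ and $x\in\partial E\cap\partial E_n$, i.e.\ $\H^n\to\H^\infty$ in the sense of Definition \ref{curvconver}, all the hypotheses of Theorem \ref{genthm} are met. Applying Theorem \ref{genthm} yields that $u^n\to u^\infty$ locally uniformly, with $u^\infty$ the unique viscosity solution to \eqref{levelsetf} with $\H$ replaced by $\hh^0$, which is the claim. I do not expect any genuine obstacle here: the proof is a routine transcription, and the only mildly delicate point is organizing the elementary ball computations so that the two one-sided bounds in (UB) come out with the stated $\rho$-dependence; this is handled exactly as in the proof of Theorem \ref{flowconvsto0ord1}, using $s_n\in[-1,0)$ in place of $s_n\in(0,1)$ to control the sign and size of $\rho^{-s_n}$.
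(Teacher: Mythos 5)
Your overall strategy is exactly the paper's: the only substantive hypothesis to check is (UB), and the rest is Theorem \ref{genthm} plus Theorem \ref{convergenceriesz}. However, your verification of the lower bound in (UB) has a genuine gap. From the Lagrange estimate with $\xi_n$ between $s_n$ and $0$ you only control $-\xi_n\in(0,1]$, so for $\rho>1$ you get
$$d\omega_d\,\frac{\rho^{-s_n}-1}{s_n}=-d\omega_d\,\rho^{-\xi_n}\log\rho\ \ge\ -d\omega_d\,\rho\log\rho\,,$$
and $-\rho\log\rho$ is \emph{not} bounded below by $-K\rho$ for any fixed $K$; the inequality $\uc^{\inf}(\rho)\ge -K\rho$ does not follow from the interval $[-\rho\log\rho,-\log\rho]$ you wrote down. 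The paper avoids this by restricting to $n$ large, so that $s_n\ge-\tfrac12$ and hence $\xi_n\in(-\tfrac12,0)$, which gives $-\rho^{-\xi_n}\log\rho\ge-\rho^{1/2}\log\rho\ge -C\rho$; the finitely many remaining indices are harmless since each $\kk^{s_n}-\frac{d\omega_d}{s_n}$ individually satisfies (B) by Proposition \ref{propertiesriesz}. Alternatively, one can note that for $\rho>1$ the map $\sigma\mapsto(\rho^{\sigma}-1)/\sigma$ is increasing on $(0,1]$, so $\frac{\rho^{-s_n}-1}{s_n}=-\frac{\rho^{|s_n|}-1}{|s_n|}\ge-(\rho-1)$ uniformly in $s_n\in[-1,0)$, which gives the clean linear bound directly. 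Either repair is short, but as written your step fails.

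A smaller point: your claimed upper bound $\oc^{\sup}(\rho)\le C_1\frac{1+|\log\rho|}{\rho}+C_2$ is transcribed from the $s\to0^+$ fractional case, where $\rho^{-s_n}\le\max(1,\rho^{-1})$. Here $s_n\in[-1,0)$, so $\rho^{-s_n}\le\max(1,\rho)$ and the correct form is of the type $C_1(\rho+|\log\rho|)+C_2$. This does not affect the argument, since (UB) only requires $\oc^{\sup}(\rho)<+\infty$ for each $\rho>0$, but the stated bound is false for large $\rho$ in general.
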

%%%%%%%%%%%%%%%%%%%%%%%%%%%%%%
%%%%%%%%%%%%%%%%%%%%%%%%%%%%%%
%%%%%%%%%%%%%%%%%%%%%%%%%%%%%%
\begin{proof}
For every $n\in\N$ we set $\H^n:=\kk^{s_n}-\frac{d\omega_d}{s_n}$ and $\H^\infty:=\hh^0$.
By Proposition \ref{propertiesriesz}, $\H^n$ and $\H^\infty$ are nonlocal curvatures in the sense of Subsection \ref{axioms} and satisfy (C') and (S). 
In view of \eqref{snegscaling}, we have
\begin{equation}\label{lu1riesz}
\H^n(\rho x,\overline B_\rho)=\rho^{-s_n}\H^n(x,\overline B_1)+d\omega_d\frac{\rho^{-s_n}-1}{s_n}\quad\textrm{for all }\rho>0,\,x\in\partial B_1.
\end{equation}
For $n$ large enough we have $s_n\ge -\frac12$, and hence, 
for $\rho>1$, by Lagrange Theorem, there exists $\xi_n\in(-\frac 12,0)$ such that
\begin{equation}\label{lu2riesz}
\frac{\rho^{-s_n}-1}{s_n}=-\rho^{-\xi_n}\log\rho \ge -\rho^{\frac 1 2}\log\rho;
\end{equation} 
therefore, by \eqref{lu1riesz}, \eqref{lu2riesz}, Theorem \ref{convergenceriesz}, and \eqref{ball}, we have that there exists a constant $K>0$ such that
\begin{equation}\label{1ubriesz}
\uc^{\inf}(\rho):=\inf_{n\in\N}\uc^n(\rho)\ge -K\rho\qquad\textrm{ for all }\rho>1\,.
\end{equation}
Moreover, again by \eqref{lu1riesz}, \eqref{lu2riesz}, and Theorem \ref{convergenceriesz}, it is easy to see that there exist two constants $C_1,C_2>0$ such that
\begin{equation}\label{2ubriesz}
\oc^{\sup}(\rho):=\sup_{n\in\N}\oc^n(\rho)
%%\le \rho^{-s_n}\H^n(x,B_1)+d\omega_d\frac{\rho^{-s_n}-1}{s_n}
\le C_1(\rho+|\log\rho|) +C_2\quad\textrm{for all }\rho>0\,.
\end{equation}
Therefore, by \eqref{1ubriesz} and \eqref{2ubriesz} we deduce that $\{\H^n\}_{n\in\N}$ satisfies also property (UB), and again by Theorem \ref{convergenceriesz} (in particular, by \eqref{order1riesz}), we get that $\H^n\to\H^\infty$ in the sense of Definition \ref{curvconver}. 
One can thus apply Theorem \ref{genthm} in order to get the claim.
\end{proof}
%%%%%%%%%%%%%%%%%%%%%%%%%%%%%%
%%%%%%%%%%%%%%%%%%%%%%%%%%%%%%
%%%%%%%%%%%%%%%%%%%%%%%%%%%%%%
%%%%%%%%%%%%%%%%%%%%%%%%%%%%%%
%%%%%%%%%%%%%%%%%%%%%%%%%%%%%%
%%%%%%%%%%%%%%%%%%%%%%%%%%%%%%
%%%%%%%%%%%%%%%%%%%%%%%%%%%%%%%%%%%%%%%%
%%%%%%%%%%%%%%%%%%%%%%%%%%%%%%%%%%%%%%%%
%%%%%%%%%%%%%%%%%%%%%%%%%%%%%%%%%%%%%%%%

 \section{The flow generated by the regularized $r$-Minkowski content} \label{secmin} 
 As a final example,  we consider the asymptotic behavior of the  flow generated by  the regularized  $r$-Minkowski content   introduced in \cite{b}  in the framework of two-phase image segmentation. This flow has been considered also in \cite{CMP0,cmp} (see also \cite{dnv}) where, in particular,  it has been proved existence and uniqueness of the corresponding level set solution. 
 
 Let $r>0$ be fixed. For every measurable set $E$, we define the \emph{$r$-Minkowsky content of $E$} as
\begin{equation}\label{defmink}
J_r(E)\ :=\ \frac{1}{2r}\int_{\R^d} \textup{osc}_{B_r(x)}(\chi_E)\ud x\,,
\end{equation}
where  %\[
$\textup{osc}_A(u)\ =\ \textup{ess} \sup_A u\,-\, \textup{ess} \inf_A u$.
%\] 
One can check (see for instance \cite{CMP0}) that $J_r(E)$ coincides with the measure of the $r$-neighborhood
of the  essential boundary of $E$ divided by $2r$; moreover, one can show that, under mild regularity assumptions on $E$, $J_r(E)$ converges (pointwise and in sense of $\Gamma$-convergence) to the standard perimeter.  In \cite{CMP0} it has been proved that \eqref{defmink} is a generalized perimeter, and the corresponding curvature, i.e., its first variation with respect to inner variations, has been introduced. 
 
Let $E\in\Reg$.
For every $x\in\partial E$, we set
\begin{equation}\label{kappaprho}
\kappa_r(x,E)\ =\ \kappa^{\ou}_r(x,E)\,+\,\kappa^{\inn}_r(x,E),
\end{equation}
where
\begin{equation*} 
\kappa^{\ou}_r(x,E)\ =\begin{cases}
  \frac{1}{2r}\det (I+r \D \nu_E(x))\  &
\textrm{ if }  \ud(x+r \nu_E(x),E)= r\,,\\   
0 & \textrm{ otherwise,}
\end{cases}
\end{equation*}
\begin{equation*}
\kappa^{\inn}_r(x,E)\ =\begin{cases}
 - \frac{1}{2 r }\det (I-r \D \nu_E(x)) &
\textrm{ if } \ud(x -r \nu_E(x),\R^d\setminus E)=r\,. \\  0 & \textrm{ otherwise.}
\end{cases}
\end{equation*}
In the above formulas, $\nu_F(x)$ denotes the outer normal unit vector to $\partial F$ at $x$ whereas $\ud(y,F)$ is the distance between the point $y$ and the set $F$.

The curvature $\kappa_r(x,E)$ is not continuous and it is the true first variation of $J_r$ only for a strict subset of  $\Reg$. 
 In order to deal with a well defined curvature for all $E\in\Reg$, in \cite{CMP0}, a regularization of $J_r$, by an averaging procedure, has been defined as follows.  Fix a function  $f:\R\to [0,+\infty)$  which is  even, smooth and nonincreasing in  $[0,+\infty)$, with support in $[-1,1]$ and define $f_r(s):= \frac{1}{2r}f(s/r)$.   Set
\begin{equation}\label{varmr}
J^f_r(E)\ :=  \int_{\R^d} f_r(\ud_E(x))\ud x = \int_0^{r} (-2s f_r'(s)) J_s(E) \ud s=  \int_0^1(-s f'(s)) J_{r s}(E) \ud s ,
\end{equation}
where   $\ud_E$ is the signed distance from $\partial E$ and the second equality is obtained by exploiting coarea formula. 
 
For every $E\in\Reg, \, x\in\partial E$, the first variation of $J^f_r(E)$ at $x$ is now well defined \cite{CMP0} and given by  
\begin{equation}\label{kappaf}
\kappa_r^f(x,E):= \int_0^1 (- s f'(s)) \left[ \kappa^{\ou}_{rs}(x,E) +\kappa^{\inn}_{rs}(x,E)\right] \ud s.
\end{equation}
 
 Existence and uniqueness of flows driven by the curvature $\kappa_r^f$  have been  established in \cite[Section 6.4]{cmp} (see also \cite{CMP0}).  We briefly recall such results in next theorem. 
 
 \begin{theorem}\label{propertiesminkowski}
Let   $f:\R\to [0,+\infty)$  be   even, smooth and nonincreasing  in  $[0,+\infty)$, with support in $[-1,1]$. 
For every  $r>0$, the functionals $\kappa^f_r $ satisfy the properties (M), (T),  (C'), (B) and (S) in Subsection \ref{axioms}.  

For every $r>0$ and for every uniformly continuous function $u_0\in C(\R^d)$ constant outside a compact set, there exists a unique viscosity solution to \eqref{levelsetf} with $\H$ replaced by $\kappa^f_r $.
\end{theorem}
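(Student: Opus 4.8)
The existence-and-uniqueness statement follows at once from Theorem~\ref{exun}, so the whole task is to verify that $\kappa_r^f$ is a nonlocal curvature satisfying (C') and (B) (and to record (S) for completeness). The plan is to reduce each axiom to a statement about the ``sharp'' building blocks $\kappa^{\ou}_{rs}$ and $\kappa^{\inn}_{rs}$ entering \eqref{kappaf}, establish it there for every $s\in(0,1]$, and then integrate against the nonnegative weight $-sf'(s)\,\ud s$; this is the strategy of \cite[Section~6.4]{cmp} and \cite{CMP0}, which I would follow.

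Properties (T) and (S) are immediate from the explicit formulas: $\D\nu_E(x)$, the signed distance $\ud_E$ and the conditions $\ud(x\pm r\nu_E(x),\cdot)=r$ are translation invariant, giving (T); and replacing $E$ by $\R^d\setminus\overset{\circ}{E}$ reverses the outer normal, so $\D\nu$ changes sign and $\kappa^{\ou}_{rs}$ and $\kappa^{\inn}_{rs}$ trade places, whence $\kappa_r^f(x,\R^d\setminus\overset{\circ}{E})=-\kappa_r^f(x,E)$, i.e.\ (S). For (B) I would simply compute on balls: for $x\in\partial B_\rho$ the exterior condition $\ud(x+rs\nu(x),\overline B_\rho)=rs$ holds for every $s$, the interior one holds iff $rs<\rho$, and the relevant determinants are $(1+rs/\rho)^{d-1}$ and $(1-rs/\rho)^{d-1}$; the factor $s$ in the weight cancels the $\tfrac1{2rs}$ prefactor, so for $\rho\ge1$ the integrand is bounded uniformly in $\rho$, giving $\uc(\rho)\ge -C_r$ and hence \eqref{lwrbdkappa} for any $K\ge C_r$.

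Monotonicity (M) I would obtain termwise in $s$. Whenever the exterior (resp.\ interior) tube condition of radius $rs$ holds at $x$ for a set, the corresponding determinant $\det(I\pm rs\,\D\nu)$ is nonnegative, since an exterior (resp.\ interior) ball of radius $rs$ at $x$ forces $1\pm rs\kappa_i\ge0$ for every principal curvature; hence $\kappa^{\ou}_{rs}(x,\cdot)\ge0$ and $\kappa^{\inn}_{rs}(x,\cdot)\le0$ wherever they are defined, and $=0$ otherwise. If now $E\subseteq F$ and $x\in\partial E\cap\partial F$, then writing $\partial E$ and $\partial F$ as graphs over the common tangent plane with the common outer normal, the inclusion gives the ordering $A_E\ge A_F$ of the second fundamental forms at $x$; moreover an exterior ball of radius $rs$ for $F$ at $x$ is automatically exterior for $E$, while an interior ball of radius $rs$ for $E$ at $x$ is automatically interior for $F$. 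On the respective good sets one has $0\le I+rs A_F\le I+rs A_E$ and $0\le I-rs A_E\le I-rs A_F$, and monotonicity of $\det$ on positive semidefinite matrices yields $\kappa^{\ou}_{rs}(x,F)\le\kappa^{\ou}_{rs}(x,E)$ and $\kappa^{\inn}_{rs}(x,F)\le\kappa^{\inn}_{rs}(x,E)$; on the complementary sets these inequalities hold trivially from the sign information above. Integrating over $s$ gives (M).

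The delicate point, and the one I expect to be the main obstacle, is (C') (plain continuity (C) being a special case). The maps $s\mapsto\kappa^{\ou}_{rs}(x,E),\ \kappa^{\inn}_{rs}(x,E)$ are bounded by $C/(rs)$ and continuous in $(x,E)$ outside a ``bad'' set of parameters $s$ at which the foot-point structure of the $rs$-neighborhood of $\partial E$ transitions; after multiplying by $-sf'(s)$ one has an integrable bound independent of the set, so the point is to show that, when $E$ is replaced by $\Phi(E)$ with $\Phi=\mathrm{Id}$ outside $B_1(x)$ and $\|\Phi-\mathrm{Id}\|_{C^{\ell,\beta}}$ small, the integrand changes by a quantity controlled by a modulus depending only on the interior/exterior ball radius $\rr$. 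Away from the bad set, $\D\nu$, $\ud_E$ and the tube conditions vary with a modulus governed by $\|\Phi-\mathrm{Id}\|_{C^{\ell,\beta}}$; the core estimate is that the measure of the set of $s$ for which $E$ and $\Phi(E)$ have different tube structure at the relevant point is bounded by $\omega_\rr(\|\Phi-\mathrm{Id}\|_{C^{\ell,\beta}})$, and it is precisely here that the quantitative interior and exterior ball conditions of radius $\rr$, together with the $C^{\ell,\beta}$ bound, supply the needed transversality. Combining these estimates and integrating produces the modulus $\omega_\rr$ required by (C'). With (M), (T), (C'), (B) — and (S) — verified, Theorem~\ref{exun} applies and delivers the stated existence and uniqueness.
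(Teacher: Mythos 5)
The paper offers no proof of this theorem: it is stated as a recollection of results established in \cite[Section 6.4]{cmp} and \cite{CMP0}, so there is no in-text argument to compare yours against. Your reconstruction follows precisely the strategy of those references (reduce each axiom to the sharp kernels $\kappa^{\ou}_{rs}$, $\kappa^{\inn}_{rs}$ and integrate against $-sf'(s)\,\ud s$); the detailed verifications of (T), (S), (B) and (M) are sound — in particular the sign and ordering argument for (M) via exterior/interior balls and monotonicity of the determinant on the positive semidefinite cone is correct, and your ball computation for (B) is right (the exponent $d-1$ you obtain is what a direct computation of $J_r(B_\rho)$ gives, so the discrepancy with the exponent $d$ in \eqref{nuovecu} is immaterial for the bound) — while for (C') you correctly isolate, but only outline, the one genuinely delicate estimate, namely controlling the measure of the set of radii $rs$ at which the tube/foot-point structure of $E$ and $\Phi(E)$ differ; this is consistent with the paper, which defers the entire proof to the cited works.
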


%%%%%%%%%%%%%%%%%%%%%%%%%%%%%%
%%%%%%%%%%%%%%%%%%%%%%%%%%%%%%
%%%%%%%%%%%%%%%%%%%%%%%%%%%%%%
 
 We now  show the convergence of $\kappa^f_{r}$ as $r\to 0$ in the sense of Definition \ref{curvconver}. 
 %%%%%%%%%%%%%%%%%%%%%%%%%%%%%%
%%%%%%%%%%%%%%%%%%%%%%%%%%%%%%
%%%%%%%%%%%%%%%%%%%%%%%%%%%%%%
\begin{theorem}\label{convergencemin}  Let   $f:\R\to [0,+\infty)$  be   even, smooth and nonincreasing  in  $[0,+\infty)$, with support in $[-1,1]$. 
Let $\{r_n\}_{n\in\N}\subset(0,1)$ be such that $r_n\to 0$ as $n\to+\infty$. Let $\{E_n\}_{n\in\N}\subset\Reg$ be such that $E_n\to E$ in $\Reg$ for some $E\in\Reg$.
Then, for every $x\in\partial E\cap \partial E_n$ it holds 
\begin{equation}\label{min1}
\lim_{n\to +\infty} \kappa^f_{r_n}(x, E_n)  =  c_f \hh^1(x,E)\,,
\end{equation} 
where   $\hh^1(x,E)$ is   the scalar mean curvature of the set $\partial E$ at $x$ in \eqref{coord},  and 
$c_f=\int_0^1 f(s)\ud s$. 
\end{theorem}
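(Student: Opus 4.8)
The plan is to reduce everything to an explicit local computation near $x$, exactly as in the proof of Theorem~\ref{convergence1}, exploiting that $\kappa^f_r$ is, by \eqref{kappaf}, an average over $s\in(0,1)$ of the one-sided curvatures $\kappa^{\ou}_{rs}$ and $\kappa^{\inn}_{rs}$, which are themselves defined through $\D\nu_E(x)$ whenever the relevant ball-fitting condition holds. First I would use translational and rotational invariance to assume $x=0$, $\nu_E(0)=\nu_{E_n}(0)=e_d$, and write $\partial E_n$, $\partial E$ near $0$ as graphs of functions $f_n, f\in C^{\ell,\beta}(B'_\delta;\R)$ with $f_n\to f$ in $C^{\ell,\beta}$, $f_n(0)=f(0)=0$, $\D f_n(0)=\D f(0)=0$; denote by $A^n_0:=\D^2 f_n(0)$ the Hessian at the origin, so $A^n_0\to A_0:=\D^2 f(0)$, and recall from \eqref{coord} that $\hh^1(0,E)=-\frac{1}{d(d-1)\omega_d}\int_{\mathbb S^{d-2}} e^t A_0 e\,\ud\mathcal H^{d-2}(e)$, with the analogous identity for $E_n$.

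**Key steps.** The heart of the matter is the following claim: for a set $F$ whose boundary near $0$ is the graph of a $C^{\ell,\beta}$ function $g$ with $g(0)=0$, $\D g(0)=0$, the one-sided curvature satisfies, as $\rho\to 0^+$,
\begin{equation*}
\kappa^{\ou}_{\rho}(0,F)+\kappa^{\inn}_{\rho}(0,F)=\hh^1(0,F)+\mathrm{o}(1),
\end{equation*}
uniformly with respect to the $C^{\ell,\beta}$-norm of $g$ in a bounded set. Indeed, for $\rho$ small the point $\rho\nu_F(0)=\rho e_d$ has distance exactly $\rho$ from $F$ (the interior tangent-ball condition of radius $\sim 1/\|A_0\|$ holds for $\rho$ small enough), so $\kappa^{\ou}_\rho(0,F)=\frac{1}{2\rho}\det(I+\rho\,\D\nu_F(0))$; expanding the determinant, $\det(I+\rho\,\D\nu_F(0))=1+\rho\,\mathrm{tr}(\D\nu_F(0))+O(\rho^2)=1+\rho\,\hh^1(0,F)+O(\rho^2)$ since the trace of the shape operator is the scalar mean curvature. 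Hence $\kappa^{\ou}_\rho(0,F)=\frac1{2\rho}+\frac12\hh^1(0,F)+O(\rho)$, and symmetrically $\kappa^{\inn}_\rho(0,F)=-\frac1{2\rho}+\frac12\hh^1(0,F)+O(\rho)$; the singular $\pm\frac1{2\rho}$ terms cancel, leaving $\hh^1(0,F)+O(\rho)$. The $O(\rho)$ remainder is controlled in terms of $\|g\|_{C^{\ell,\beta}}$, which is the point that makes the limit $E_n\to E$ harmless. I would then insert $F=E_n$, $\rho=r_n s$ into \eqref{kappaf}:
\begin{equation*}
\kappa^f_{r_n}(0,E_n)=\int_0^1(-s f'(s))\bigl[\hh^1(0,E_n)+\mathrm{o}(1)\bigr]\ud s
=\hh^1(0,E_n)\int_0^1(-s f'(s))\,\ud s+\mathrm{o}(1),
\end{equation*}
where the $\mathrm{o}(1)$ is uniform in $s\in(0,1)$ because the $C^{\ell,\beta}$-norms of the $f_n$ stay bounded (convergence in $\Reg$) and $r_n s\le r_n\to 0$. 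Finally, integration by parts gives $\int_0^1(-s f'(s))\,\ud s=\int_0^1 f(s)\,\ud s=c_f$ (using $f(1)=0$, since $f$ is supported in $[-1,1]$), and $\hh^1(0,E_n)\to\hh^1(0,E)$ by the $C^{\ell,\beta}$-convergence $f_n\to f$ and formula \eqref{coord}. This yields \eqref{min1}.

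**Main obstacle.** The delicate point is making the remainder in the one-sided expansion genuinely uniform, simultaneously in the scale parameter $\rho=r_n s\to 0$ and in $n$. One has to check that: (a) for all $n$ large and all $s\in(0,1)$, the distance conditions $\ud(r_n s\,\nu_{E_n}(0),E_n)=r_n s$ and $\ud(-r_n s\,\nu_{E_n}(0),\R^d\setminus E_n)=r_n s$ actually hold — this follows from the uniform interior/exterior ball condition guaranteed by $E_n\to E$ in $\Reg$, exactly as the uniform $\delta$ in the proof of Theorem~\ref{convergence0}; and (b) the determinant expansion's error term is $O(\rho\,\|A^n_0\|^2)$ with the implicit constant dimensional, so it vanishes as $\rho\to0$ uniformly once $\sup_n\|A^n_0\|<\infty$. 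Once (a) and (b) are in place the rest is the elementary determinant computation and the integration by parts, so I expect no further difficulty; the argument is in fact simpler than Theorem~\ref{convergence1} because there is no fractional kernel to Taylor-expand.
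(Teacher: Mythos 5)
Your proposal is correct and follows essentially the same route as the paper's proof: use the uniform interior/exterior ball condition coming from $E_n\to E$ in $\Reg$ to guarantee that the distance conditions in $\kappa^{\ou}_{r_ns}$, $\kappa^{\inn}_{r_ns}$ hold, expand the two determinants so that the singular $\pm\tfrac{1}{2\rho}$ terms cancel and $\mathrm{tr}\,\D\nu_{E_n}(x)$ survives, and then integrate in $s$ (the paper invokes dominated convergence where you argue uniformity in $s$, which amounts to the same thing). The concluding identity $\int_0^1(-sf'(s))\,\ud s=\int_0^1 f(s)\,\ud s$ also matches the paper.
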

%%%%%%%%%%%%%%%%%%%%%%%%%%%%%%
%%%%%%%%%%%%%%%%%%%%%%%%%%%%%%
%%%%%%%%%%%%%%%%%%%%%%%%%%%%%%
\begin{proof} Let $r^{\inn}$ be the maximal radius $r$ such that $ E$  satisfies the interior ball condition with radius $r$, and let  $r^{\ou}$ be defined analogously. Clearly, $r^{\inn}$ and $r^{\ou}$ are  continuous with respect to smooth inner variations. For $n$ large enough the sets $E_n$ satisfy the interior and exterior ball condition with radius %$r\leq \bar r$, where
 $\bar r:= \min \{r^{\inn}, r^{\ou}\}/2$.
%Let $\bar n$ be such that  
%$E_n$  satisfies  the internal and external ball condition for all radii $r\leq \bar r$,  and $r_n<\bar r$ for all $n\geq \bar n$. 

Fix  $s\in (0,1)$; for $n$ large enough  $r_n\le \bar r$, and hence   
 \[\kappa_{s r_n}(x,E_n) = \frac{\det (I+s r_n \D \nu_{E_n}(x))-\det (I-s r_n \D  \nu_{E_n}(x))}{2sr_n}= \textrm{tr}\D \nu_{E_n}(x)+\mathrm{o}(1).\]   
Since    $E_n\to E$  in $\Reg$,  we conclude that \[\lim_{r_n\to 0} \kappa_{s r_n}(x,E_n) =  \textrm{tr}D  \nu_{E}(x)  =  \hh^1(x,E)\,,
\]   
which, by the Dominated Convergence Theorem, implies 
\[\lim_{n\to +\infty} \kappa^f_{r_n}(x, E_n)  =  \hh^1(x,E) \int_0^1 (-s f'(s)) \ud s=\hh^1(x,E) \int_0^1 f(s) \ud s.  \]
 \end{proof} 

Finally, we conclude with the asymptotic result.
\begin{theorem}\label{flowminchio}  
Let   $f:\R\to[0,+\infty) $  be   even, smooth and nonincreasing  in  $[0,+\infty)$, with support in $[-1,1]$ and let $r_n\to 0$ as $n\to +\infty$. 
Let $u_0\in C(\R^d)$ be a uniformly continuous function, constant outside a compact set. 

For every $n\in\N$ let $u^n$ be the viscosity solution to \eqref{levelsetf} with $\H$ replaced by $\kappa^f_{r_n}$.
Then, $u^n\to u^\infty$ locally uniformly where $u^\infty:\R^d\times [0,+\infty)\to\R$ is the (unique) viscosity solution to \eqref{levelsetf} with $\H$ replaced by $c_f \hh^1$.
\end{theorem}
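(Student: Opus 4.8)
The plan is to derive the statement directly from the abstract convergence result Theorem~\ref{genthm}, applied to the sequence $\H^n:=\kappa^f_{r_n}$ with candidate limit $\H^\infty:=c_f\hh^1$. First I would record, citing Theorem~\ref{propertiesminkowski}, that each $\H^n$ is a nonlocal curvature in the sense of Subsection~\ref{axioms} satisfying (C') and (S) (it also satisfies (B), but this fact will be superseded by the uniform bound below). Next I would observe that $c_f\hh^1$ is itself a nonlocal curvature satisfying (C') and (S): this is classical, the monotonicity (M) and continuity (C), (C') of the scalar mean curvature being consequences of the fact that near a point $x$ at which $\partial E$ has a two-sided ball condition of radius $\rr$ the boundary is a graph with controlled $C^{\ell,\beta}$ norm ($\ell\ge2$), so that $\hh^1$ depends continuously, and uniformly, on $\|\Phi-\mathrm{Id}\|_{C^{\ell,\beta}}$; the symmetry (S) is immediate from $\hh^1(x,E)=-\hh^1\bigl(x,\R^d\setminus\overset{\circ}{E}\bigr)$. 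These facts are already used in \cite{cmp}.

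The only step requiring an actual computation is the uniform bound (UB) for $\{\H^n\}_{n\in\N}$. By rotational invariance of $\kappa^f_r$ it suffices to evaluate the curvature of $\overline B_\rho$ at $x=\rho e_d$, where $\nu_{\overline B_\rho}(x)=x/\rho$ and $\D\nu_{\overline B_\rho}(x)=\rho^{-1}P$, with $P$ the orthogonal projection onto $T_x\partial B_\rho$. Since a ball is convex, the outward distance condition in \eqref{kappaprho} holds for every scale $t>0$, so $\kappa^{\ou}_t(x,\overline B_\rho)=\frac{1}{2t}(1+t/\rho)^{d-1}$, while the inward condition holds precisely for $t\le\rho$, where $\kappa^{\inn}_t(x,\overline B_\rho)=-\frac{1}{2t}(1-t/\rho)^{d-1}$, and fails for $t>\rho$, where $\kappa^{\inn}_t(x,\overline B_\rho)=0$. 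In all cases $\kappa_t(x,\overline B_\rho)\ge 0$, hence $\kappa^f_r(x,\overline B_\rho)\ge 0$ for every $r>0$; combined with (S), this gives, in the notation \eqref{defbarc0}--\eqref{defbarc},
\begin{equation*}
\uc^n(\rho)=\oc^n(\rho)=\kappa^f_{r_n}(x,\overline B_\rho)\ge 0\qquad\text{for every }\rho>0,\ n\in\N,
\end{equation*}
so the lower bound in (UB) holds with $K=0$. For the upper bound I would split the integral \eqref{kappaf} according to whether $r_n s\le\rho$ or $r_n s>\rho$: on the first region $\kappa_{r_n s}(x,\overline B_\rho)\le C(d)/\rho$, while on the second (possible only when $\rho<r_n<1$) one has $\kappa_{r_n s}(x,\overline B_\rho)\le 2^{d-2}(r_n s)^{d-2}\rho^{-(d-1)}\le 2^{d-2}\rho^{-(d-1)}$; integrating against $(-s f'(s))\,\ud s$, whose total mass is $c_f$, yields $\oc^{\sup}(\rho)\le C(d,f)\bigl(\rho^{-1}+\rho^{-(d-1)}\bigr)<+\infty$ for every $\rho>0$. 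Thus (UB) is verified, and this is the only genuinely technical point of the proof, although a mild one.

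Finally, the curvature convergence $\H^n\to\H^\infty$ in the sense of Definition~\ref{curvconver} is exactly the content of Theorem~\ref{convergencemin}, and $u^n$ is by construction the viscosity solution of \eqref{levelsetf} with $\H$ replaced by $\H^n$. All hypotheses of Theorem~\ref{genthm} being in place, it yields $u^n\to u^\infty$ locally uniformly, where $u^\infty$ is the unique viscosity solution of \eqref{levelsetf} with $\H=c_f\hh^1$. Everything beyond the (UB) estimate reduces to invoking Theorems~\ref{propertiesminkowski}, \ref{convergencemin} and~\ref{genthm}; note that, in contrast with the fractional and Riesz cases of Sections~\ref{fraccurv} and~\ref{secriesz}, no time reparametrization is needed here, since $\kappa^f_{r_n}$ converges to $c_f\hh^1$ without any rescaling.
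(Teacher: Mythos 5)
Your proposal is correct and follows essentially the same route as the paper: cite Theorem~\ref{propertiesminkowski} for (C') and (S), verify (UB) by an explicit computation of $\kappa^f_{r}$ on balls (you work directly on $\overline B_\rho$ with a case split $r_ns\le\rho$ versus $r_ns>\rho$, whereas the paper first reduces to $B_1$ via the scaling identity $\kappa^f_{r}(\rho x,B_\rho)=\rho^{-1}\kappa^f_{r/\rho}(x,B_1)$ and then expands the binomial — the two computations are interchangeable), invoke Theorem~\ref{convergencemin} for the curvature convergence, and conclude by Theorem~\ref{genthm}. The only cosmetic discrepancy is the exponent in the determinant ($(1+t/\rho)^{d-1}$ in your version versus $(1+r)^{d}$ in the paper's \eqref{nuovecu}); this has no bearing on either the sign argument or the upper bound.
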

%%%%%%%%%%%%%%%%%%%%%%%%%%%%%%
%%%%%%%%%%%%%%%%%%%%%%%%%%%%%%
%%%%%%%%%%%%%%%%%%%%%%%%%%%%%%
\begin{proof}
For every $n\in\N$ we set $\H^n:=\kappa^f_{r_n}$ and $\H^\infty:=\hh^1$.
By Theorem \ref{propertiesminkowski}, $\H^n$ are nonlocal curvatures in the sense of Subsection \ref{axioms} and satisfy (C') and (S). Trivially, also $\H^\infty$ is a nonlocal curvature satisfying (C') and (S). 

To check that   
 $\{\H^n\}_{n\in\N}$ satisfies  property (UB),  we note that 
 \begin{equation}\label{scalmi}
 \kappa^f_{r_n}(\rho x, B_\rho)= \frac{1}{\rho} \kappa^f_{\frac{r_n}{\rho}}( x, B_1)\quad\textrm{for all }n\in\N, \rho>0, x\in \partial B_1.
 \end{equation} Then  we compute for $r>0$, 
 \begin{equation}\label{nuovecu}
 \kappa_{r}^{\ou}(x,B_1)=  \frac{1}{2r} \left(1+r\right)^d    \ \forall r>0\quad \textrm{
and }\quad  
\kappa_{r}^{\inn}(x,B_1)=\begin{cases} - \frac{1}{2r} \left(1-r\right)^d   &r<1 \\ 0 &  r\geq 1 \end{cases}.  
\end{equation}
It is immediate to deduce that  $\kappa_{r}(x,B_1)\geq 0$ for all $r>0$ and then also $\kappa_{r}^f(x,B_1)\geq 0$ and, in view of \eqref{scalmi}, $\H^n(\rho x,\overline{B}_\rho)\ge 0$ for all $x\in\partial B_1$, $\rho>0$.
Moreover, by \eqref{nuovecu}, it follows that
\begin{align}\label{mink2} 
\kappa_{r}^{f}(x,B_1)\leq & \int_0^{1} (-sf'(s)) \frac{\left(1+sr \right)^d}{2sr} \ud s 
=    \frac{1}{2}\sum_{k=0}^d \left(\begin{array}{c} d\\ k \end{array}\right)  r^{k-1}\int_0^1 (- f'(s) )s^k \ud s\\ \nonumber 
\leq& C^f_1+C^f_2 r^{d-1},
%\leq & \frac{f(0)}{2r} +\frac{d c_f}{2}+\frac{r^{d-1}}{2}C_{f,d} \leq  \frac{f(0)}{2} +\frac{d c_f}{2}+\frac{r^{d-1}}{2}C_{f,d} .
\end{align}
for some constants $ C^f_1,C^f_2>0$ depending on $f$.
This fact, together with \eqref{scalmi}, implies that $\H^n$ satisfies also the second property of (UB) is satisfied.

%For  $r<1$ there holds 
%\begin{align}\label{mink1} \kappa_{r}^{f}(x,B_1)= & \int_0^{1} (-sf'(s)) \frac{\left(1+sr \right)^d-(1-rs)^d}{2sr} ds
%\leq \sum_{k=1}^d \left(\begin{array}{c} d\\ k \end{array}\right)  r^{k-1}\int_0^1 (- f'(s) )s^{k} ds\\ \nonumber \leq &  d c_f +
% \sum_{k=2}^d \left(\begin{array}{c} d\\ k \end{array}\right) \int_0^1k  f(s) s^{k-1} ds = dc_f+C_{f,d}\end{align} 
%where $c_f$ is as in Theorem \ref{convergencemin} and $C_{f,d}:=  \sum_{k=2}^d \left(\begin{array}{c} d\\ k \end{array}\right) \int_0^1k  f(s) s^{k-1} ds$. For  $r\geq 1$,   there holds 
%\begin{align}\label{mink2} \kappa_{r}^{f}(x,B_1)\leq & \int_0^{1} (-sf'(s)) \frac{\left(1+sr \right)^d}{2sr} ds 
%=    \frac{1}{2}\sum_{k=0}^d \left(\begin{array}{c} d\\ k \end{array}\right)  r^{k-1}\int_0^1 (- f'(s) )s^k dx\\ \nonumber 
%\leq & \frac{f(0)}{2r} +\frac{d c_f}{2}+\frac{r^{d-1}}{2}C_{f,d} \leq  \frac{f(0)}{2} +\frac{d c_f}{2}+\frac{r^{d-1}}{2}C_{f,d} .\end{align}

% Since $r_n\to 0$, we may assume without loss of generality that $r_n\leq 1$ for all $n$,  and then by \eqref{mink1}, \eqref{mink2},
%\[\sup_n \H^n(x, B_\rho)=\frac{1}{\rho} \sup_{n} \kappa^f_{\frac{r_n}{\rho}}(\rho x, B_1)\leq 
%\frac{1}{\rho}\max \left\{dc_f+C_{f,d},   \frac{f(0)}{2} +\frac{d c_f}{2}+\frac{1}{2\rho^{d-1}}C_{f,d}\right\}.\]
%
%So, since   $\H^n(x, B_\rho)\geq 0$ for all $\rho>0$, we may conclude that   $\{\H^n\}_{n\in\N}$ satisfies  property (UB). 
Moreover,  by Theorem \ref{convergencemin} (in particular, by \eqref{min1}), we get that $\H^n\to\H^\infty$ in the sense of Definition \ref{curvconver}. 
One can thus apply Theorem \ref{genthm} in order to get the claim.
\end{proof}

%%%%%%%%%%%%%%%%%%%%%%%%%%%%%%
%%%%%%%%%%%%%%%%%%%%%%%%%%%%%%
%%%%%%%%%%%%%%%%%%%%%%%%%%%%%%%%%%%%%%%%%%%%%%%%%%%%%%%%%%%%
%%%%%%%%%%%%%%%%%%%%%%%%%%%%%%
%%%%%%%%%%%%%%%%%%%%%%%%%%%%%%%%%%%%%%%%%%%%%%%%%%%%%%%%%%%%
%%%%%%%%%%%%%%%%%%%%%%%%%%%%%%
%%%%%%%%%%%%%%%%%%%%%%%%%%%%%%%%%%%%%%%%%%%%%%%%%%%%%%%%%%%%
%%%%%%%%%%%%%%%%%%%%%%%%%%%%%%
%%%%%%%%%%%%%%%%%%%%%%%%%%%%%%


\begin{thebibliography}{99}

\bibitem{av} 
\newblock  N. Abatangelo, E. Valdinoci:
\newblock A notion of nonlocal curvature.
\newblock \emph{Numer. Funct. Anal. Optim.} {\bf 35} (2014), no. 7-9, 793-- 815. 
 
 \bibitem{adpm}
\newblock L. Ambrosio, G. De Philippis, L. Martinazzi:
\newblock Gamma-convergence of nonlocal perimeter functionals.
\newblock \emph{ Manuscripta Math. } {\bf 134} (2011), no. 3-4, 377--403. 

\bibitem{b} 
\newblock M. Barchiesi, S. H. Kang, T. M. Le, M. Morini, and M. Ponsiglione:
\newblock A variational model for infinite perimeter segmentations based on Lipschitz level set functions: denoising while keeping finely oscillatory boundaries.
\newblock \emph{ Multiscale Model. Simul.} {\bf 8} (2010), no. 5, 1715--1741.

\bibitem{c}
\newblock G. Barles, P. Cardaliaguet, O. Ley, and R. Monneau:
\newblock  Global existence results and uniqueness for dislocation equations.
\newblock {\emph  SIAM J. Math. Anal.} {\bf 40} (2008), no. 1, 44--69.


\bibitem{bn1} 
\newblock G. Bellettini, M. Novaga:
\newblock  Comparison Results between Minimal Barriers and Viscosity Solutions for Geometric Evolutions.
\newblock \emph{Ann. Scuola Norm. Sup. Pisa Cl. Sci.} {\bf 26} (1998), no. 1, 97--131. 	

%\bibitem{bp} 
%\newblock G. Bellettini, M. Paolini:
%\newblock  Some results on minimal barriers in the sense of De Giorgi applied to driven motion by mean curvature,
%\newblock \emph{ Rend. Accad. Naz. Sci. XL Mem. Mat. Appl. (5)}  19 (1995), 43--67.

\bibitem{cv} 
\newblock  L. Caffarelli, E.  Valdinoci:
\newblock Regularity properties of nonlocal minimal surfaces via limiting arguments.
\newblock \emph{ Adv. Math.} \textbf{248} (2013), 843--871. 

 
\bibitem{cdnv} 
\newblock A. Cesaroni, S. Dipierro, M. Novaga, E. Valdinoci:
\newblock  Fattening and nonfattening phenomena for planar nonlocal curvature flows.
\newblock  \emph{Math. Ann.} \textbf{375} (2019), no. 1-2. 687--736.   
 
 \bibitem{cn} 
\newblock A. Cesaroni, M. Novaga:
\newblock Second order asymptotics of the fractional perimeter as $s\to 1$.
\newblock arXiv preprint 2020, https://arxiv.org/abs/2001.11415.  
 
 \bibitem{cp} 
\newblock A. Cesaroni, V. Pagliari:
\newblock Convergence of nonlocal geometric flows to anisotropic mean curvature motion.
\newblock arXiv preprint 2018, https://arxiv.org/abs/1811.01732. 

 \bibitem{CMP0}
  \newblock  A. Chambolle, M. Morini, M. Ponsiglione:
 \newblock   A non-local mean curvature flow and its semi-implicit time-discrete approximation.
 \newblock {\it SIAM J. Math. Anal.} {\bf 44} (2012), no. 6, 4048--4077.

\bibitem{cmp}
\newblock A. Chambolle, M. Morini, M. Ponsiglione:
\newblock Nonlocal curvature flows.
\newblock \emph{Arch. Ration. Mech. Anal.} {\bf 218} (2015), no. 3, 1263--1329. 
 
 \bibitem{cnr}
 \newblock A. Chambolle, M. Novaga, B. Ruffini: 
\newblock  Some results on anisotropic fractional mean curvature flows.
 \newblock \emph{Interfaces   Free Bound.} {\bf 19} (2017), no. 3, 393--415. 
 
 \bibitem{CGG}
\newblock Y.G. Chen, Y. Giga, S. Goto:
\newblock Uniqueness and existence of viscosity solutions of generalized mean curvature flow equations.
\newblock \emph{J. DIffer. Geom.} {\bf 33} (1991), no. 3, 749--786.

 \bibitem{cw}
\newblock  H. Chen, T. Weth:
\newblock The Dirichlet problem for the logarithmic Laplacian.
\newblock \emph{Comm. Partial Differential Equations } {\bf 44} (2019), no. 11, 1100--1139. 


\bibitem{dlfm}
\newblock F. Da Lio, N. Forcadel, R. Monneau:
\newblock Convergence of a non-local eikonal equation to anisotropic mean curvature motion. Application to dislocation dynamics.
\newblock \emph{J. Eur. Math. Soc. (JEMS)} {\bf 10} (2008), no. 4, 1061--1104.

\bibitem{dg}
\newblock E. De Giorgi:
\newblock \emph{Barriers, boundaries, motion of manifolds}.
\newblock Conference held at the Department of Mathematics of Pavia, March 1994.

\bibitem{dnp}
\newblock L. De Luca, M. Novaga, M. Ponsiglione:
\newblock The $0$-fractional perimeter between fractional perimeters and Riesz potentials.
\newblock arxiv preprint 2019, https://arxiv.org/abs/1906.06303.

\bibitem{dfpv}
\newblock S.  Dipierro, A. Figalli, G. Palatucci, E. Valdinoci:
\newblock Asymptotics of the $s$-perimeter as $s\searrow 0$.
\newblock \emph{Discrete Cont. Dynam. Syst.} {\bf 33} (2013), no. 7, 2777--2790.

\bibitem{dnv}
\newblock S.  Dipierro, M. Novaga, E. Valdinoci:
\newblock On a Minkowski geometric flow in the plane: evolution of curves with lack of scale invariance.
\newblock \emph{J. Lond. Math. Soc.} {\bf 99} (2019), no. 1, 31--51.

\bibitem{dv}
\newblock S.  Dipierro, E. Valdinoci:
\newblock \emph{Nonlocal minimal surfaces: interior regularity, quantitative estimates and boundary stickiness}.
\newblock Recent developments in nonlocal theory, 165--209, De Gruyter, Berlin, 2018. 

\bibitem{ES}
\newblock L.C. Evans, J. Spruck:
\newblock Motion of level sets by mean curvature I.
\newblock \emph{J. Differ. Geom.}  {\bf 33} (1991), no. 3, 635--681.

\bibitem{i}
\newblock C. Imbert:
\newblock Level set approach for fractional mean curvature flows.
\newblock \emph{Interfaces Free Bound.} {\bf 11} (2009), no. 1, 153--176.

 \bibitem{is}
 \newblock H. Ishii,  P. Souganidis:
 \newblock Generalized motion of noncompact hypersurfaces with velocity
 having arbitrary growth on the curvature tensor.
 \newblock \emph{Tohoku Math. J. (2)} {\bf 47} (1995), no. 2, 227--250.

\bibitem{jlm}
\newblock V. Julin, D. La Manna:
\newblock Short time existence of the classical solution to the fractional Mean curvature flow.
\newblock arXiv preprint 2019, https://arxiv.org/abs/1906.10990.

\bibitem{l}
\newblock M. Ludwig:
\newblock Anisotropic fractional perimeters.
\newblock \emph{J. Differential Geom.} {\bf 96} (2014), no. 1, 77--93.

\bibitem{man}
\newblock C. Mantegazza:
\newblock \emph{Lecture notes on mean curvature flow}. 
\newblock Progress in Mathematics, 290. Birkh\"auser/Springer Basel AG, Basel, 2011.

\bibitem{ms}
\newblock V. Maz'ya, T. Shaposhnikova:
\newblock On the Bourgain, Brezis and Mironescu theorem concerning limiting embeddings of fractional Sobolev spaces.
\newblock \emph{J. Funct. Anal.} {\bf 195} (2002), no. 2, 230--238.
\newblock Erratum:  \emph{J. Funct. Anal.} {\bf 201} (2003), no. 1, 298--300.

\bibitem{OS}
\newblock S.  Osher, J.A. Sethian:
\newblock Fronts propagating with curvature-dependent speed: algorithms based on Hamilton-Jacobi formulations.
\newblock \emph{J. Comput. Phys.} {\bf 79} (1988), no. 1, 12--49.

\bibitem{v} 
\newblock  E.  Valdinoci:
\newblock A fractional framework for perimeters and phase transitions.
\newblock \emph{ Milan J. Math. }  {\bf 81} (2013), no. 1, 1--23.

\bibitem{z} 
\newblock  Y. Zhao, L. Rada, K. Chen, S.P. Harding, Y. Zheng:
\newblock Automated Vessel Segmentation Using Infinite Perimeter Active Contour Model with Hybrid Region Information with Application to Retinal Images.
\newblock \emph{IEEE Transactions on Medical Imaging}  {\bf 34} (2015), no. 9, 1797--1807.
%%%%%%%%%%%%%%%%%%%%%%%%%%%%%%%%%%%%%%%%%%%%%%%%

%%%%%%%%%%%%%%%%%%%%%%%%%%%%%%%%%%%%%%%%%%%%%%%%
\end{thebibliography}
\end{document}